\newcommand{\eps}{\epsilon}
\newcommand{\bbC}{\mathbb{C}}
\newcommand{\bbD}{\mathbb{D}}
\newcommand{\bbZ}{\mathbb{Z}}
\newcommand{\bbM}{\mathbb{M}}
\newcommand{\bbR}{\mathbb{R}}
\newcommand{\calD}{\mathcal{D}}
\newcommand{\calE}{\mathcal{E}}
\newcommand{\calM}{\mathcal{M}}
\newcommand{\calP}{\mathcal{P}}
\newcommand{\calR}{\mathcal{R}}
\newcommand{\calW}{\mathcal{W}}
\newcommand{\bfH}{\mathbf{H}}
\newcommand{\del}{\partial}
\newcommand{\tr}{\mathrm{tr}}
\newcommand{\End}{\mathrm{End}}
\newcommand{\diag}{\mathrm{diag}}
\newcommand{\laa}{\mathfrak{a}}
\newcommand{\lah}{\mathfrak{h}}
\newcommand{\lau}{\mathfrak{u}}
\newcommand{\GL}{\mathrm{GL}}
\newcommand{\SO}{\mathrm{SO}}
\newcommand{\Sp}{\mathrm{Sp}}
\newcommand{\SU}{\mathrm{SU}}
\newcommand{\rank}{\mathrm{rank}}
\renewenvironment{proof}{\noindent{\scshape Proof.}}{\qed}
\theoremstyle{plain}
\newtheorem{theorem}{Theorem}[section]
\newtheorem{lemma}[theorem]{Lemma}
\newtheorem{proposition}[theorem]{Proposition}
\newtheorem{corollary}[theorem]{Corollary}
\newtheorem{definition}[theorem]{Definition}
\theoremstyle{definition}
\newtheorem{example}[theorem]{Example}
\newtheorem{remark}[theorem]{Remark}
\title{Vector-Valued Heckman-Opdam Polynomials:\\a Steinberg variation}
\author{Maarten van Pruijssen}
\begin{document}
\begin{abstract}
We develop a theory of Jacobi polynomials for parabolic subgroups of finite reflection groups that specializes to the cases studied by Heckman and Opdam in which the whole group and the trivial group are considered. For the intermediate cases we combine results of Steinberg and Heckman and Opdam to obtain new examples of families of vector-valued orthogonal polynomials with properties similar to those of the usual Jacobi polynomials. Most notably we show that these polynomials, when suitably interpreted as vector-valued polynomials, are determined up to scaling as simultaneous eigenfunctions of a commutative algebra of differential operators. 
We establish an example in which the vector-valued Jacobi polynomials can be identified with spherical functions for a higher $K$-type on a compact symmetric pair with restricted root system of Dynkin type $A_{2}$. We also describe how to obtain new examples of matrix-valued orthogonal polynomials in several variables.
\end{abstract}
\maketitle

\tableofcontents

\newpage

\section{Introduction}\label{s:intro}

Let $\laa$ be a Euclidean space with inner product $(\cdot,\cdot)$, $R\subset\laa^{*}$ a root system with Weyl group $W$ and $k=(k_{\alpha})_{\alpha\in R}$ a multiplicity function that is $W$-invariant. Let $P\subset\laa^{*}$ denote the weight lattice and $Q^{\vee}\subset\laa^{*}$ the coroot lattice generated by the coroots $\alpha^{\vee}=2\alpha/(\alpha,\alpha)$. The group algebra $\bbC[P]$ is identified with the space of Laurent polynomials on the compact torus $A=i\laa/(2\pi iQ^{\vee})$ and carries a natural action of $W$, as follows. For $w\in W, \lambda\in P$ and $e^{\lambda}\in\bbC[P]$ we have $we^{\lambda}=e^{w\lambda}$.
Fix a a subset $R^{+}\subset R$ of positive roots. Let $\Pi\subset R^{+}$ be the set of simple roots and $S\subset W$ the corresponding set of simple reflections. Let $I\subset S$ be a subset and denote by $W_{I}$ the parabolic subgroup generated by the simple reflections in $I$.
In this paper we study the harmonic analysis of $\bbC[P]^{W_{I}}$, the space of $W_{I}$-invariant Laurent polynomials on $A$.

Let $R_{I}\subset R$ denote the set of roots corresponding to $W_{I}$ and let $R_{I}^{+}=R_{I}\cap R^{+}$ be the set of positive roots of $W_{I}$ that is compatible with the choice of $R_{+}$. Let $P^{+}\subset P$ denote the set of dominant weights with respect to $R^{+}$. 
The action of $W_{I}$ on $P$ has a unique fundamental domain $P_{I}^{+}\subset P$ that contains $P^{+}$. 
Let $W^{I}=\{v\in W\mid v(R_{I}^{+})\subset R_{+}\}$, which is a system of representatives of $W/W_{I}$ consisting of the shortest elements of each coset. Using the Steinberg weights below, we show that there is an isomorphism $W^{I}\times P_{+}\cong P^{+}_{I}$.
The space $\bbC[P]^{W_{I}}$ is a freely and finitely generated $\bbC[P]^{W}$-module and the generators $\phi_{v},v\in W^{I}$ can be made explicit. 

The space $\bbC[P]$ is equipped with the usual inner product $(\cdot,\cdot)_{k}$ (cf.~\cite[\S2]{MR1353018}) that we restrict to $\bbC[P]^{W_{I}}$. The latter space has a natural vector space basis labeled by $P_{I}^{+}$. At the same time $P_{I}^{+}$ inherits the partial ordering on $P$ introduced by Heckman (cf.~Def 2.4 of loc.cit.) that allows us to define the Jacobi polynomials $p_{I}(\lambda,k), \lambda\in P^{+}_{I}$ which constitute a vector space basis of $\bbC[P]^{W_{I}}$. In the extreme cases $W_{\emptyset}=\{e\}$ and $W_{S}=W$, our Jacobi polynomials coincide with the non-symmetric polynomials $E(\lambda,k)$ from Def.~2.6 of loc.cit.~
(introduced by Heckman) and the $W$-invariant Jacobi polynomials $P(\lambda,k),\lambda\in P_{+}$ from \cite[Def.~1.3.1]{MR1313912} respectively. 
We show that the Jacobi polynomials can also be obtained by $W_{I}$-symmetrizing the non-symmetric Jacobi polynomials $E(\lambda,k)$ with $\lambda\in P_{I}^{+}$. 

Denote by $\bfH(R_{+},k)$ Lusztig's graded Hecke algebra that acts on $\bbC[P]$ and denote by $S'(\lah)\subset\bfH(R_{+},k)$ the commutative subalgebra generated by the Cherednik operators $D_{\xi}$, where $\lah=\laa\otimes\bbC$. The algebra $S'(\lah)^{W_{I}}$ of $W_{I}$-invariant operators acts on the space $\bbC[P]^{W_{I}}$ by differential-reflection operators. 
The space $\bbC[P]^{W_{I}}$ can be identified with the space of invariant vector-valued Laurent polynomials, $\bbC[P]^{W_{I}}\cong\left(\bbC[P]\otimes\bbC[W/W_{I}]\right)^{W}$. The corresponding action of the elements in $S'(\lah)$ is now by differential operators, the coefficients being matrix-valued Laurent polynomials. Note that these considerations break down if we consider, more generally, a reflection subgroup, because the above observations rely on relations in $\bfH(R_{+},k)$ for simple reflections.
The idea to make $W$-invariant vector-valued polynomials out of $W_{I}$-invariant polynomials was already used by Steinberg \cite[L.2.4]{MR372897}. At the same time, $W$-invariant vector-valued Laurent polynomials also occur naturally when restricted spherical functions are considered, see the first application below. 

We point out two applications of this class of Jacobi polynomials . For the first application we recall that the Jacobi polynomials $p_{S}(\lambda,k)$ can be identified with the zonal spherical functions of a compact symmetric pair whose restricted root system is of type $R$, once the multiplicity $k$ is related to the dimensions of the restricted root spaces. The root system $R$ of type $A_{2}$ occurs as the restricted root system for the compact symmetric pairs
$$(\SU(3),\SO(3)),\quad(\SU(3)\times\SU(3),\diag(\SU(3)),\quad(\SU(6),\Sp(6)).$$

The spherical functions related to the defining representation of the symmetric subgroup in each case, can be identified with the Jacobi polynomials $p_{\{s\}}(\lambda,k)$, where $s\in W$ is a simple reflection. Our identification is based on the comparison of generators of $S'(\lah)^{<s>}$, viewed as differential operators acting on the vector-valued Laurent polynomials, with the differential operators found by Shimeno \cite{MR3775397}.

The second application is a link to the theory of vector- and matrix-valued orthogonal polynomials in several variables. Only in this discussion we make the distinction between Laurent polynomials in $\bbC[P]$ and genuine polynomials in $\bbC[P]^{W}$ (which is a polynomial ring) whereas usually we refer to elements in $\bbC[P]$ as polynomials.  

The isomorphism $\bbC[P]^{W_{I}}\cong \bbC[P]^{W}\otimes\bbC^{|W^{I}|}$ based on Steinberg's result gives an interpretation of the Jacobi polynomials as a family of vector-valued orthogonal polynomials. The labeling of the orthogonal polynomials suggests that we can pack them together to obtain matrix-valued orthogonal polynomials. In this way we obtain new examples families of orthogonal vector- and matrix-valued polynomials that are at the same uniquely determined (up to scaling) as simultaneous eigenfunctions of a commutative algebra of differential operators.
In rank one this interpretation is a reasonable cosmetic operation leading to Gauss' hypergeometric functions, but for higher rank it seems an unsatisfying descriptions of the Jacobi polynomials.

Historically, in the research of matrix-valued orthogonal polynomials, it has been fruitful to try to find analogs of properties of scalar-valued orthogonal polynomials. For example, for a while it has been an open question whether families of matrix-valued orthogonal polynomials exist  for which there is a second-order differential operator that is symmetric and that has the polynomials as simultaneous eigenfunctions.

The first examples that answer this question affirmatively came about using spherical functions on compact Lie groups, see \cite{MR4053617}. These examples have have been vastly generalized in \cite{PvP} using spherical varieties and their combinatorics.
In this framework there are examples of families of matrix-valued orthogonal polynomials in any number of variables and of sizes that can become arbitrarily large. However, the parameters of the polynomials from this theory are discrete, being dimensions of root spaces. Also, the commutative algebra of differential operators remains rather abstract as a subquotient of the universal enveloping algebra. 

The new class of vector- and matrix-valued orthogonal polynomials from the theory in this paper has the advantage that the multiplicity function is more flexible and that the algebra of differential operators is better understood. Many interesting questions about the properties of these new Jacobi polynomials, for example about their various normalizations and closer ties with the representation theory of compact Lie groups, remain to be investigated. 

The paper is organized as follows. In Section \ref{s:LaurentPoly} we establish a basis of standard $W_{I}$-invariant polynomials, partly based on Steinberg's observations. In Section \ref{s:JacobiPoly} we consider a partial ordering on the labels of the invariant polynomials and we apply the Gram-Schmidt process to make up a new basis, for which it is \textit{a priori} not clear that polynomials with incomparable labels are orthogonal. This issue is resolved in Section \ref{s:ortho} using differential-reflection operators. On the way we have to establish a couple of results where the partial ordering is understood relative to the parabolic subgroup $W_{I}$. In Section \ref{s:vvJacobi} we make vector-valued Laurent polynomials out of the invariant polynomials and observe that the reflection part of the differential-reflection operators translates into matrix-coefficients of the differential operators. In this section we also calculate a specific example that is not one of the extreme cases. We pick up that example in Section \ref{s:AppI} to show that the vector-valued Laurent-polynomials can be identified with the spherical functions on a specific compact Riemann symmetric pair $(U,K)$ with specific irreducible $K$-representation. In Section \ref{s:AppII} we formulate a recipe to make vector- and matrix-valued orthogonal polynomials that are determined as simultaneous eigenfunctions (up to scaling) of a commutative algebra of differential operators.

\section{Invariant Laurent polynomials}  \label{s:LaurentPoly}

We retain the notation of Section \ref{s:intro}. The space of invariant polynomials $\bbC[P]^{W_{I}}$ is naturally a module over the ring $\bbC[P]^{W}$ of $W$-invariants in $\bbC[P]$. 
Due to Steinberg \cite[Thm.2.2]{MR372897} this module structure is completely understood.
For $\alpha\in\Pi$ let $\varpi_{\alpha}\in P$ be the corresponding fundamental weight, i.e.~$(\varpi_{\alpha},\beta^{\vee})=\delta_{\alpha,\beta}$ for $\alpha,\beta\in\Pi$. For $v\in W^{I}$ we call
$$\lambda_{v}=\sum_{\alpha\in\Pi, v^{-1}\alpha<0}\varpi_{\alpha}\in P^{+}$$
the \textit{Steinberg weight} associated to $v\in W^{I}$.
Let $W_{I,v^{-1}\lambda_{v}}=\{w\in W_{I}\mid wv^{-1}\lambda_{v}=v^{-1}\lambda_{v}\}$. Given $\alpha\in R_{I}^{+}$ we have $(v^{-1}\lambda_{v},\alpha^{\vee})\ge0$ which implies that $W_{I,v^{-1}\lambda_{v}}\subset W_{I}$ is a parabolic subgroup. For each Steinberg weight we define the polynomial
\begin{eqnarray*}
\phi_{v}=\sum_{w\in W_{I}/W_{I,v^{-1}\lambda_{v}}}we^{v^{-1}\lambda_{v}}\in\bbC[P]^{W_{I}}.\label{Steinberg polynomial}
\end{eqnarray*}
\begin{theorem}[Steinberg]
The space $\bbC[P]^{W_{I}}$ of invariant Laurent polynomials is a free $\bbC[P]^{W}$-module generated by the elements $\phi_{v}$ with $v\in W^{I}$. 
\end{theorem}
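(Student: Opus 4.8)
The plan is to prove this by reducing to the classical Steinberg theorem for the full weight lattice. The key structural fact, which I would establish first, is the claimed isomorphism $W^{I}\times P^{+}\cong P^{+}_{I}$ at the level of fundamental domains: every $W_I$-orbit in $P$ meets $P_I^+$ in exactly one point, and I want to decompose such a dominant-for-$W_I$ weight $\mu$ uniquely as $\mu = v^{-1}\lambda_v + v^{-1}\nu$ where $v\in W^I$ and $\nu\in P^+$. Here the Steinberg weight $\lambda_v$ records precisely which walls the coset representative $v$ crosses. The motivation is that $\bbC[P]^{W_I}$ has the monomial-orbit-sum basis indexed by $P_I^+$, and I need to match this, as a $\bbC[P]^W$-module, against the free module on the $\phi_v$.

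First I would recall the classical Steinberg result in the special case $I=\emptyset$ (or more precisely the statement for $\bbC[P]^{W}$ acting on $\bbC[P]$): the full group algebra $\bbC[P]$ is a free $\bbC[P]^W$-module of rank $|W|$, with a basis given by the images $e^{\lambda_v}$ (or the associated orbit-type elements) as $v$ ranges over $W$. I would then filter $\bbC[P]^{W_I}$ by the $W$-invariant height/degree grading coming from $P^+$, i.e.\ consider the $\bbC[P]^W$-submodules spanned by orbit sums $m_\mu = \sum_{w\in W_I/\mathrm{Stab}}w e^{\mu}$ whose highest $W$-dominant component is bounded. The goal is to show the $\phi_v$ generate freely by exhibiting an explicit unitriangular change of basis between $\{\phi_v \cdot P(\nu)\}_{v\in W^I,\,\nu\in P^+}$ and the orbit-sum basis $\{m_\mu\}_{\mu\in P_I^+}$, where $P(\nu)$ denotes the $W$-orbit sum (or monomial symmetric function) of $\nu$ in $\bbC[P]^W$.

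The central computation is to show that the leading term of the product $\phi_v \cdot P(\nu)$, in the dominance order restricted to $P_I^+$, is exactly $m_{v^{-1}\lambda_v + v^{-1}\nu}=m_\mu$. Because $v^{-1}\lambda_v$ is the $W_I$-dominant representative of the $W$-orbit through $\lambda_v$ and $v^{-1}\nu$ is $W_I$-dominant (as $v\in W^I$ means $v^{-1}$ sends $R_I^+$ into $R^+$, keeping pairings nonnegative), the top weight of the product is $W_I$-dominant and equals $\mu$; all other terms in the product are strictly lower in the partial order. The bijection $\mu\leftrightarrow(v,\nu)$ then guarantees that the transition matrix is triangular with invertible (indeed unit, after normalizing orbit-sum leading coefficients) diagonal, hence the $\{\phi_v\cdot P(\nu)\}$ form a $\bbC$-basis of $\bbC[P]^{W_I}$, which is precisely the statement that $\bbC[P]^{W_I}$ is free over $\bbC[P]^W$ on the $\phi_v$.

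The main obstacle I anticipate is the bookkeeping in the uniqueness and dominance claims: verifying that each $W_I$-dominant weight $\mu\in P_I^+$ decomposes uniquely as $v^{-1}\lambda_v+v^{-1}\nu$, and that no cross terms in $\phi_v\cdot P(\nu)$ accidentally produce a $W_I$-dominant weight that is larger than or incomparable to $\mu$ in a way that would break triangularity. Controlling these requires care with the two partial orders (the global dominance order on $P$ and its restriction to the fundamental domain $P_I^+$), together with the fact from Steinberg that the $\lambda_v$ are exactly the ``thresholds'' one must add to move between adjacent chambers. Once the fundamental-domain decomposition $W^I\times P^+\cong P_I^+$ and the leading-term identity are in hand, freeness and the generating property follow formally, so I would budget most of the effort for these two combinatorial lemmas.
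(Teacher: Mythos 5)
First, a caveat about the comparison: the paper does not prove this theorem at all --- it quotes it from Steinberg's paper (Thm.~2.2 there) and only proves the accompanying combinatorial fact that $f_{I}\colon W^{I}\times P^{+}\to P^{+}_{I}$, $(v,\sigma)\mapsto v^{-1}(\lambda_{v}+\sigma)$, is a bijection (Proposition \ref{PplusSteinberg}). Your first lemma is therefore exactly the ingredient the paper does establish, and your overall strategy (the bijection plus a triangular change of basis from $\{\phi_{v}\cdot m_{S}(\nu)\}$ to the orbit-sum basis $\{m_{I}(\mu)\}$) is the standard route and can be made to work. But as written it has a genuine gap in the central step.

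The claim that the top weight of $\phi_{v}\cdot m_{S}(\nu)$ is $m_{I}(f_{I}(v,\nu))$ and that ``all other terms in the product are strictly lower in the partial order'' is false for either order you could plausibly mean. In type $A_{2}$ with $I=\{s_{2}\}$, $v=s_{1}$ (so $\lambda_{s_{1}}=\varpi_{1}$ and $\phi_{s_{1}}=e^{-\varpi_{1}+\varpi_{2}}+e^{-\varpi_{2}}$) and $\nu=\varpi_{2}$, one computes
$$\phi_{s_{1}}\cdot m_{S}(\varpi_{2})=m_{I}(-\varpi_{1}+2\varpi_{2})+m_{I}(-2\varpi_{1}+\varpi_{2})+2\,m_{I}(0),$$
where $f_{I}(s_{1},\varpi_{2})=-\varpi_{1}+2\varpi_{2}$ but the extra label $-2\varpi_{1}+\varpi_{2}=f_{I}(s_{2}s_{1},\varpi_{1})$ has the \emph{same} $W$-dominant representative $\varpi_{1}+\varpi_{2}$, so the transition matrix is only block-triangular for the order induced by $\mu\mapsto\mu_{+}$. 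Using instead the raw order $\preceq$ on $P$ is worse: $\phi_{s_{1}}\cdot m_{S}(\varpi_{1})$ contains $m_{I}(\varpi_{2})$ with $\varpi_{2}\succ f_{I}(s_{1},\varpi_{1})=-2\varpi_{1}+2\varpi_{2}$, a term strictly \emph{above} the alleged leading term. The missing idea is precisely the refinement the paper introduces later for other purposes (the order $\le_{\emptyset}$): compare $\mu_{+}$ first and then the length, or Bruhat position, of the shortest element $\overline{v}(\mu)$ carrying $\mu_{+}$ to $\mu$. One must then prove that every same-level cross term $\mu'$ of $\phi_{v}\cdot m_{S}(\nu)$ satisfies $\ell(\overline{v}(\mu'))>\ell(v^{-1})$; this does hold, because all top-level exponents lie in $W_{I}v^{-1}W_{\lambda_{v}}(\lambda_{v}+\nu)$ and $v^{-1}$ is the unique minimal-length element of the double coset $W_{I}v^{-1}W_{\lambda_{v}}$, but none of this argument appears in your proposal, and it is the heart of the proof rather than bookkeeping. (Alternatively, one can prove only generation this way and obtain freeness from Steinberg's determinant formula $\det\Phi_{I}=\prod_{\alpha>0}(e^{\alpha/2}-e^{-\alpha/2})^{n_{\alpha}}$, which the paper quotes in Section \ref{s:AppII}.)
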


Let $P_{I}^{+}=\{\lambda\in P\mid (\lambda,\alpha)\ge0\mbox{ for all $\alpha\in R_{I}^{+}$}\}$. Then $P_{I}^{+}$ is a strict fundamental domain for the action of $W_{I}$ on $P$, i.e.~$W_{I}\cdot P_{I}^{+}=P$ and for $\lambda\in P_{I}^{+}$ we have $W_{I}\cdot\lambda\cap P_{I}^{+}=\{\lambda\}$. Moreover, $P_{I}^{+}$ is the unique strict fundamental domain that contains $P^{+}$. For $\lambda\in P$ we denote by $\lambda_{+}$ the unique element in $W\cdot\lambda\cap P^{+}$.

If $\alpha\in R_{I}^{+}$ and $(v,\sigma)\in W^{I}\times P^{+}$, then $(v^{-1}(\lambda_{v}+\sigma),\alpha)=(\lambda_{v}+\sigma,v\alpha)\ge0$ because $v\alpha\in R^{+}$. This defines a map
$f_{I}:W^{I}\times P^{+}\to P_{I}^{+}, f_{I}(v,\sigma)=v^{-1}(\lambda_{v}+\sigma)$.

\begin{proposition}\label{PplusSteinberg}
The map $f_{I}$ is a bijection.
\end{proposition}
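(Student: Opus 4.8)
The plan is to construct the inverse of $f_I$ explicitly and to characterize the preimage of a given $\mu\in P_I^+$ by a minimal-length condition. The guiding observation is that if $f_I(v,\sigma)=\mu$, then $v\mu=\lambda_v+\sigma$ is a sum of two dominant weights, hence dominant; since $v\mu\in W\mu$ this forces $v\mu=\mu_+$. So every preimage must satisfy $v\in\{w\in W\mid w\mu\in P^+\}$ and $\sigma=\mu_+-\lambda_v$, and the real content is to pin down the correct $v$ inside this set and to verify that $\sigma$ is dominant. Throughout I would use three standard facts: (i) for dominant $\tau$ the stabilizer $W_\tau$ is the standard parabolic generated by $\{s_\beta\mid\beta\in\Pi,\ (\tau,\beta^\vee)=0\}$; (ii) the set $\{w\in W\mid w\mu=\mu_+\}$ is a single left coset of $W_{\mu_+}$, and its minimal-length element $v$ is the unique one with $v^{-1}\beta\in R^+$ for every $\beta\in\Pi$ with $(\mu_+,\beta^\vee)=0$; and (iii) for dominant $\tau$ and any $\nu$ in the orbit $W\tau$ one has $\tau-\nu\in\sum_{\alpha\in\Pi}\bbZ_{\ge0}\alpha$. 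I also record from the definition of the Steinberg weight that $(\lambda_v,\beta^\vee)$ equals $1$ when $v^{-1}\beta<0$ and $0$ otherwise.

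For surjectivity, fix $\mu\in P_I^+$, put $\tau=\mu_+$, let $v$ be the minimal-length element of $\{w\in W\mid w\mu=\tau\}$, and set $\sigma=\tau-\lambda_v$. I would first check $v\in W^I$: if some simple root $\alpha$ of $R_I^+$ had $v\alpha<0$, then $vs_\alpha$ is shorter than $v$, and from $vs_\alpha\mu=\tau-(\mu,\alpha^\vee)\,v\alpha$ together with (iii) applied to $\nu=vs_\alpha\mu$ we get $(\mu,\alpha^\vee)\,v\alpha\in\sum_{\alpha\in\Pi}\bbZ_{\ge0}\alpha$; since $v\alpha$ is a negative root while $(\mu,\alpha^\vee)\ge0$, this forces $(\mu,\alpha^\vee)=0$, so $s_\alpha$ fixes $\mu$ and $vs_\alpha$ lies in the same coset while being shorter, contradicting minimality. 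Hence $v(R_I^+)\subset R^+$, i.e.\ $v\in W^I$. Next I would check $\sigma\in P^+$ against each $\beta\in\Pi$: if $v^{-1}\beta>0$ then $(\lambda_v,\beta^\vee)=0$ and $(\sigma,\beta^\vee)=(\tau,\beta^\vee)\ge0$; if $v^{-1}\beta<0$ then by (ii) $(\tau,\beta^\vee)>0$, so $(\tau,\beta^\vee)\ge1=(\lambda_v,\beta^\vee)$ and again $(\sigma,\beta^\vee)\ge0$. Finally $f_I(v,\sigma)=v^{-1}(\lambda_v+\sigma)=v^{-1}\tau=\mu$.

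For injectivity, suppose $f_I(v,\sigma)=f_I(v',\sigma')=\mu$. As noted, $\lambda_v+\sigma$ and $\lambda_{v'}+\sigma'$ are both the dominant representative $\tau=\mu_+$, so $v,v'\in\{w\mid w\mu=\tau\}$. The condition $\sigma\in P^+$ forces $v$ to be the minimal-length element of this coset: otherwise, by (ii) there is $\beta\in\Pi$ with $(\tau,\beta^\vee)=0$ and $v^{-1}\beta<0$, whence $(\sigma,\beta^\vee)=(\tau,\beta^\vee)-(\lambda_v,\beta^\vee)=-1<0$, a contradiction. The same applies to $v'$; since the minimal-length element of a coset is unique, $v=v'$, and then $\lambda_v=\lambda_{v'}$ gives $\sigma=\sigma'$.

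The main obstacle is that the naive guess — take the unique $v\in W^I$ with $v\mu$ dominant — fails, because $W^I\cap\{w\mid w\mu=\tau\}$ can contain several elements (already for $R$ of type $A_2$ with $\mu$ on a wall not belonging to $R_I$). The purpose of the Steinberg weight $\lambda_v$ is precisely to select, via the requirement $\sigma=\tau-\lambda_v\in P^+$, the minimal-length representative among these; reconciling the three chamber conditions ``$v\in W^I$'', ``$v$ minimal in its $W_\tau$-coset'', and ``$\sigma$ dominant'' is where the orbit-maximality fact (iii) does the real work.
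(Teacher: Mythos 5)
Your proof is correct and follows essentially the same route as the paper: both identify the preimage of $\mu\in P_I^+$ as the pair $(v,\mu_+-\lambda_v)$ with $v$ the minimal-length element sending $\mu$ to $\mu_+$, and both exploit that $(\lambda_v,\beta^\vee)\in\{0,1\}$ according to the sign of $v^{-1}\beta$. The only real difference is organizational: the paper establishes that $f_{\emptyset}$ is a bijection (proving injectivity by a length-additivity argument in $W^{\lambda_u}W_{\lambda_u}$) and then restricts to $W^I\times P^+$, whereas you work with $f_I$ directly and deduce minimality of $v$ from $\sigma\in P^+$ via the computation $(\sigma,\beta^\vee)=-1$ — both arguments rest on the same standard facts about minimal coset representatives of stabilizers of dominant weights.
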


\begin{proof}
We first show that $f_{\emptyset}$ is a bijection.
The sets $v^{-1}(\lambda_{v}+P^{+})$ are pairwise disjoint for $v\in W$. Indeed, suppose that $v^{-1}(\lambda_{v}+\sigma)=u^{-1}(\lambda_{u}+\tau)$. Then $\lambda_{v}+\sigma=\lambda_{u}+\tau$ and $uv^{-1}$ stabilizes this element. Then $uv^{-1}\in W_{\lambda_{v}+\sigma}=W_{\lambda_{u}+\tau}\subset W_{\lambda_{u}}$. So $uv^{-1}$ is in the group generated by reflections in simple roots that are perpendicular to $\lambda_{u}$. These are the simple roots that are made negative by $u^{-1}$. Hence $u^{-1}\in W^{\lambda_{u}}$ and we have $\ell(u^{-1})+\ell(uv^{-1})=\ell(v^{-1})$. The roles of $u$ and $v$ may be reversed to obtain also $\ell(v^{-1})+\ell(vu^{-1})=\ell(u^{-1})$ from which $\ell(uv^{-1})=0$ whence $u=v$ and $\sigma=\tau$.

For $\lambda\in P$ we denote by $\overline{v}(\lambda)$ the shortest element in $W$ that sends $\lambda_{+}$ to $\lambda$. Note that $\overline{v}(\lambda)\in W^{\lambda_{+}}$.
Let $v=\overline{v}(\lambda)^{-1}$.
We claim that there exists $\sigma\in P^{+}$ such that $\lambda=v^{-1}(\lambda_{v}+\sigma)$. This is equivalent to $v\lambda-\lambda_{v}\in P^{+}$ and in turn to $(v\lambda,\alpha^{\vee})>0$ for all $\alpha\in\Pi$ for which $v^{-1}\alpha<0$. Suppose that $(v\lambda,\alpha^{\vee})=0$ for some $\alpha\in\Pi$ with $v^{-1}\alpha<0$. This implies $s_{\alpha}\in W_{\lambda_{+}}$ and $\ell(\overline{v}(\lambda)s_{\alpha})<\ell(\overline{v}(\lambda))$ and in turn that $\overline{v}(\lambda)s_{\alpha}$ is a coset representative of $\overline{v}(\lambda)W_{\lambda_{+}}$ whose length is smaller than that of $\overline{v}(\lambda)$, a contradiction to $\overline{v}(\lambda)\in W^{\lambda_{+}}$.

This shows that $f_{\emptyset}:W\times P^{+}\to P$ is a bijection. Since $f_{I}=f_{\emptyset}|_{W^{I}\times P^{+}}$, the map $f_{I}$ is injective.

If $\lambda\in P_{I}^{+}$, then $\lambda=\overline{v}(\lambda)(\lambda_{\overline{v}(\lambda)^{-1}}+\sigma)$ for $\overline{v}(\lambda)\in W^{\lambda_{+}}$.
We have to show that $\overline{v}(\lambda)^{-1}\in W^{I}$. Note that $\overline{v}(\lambda)^{-1}\alpha<0$ precisely if $(\lambda,\alpha)<0$ (by \cite[(2.7.2)(ii)]{MR1976581}), so for $\alpha\in R^{+}_{I}$ we have $(\lambda,\alpha)\ge0$ and in turn $\overline{v}(\lambda)^{-1}\alpha>0$.
\end{proof}

\begin{example}\label{exampleGenerators}
To illustrate Steinberg's result, we work out some of the details for the root system of type $A_{2}$ with $W$ being generated by the simple reflections $s_{1},s_{2}$. We consider the subsets $I=\emptyset,\{s_{2}\}, S$.
\begin{itemize}
\item If $I=\emptyset$ then $W_{\emptyset}=\{e\}, W^{\emptyset}=W=S_{3}$ and $P$ can be written as a disjoint union of six copies of $P^{+}$. Each copy is of the form $v^{-1}(\lambda_{v}+P^{+})$, $v\in W$.
\item If $I=\{s_{2}\}$ then $W_ {I}=<s_{2}>$ and $W^{I}=\{e,s_{1},s_{2}s_{1}\}$ and the fundamental domain of the action of $<s_{2}>$ on $P$ is the union of disjoint sets $v^{-1}(\lambda_{v}+P^{+})$, $v\in W^{I}$.
\item If $I=S$ then $W_{S}=W$ and $W^{S}=\{e\}$ the fundamental domain of the action of $W$ on $P$ is $P^{+}$.
\end{itemize}
We have depicted the relevant data in Figure \ref{figure: A2example}. If $I=\{s_{2}\}$, then the $\bbC[P]^{W}$-module $\bbC[P]^{W_{I}}$ is generated by the polynomials $\phi_{e}=1,\phi_{s_{1}}=e^{-\varpi_{1}+\varpi_{2}}+e^{-\varpi_{2}}$ and $\phi_{s_{2}s_{1}}=e^{-\varpi_{1}}$.
\end{example}

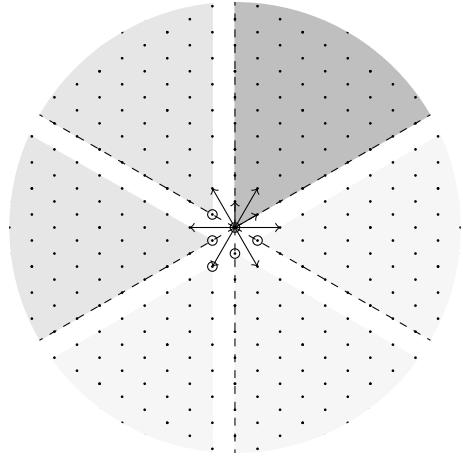
\begin{figure}
\begin{minipage}{.5\textwidth}
\begin{tabular}{ |l|l|l|c|c|c| }
\hline
$v$ & $\lambda_{v}$ & $v^{-1}\lambda_{v}$ & $\emptyset$ &  $\{s_{2}\}$ & $S$\\
\hline \hline
$e$ & $0$ & $0$ & $*$ & $*$ & $*$\\ \hline
$s_{1}$ & $\varpi_{1}$ & $-\varpi_{1}+\varpi_{2}$&$*$& $*$ & \\ \hline
$s_{2}$ & $\varpi_{2}$ & $\varpi_{1}-\varpi_{2}$ &$*$&  & \\\hline
$s_{2}s_{1}$ & $\varpi_{2}$ & $-\varpi_{1}$ &$*$& $*$ & \\\hline
$s_{1}s_{2}$ & $\varpi_{1}$ & $-\varpi_{2}$ &$*$&  & \\\hline
$s_{1}s_{2}s_{1}$ & $\varpi_{1}+\varpi_{2}$ & $-\varpi_{1}-\varpi_{2}$&$*$&  &\\
\hline 

\end{tabular}
\end{minipage}
\begin{minipage}{0.4\textwidth}

\begin{tikzpicture}[scale=.6]
\pgfmathsetmacro\ax{1}
\pgfmathsetmacro\ay{0}
\pgfmathsetmacro\bx{1 * cos(120)}
\pgfmathsetmacro\by{1 * sin(120)}
\pgfmathsetmacro\lax{2*\ax/3 + \bx/3}
\pgfmathsetmacro\lay{2*\ay/3 + \by/3}
\pgfmathsetmacro\lbx{\ax/3 + 2*\bx/3}
\pgfmathsetmacro\lby{\ay/3 + 2*\by/3}


\begin{scope}

\clip (0,0) circle (5);

\draw [fill,lightgray] (0,0) -- (10*\lax,10*\lay) -- (10*\lbx,10*\lby) -- cycle;

\draw [fill,lightgray!40!] (-1*\lax+\lbx,-1*\lay+\lby) -- (-1*\lax+11*\lbx,-1*\lay+11*\lby) -- (-11*\lax+11*\lbx,-11*\lay+11*\lby) --cycle;

\draw [fill,lightgray!40!] (-\lax,-\lay) -- (-11*\lax,-11*\lay) -- (-11*\lax+10*\lbx,-11*\lay+10*\lby) -- cycle;

\draw [fill,lightgray!15!] (-\lax-\lbx,-\lay-\lby) -- (-\lax-11*\lbx,-\lay-11*\lby) -- (-11*\lax-\lbx,-11*\lay-\lby) -- cycle;

\draw [fill,lightgray!15!] (-\lbx,-\lby) -- (-11*\lbx,-11*\lby) -- (-11*\lbx+10*\lax,-11*\lby+10*\lay) -- cycle;

\draw [fill,lightgray!15!] (\lax-\lbx,\lay-\lby) -- (11*\lax-\lbx,11*\lay-\lby) -- (11*\lax-11*\lbx,11*\lay-11*\lby) -- cycle;

\foreach \k in {1,...,6} 
  \draw[dashed] (0,0) -- (\k * 60 + 30:12);

\draw[thin,->] (0,0) -- (\ax,\ay) ;
\draw[thin,->] (0,0) -- (\bx,\by) ;
\draw[thin,->] (0,0) -- (\lax,\lay);
\draw[thin,->] (0,0) -- (\lbx,\lby);
\draw[thin,->] (0,0) -- (-\ax,-\ay);
\draw[thin,->] (0,0) -- (-\bx,-\by);
\draw[thin,->] (0,0) -- (\ax+\bx,\ay+\by);
\draw[thin,->] (0,0) -- (-\ax-\bx,-\ay-\by);

\draw[] (0,0) circle (3pt);
\draw[] (-1*\lax+\lbx,-1*\lay+\lby) circle (3pt);
\draw[] (-\lax,-\lay) circle (3pt);
\draw[] (-\lax-\lbx,-\lay-\lby) circle (3pt);
\draw[] (-\lbx,-\lby) circle (3pt);
\draw[] (\lax-\lbx,\lay-\lby) circle (3pt);

\foreach \v in {-12,-9,...,10}
\foreach \w in {-12,-9,...,10}
   \draw[fill] 
(\w*\lbx+\v*\lax,\w*\lby+\v*\lay) circle (0.5pt);

\foreach \v in {-10,-9,...,10}
\foreach \w in {-10,-9,...,10}
   \draw[fill] 
(\w*\lbx+\v*\lax,\w*\lby+\v*\lay) circle (0.5pt);

\end{scope}

\end{tikzpicture}

\end{minipage}

\caption{For $A_{2}$ we depict the sets $v^{-1}(\lambda_{v}+P^{+})$ for $v\in W^{I}$. The union of these sets constitute a fundamental domain for the action of $W_{I}$ on $P$. The relevant elements are tabulated on the left while the copies of $P^{+}$ are depicted in different gray tones to indicate to which fundamental domain they contribute for the cases $I=\emptyset,\{s_{2}\},S$ (from light to dark). The $*$ in the last three columns indicates which elements are in $W^{I}$.}
\label{figure: A2example}
\end{figure}

\begin{definition}
For $\lambda\in P^{+}_{I}$ we define $m_{I}(\lambda)=\sum_{\mu\in W_{I}\cdot\lambda}e^{\mu}\in\bbC[P]^{W^{I}}$, the basic $W_{I}$-invariant polynomials.
\end{definition}

\begin{proposition}
The basic invariant polynomials $m_{I}(\lambda),\lambda\in P_{I}^{+}$ constitute a vector space basis of $\bbC[P]^{W_{I}}$.
\end{proposition}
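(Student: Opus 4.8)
The plan is to identify the $m_{I}(\lambda)$ with the $W_{I}$-orbit sums in the monomial basis of $\bbC[P]$ and then to use that $P_{I}^{+}$ parametrizes the $W_{I}$-orbits. First I recall that $\bbC[P]$ has the monomial basis $\{e^{\mu}\mid\mu\in P\}$, on which $W_{I}$ acts by permuting indices via $we^{\mu}=e^{w\mu}$. For each $W_{I}$-orbit $\calO\subset P$ the orbit sum $s_{\calO}=\sum_{\mu\in\calO}e^{\mu}$ lies in $\bbC[P]^{W_{I}}$, and by definition $m_{I}(\lambda)=s_{W_{I}\cdot\lambda}$ is exactly such an orbit sum.

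Second, I would show that the orbit sums $\{s_{\calO}\}$ form a basis of $\bbC[P]^{W_{I}}$. Linear independence is immediate, since distinct orbits involve disjoint collections of monomials, so the $s_{\calO}$ have pairwise disjoint support relative to the monomial basis. For spanning, take any $f=\sum_{\mu}c_{\mu}e^{\mu}\in\bbC[P]^{W_{I}}$; invariance under each $w\in W_{I}$ forces $c_{w\mu}=c_{\mu}$, i.e.~the coefficient function is constant on $W_{I}$-orbits, whence $f=\sum_{\calO}c_{\calO}\,s_{\calO}$ where $c_{\calO}$ denotes the common value of the coefficients on $\calO$.

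Third, I would invoke the fact already established above that $P_{I}^{+}$ is a strict fundamental domain for the action of $W_{I}$ on $P$: we have $W_{I}\cdot P_{I}^{+}=P$, and $W_{I}\cdot\lambda\cap P_{I}^{+}=\{\lambda\}$ for $\lambda\in P_{I}^{+}$. Consequently every $W_{I}$-orbit meets $P_{I}^{+}$ in exactly one point, so $\lambda\mapsto W_{I}\cdot\lambda$ is a bijection from $P_{I}^{+}$ onto the set of $W_{I}$-orbits in $P$. Combining this with the previous step, $\{m_{I}(\lambda)\mid\lambda\in P_{I}^{+}\}$ coincides with the full set of orbit sums and is therefore a vector space basis of $\bbC[P]^{W_{I}}$.

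There is essentially no obstacle here; the only input beyond the elementary orbit-sum argument is the strict fundamental domain property of $P_{I}^{+}$, which is in hand. The one point I would be careful about is that $m_{I}(\lambda)$ is summed over the \emph{set} $W_{I}\cdot\lambda$ rather than over group elements $w\in W_{I}$, so that no monomial is repeated and the support-disjointness argument applies verbatim.
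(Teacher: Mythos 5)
Your proof is correct and follows essentially the same route as the paper's: both rest on the observation that the $m_{I}(\lambda)$ are exactly the $W_{I}$-orbit sums, which are linearly independent by disjointness of supports, together with the strict fundamental domain property of $P_{I}^{+}$ established just before the proposition. The only cosmetic difference is that the paper phrases the spanning step as an induction on the number of monomials in $P_{I}^{+}$ appearing in a given invariant, whereas you argue directly that invariance forces the coefficients to be constant on each orbit; these are the same argument.
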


\begin{proof}
The basic invariant polynomials are linearly independent. To see that they span $\bbC[P]^{W_{I}}$ we use induction on the number of terms $e^{\lambda}$ with $\lambda\in P^{+}_{I}$ and a non-zero coefficient in a given polynomial $m\in \bbC[P]^{W_{I}}$.
\end{proof}


\section{Jacobi polynomials}  \label{s:JacobiPoly}

Following \cite[\S2]{MR1353018} we endow $\bbC[P]$ with a Hermitean inner product $(\cdot,\cdot)_{k}$ given by 
\begin{equation}\label{eq:innerproduct}
(\phi,\psi)_{k}=\int_{T}\overline{\phi(t)}\psi(t)\delta_{k}(t)dt,\quad\phi,\psi\in\bbC[P]
\end{equation}
where $\delta_{k}=\prod_{\alpha\in R}(e^{\alpha/2}-e^{-\alpha/2})^{k_{\alpha}}$ and $k_{\alpha}\ge0$.

To obtain a basis of orthogonal Laurent polynomials we use the following partial ordering on $P_{I}^{+}$. Recall that the subset $Q^{+}$ of the root lattice $Q$ that consists of elements $\sum_{\alpha\in\Pi}n_{\alpha}\alpha,n_{\alpha}\in\bbZ_{\ge0}$ induces a partial ordering on $P$,
$$\lambda\preceq\mu\quad\mbox{if and only if}\quad \mu-\lambda\in Q^{+}.$$
Let $\lambda_{+}$ be the unique element in $W\cdot\lambda\cap P^{+}$. Recall that $\overline{v}(\lambda)\in W^{\lambda_{+}}$ is the unique element with $\overline{v}(\lambda)\lambda_{+}=\lambda$.
Given $\lambda,\mu\in P$, we say
$$\lambda\le_{\emptyset}\mu\Longleftrightarrow\left\{\begin{array}{cl}
(1)&\lambda_{+}\preceq\mu_{+},\\
(2)&\mbox{if $\lambda_{+}=\mu_{+}$ then $\overline{v}(\lambda)\le_{W}\overline{v}(\mu)$ in the Bruhat ordering $\le_{W}$ on $W$.}
\end{array}\right.$$
The the restriction of the partial ordering $\le_{\emptyset}$ to $P^{+}_{I}$ is denoted by $\le_{I}$. Note that $\le_{S}$ corresponds to $\preceq$ restricted to $P^{+}$. If we write $\mu\le_{I}\lambda$, then we implicitly assume that both $\mu,\lambda\in P^{+}_{I}$. In particular, for $\lambda\in P_{I}^{+}$ the set $\{\mu\in P_{I}^{+}\mid\mu\le_{I}\lambda\}$ is finite.

\begin{definition}
Let $p_{I}(\lambda,k)\in\bbC[P]^{W_{I}}$ be defined by
\begin{itemize}
\item $p_{I}(\lambda,k)=\sum_{\mu\le_{I}\lambda}c_{\lambda,\mu}m_{I}(\mu)$ with $c_{\lambda,\lambda}=1$,
\item $(p_{I}(\lambda,k),m_{I}(\mu))_{k}=0$ for all $\mu<_{I}\lambda$.
\end{itemize}
\end{definition}

\begin{remark}
In the extreme cases $I=\emptyset$ and $I=S$ the polynomials $p_{I}(\lambda,k)$ are known and constitute an orthogonal basis of the space of $W_{I}$-invariant polynomials.   
\begin{itemize}
\item If $I=\emptyset$ then $p_{\emptyset}(\lambda,k)=E(\lambda,k)$ see \cite[Def.~2.6]{MR1353018}.
\item If $I=S$ then $p_{S}(\lambda,k)=P(\lambda,k)$, the usual $W$-invariant Jacobi polynomials from \cite[Def.~1.3.1]{MR1313912}.
\end{itemize}
The orthogonality of the $E(\lambda,k)$ is established in \cite[Cor.~2.11]{MR1353018} and that of the $P(\lambda,k)$ in \cite[Cor.1.3.13]{MR1313912} using differential-reflection operators. The difficult part is to show that polynomials with incomparable labels are also pairwise orthogonal. We shall establish the corresponding result for the $p_{I}(\lambda,k)$ in Section \ref{s:ortho}.
\end{remark}

We proceed to write $p_{I}(\lambda,k)$ as a $W_{I}$-symmetrized sum of $E(\lambda,k)$ for which we first introduce some notation and some auxiliary results.

If $\lambda\in P^{+}_{I}$, then the stabilizer $W_{I,\lambda}$ of $\lambda$ in $W_{I}$ is a parabolic subgroup with positive roots $R_{I,\lambda}^{+}=\{\alpha\in R_{I}^{+}\mid(\lambda,\alpha)=0\}$. Furthermore, if $\lambda\in P$ then there is a unique element $\lambda_{I,+}\in P_{I}^{+}\cap W_{I}\cdot\lambda$. The element $w$ in $W_{I}$ of minimal length for which $w\lambda_{I,+}=\lambda$ is denoted by $\overline{v}_{I}(\lambda)$.

\begin{lemma}\label{lemma:decomp}
For $\lambda\in P$ we have $\overline{v}(\lambda)^{-1}=\overline{v}(\lambda_{I,+})^{-1}\overline{v}_{I}(\lambda)^{-1}$.
\end{lemma}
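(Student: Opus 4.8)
The plan is to identify both sides by using the characterisation of $\overline{v}(\lambda)$ as \emph{the unique element of $W^{\lambda_{+}}$ that sends $\lambda_{+}$ to $\lambda$} (equivalently, the minimal-length such element, which is a distinguished coset representative for $W/W_{\lambda_{+}}$). Writing $v=\overline{v}(\lambda)$, $u=\overline{v}(\lambda_{I,+})$ and $w=\overline{v}_{I}(\lambda)$, the asserted identity $v^{-1}=u^{-1}w^{-1}$ is equivalent to $v=wu$, so it suffices to prove that $wu$ both sends $\lambda_{+}$ to $\lambda$ and lies in $W^{\lambda_{+}}$; uniqueness then forces $wu=v$.

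First I would record the orbit bookkeeping. Since $\lambda_{I,+}\in W_{I}\cdot\lambda\subset W\cdot\lambda$, the two weights lie in the same $W$-orbit, so $(\lambda_{I,+})_{+}=\lambda_{+}$. Consequently $u=\overline{v}(\lambda_{I,+})\in W^{(\lambda_{I,+})_{+}}=W^{\lambda_{+}}$ with $u\lambda_{+}=\lambda_{I,+}$, while $w\in W_{I}$ satisfies $w\lambda_{I,+}=\lambda$ by definition. Hence $wu\cdot\lambda_{+}=w\lambda_{I,+}=\lambda$, which is the first of the two required properties.

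The heart of the argument is to show $wu\in W^{\lambda_{+}}$, i.e.\ that $wu\beta>0$ for every simple root $\beta$ with $(\lambda_{+},\beta)=0$ (these index the simple reflections of the stabiliser $W_{\lambda_{+}}$). Because $u\in W^{\lambda_{+}}$ we already have $u\beta>0$. I then split according to whether $u\beta$ lies in $R_{I}$. If $u\beta\in R^{+}\setminus R_{I}^{+}$, then, since every element of $W_{I}$ permutes $R^{+}\setminus R_{I}^{+}$, we get $wu\beta\in R^{+}\setminus R_{I}^{+}$, hence positive. If instead $u\beta\in R_{I}^{+}$, I compute $(\lambda_{I,+},(u\beta)^{\vee})=(u\lambda_{+},(u\beta)^{\vee})=(\lambda_{+},\beta^{\vee})=0$, so $u\beta\in R_{I,\lambda_{I,+}}^{+}$; as $w=\overline{v}_{I}(\lambda)$ is the minimal-length representative of its coset $wW_{I,\lambda_{I,+}}$ in $W_{I}$, it maps the positive roots $R_{I,\lambda_{I,+}}^{+}$ of that parabolic into $R_{I}^{+}\subset R^{+}$, so again $wu\beta>0$. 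In both cases $wu\beta>0$, whence $wu\in W^{\lambda_{+}}$.

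Combining the two properties with the uniqueness of the $W^{\lambda_{+}}$-representative sending $\lambda_{+}$ to $\lambda$ yields $wu=\overline{v}(\lambda)=v$, i.e.\ $v^{-1}=u^{-1}w^{-1}$, as claimed. The only genuinely delicate point is the case $u\beta\in R_{I}^{+}$: one must notice that $u\beta$ is not merely a positive root but a positive root of the smaller parabolic $W_{I,\lambda_{I,+}}$, so that the minimality of $\overline{v}_{I}(\lambda)$ inside $W_{I}$ can be brought to bear. The remaining ingredients are standard facts about parabolic subgroups, namely the $W_{I}$-stability of $R^{+}\setminus R_{I}^{+}$ and that a minimal coset representative keeps positive the positive roots of the relevant parabolic, together with the description of $W_{I,\lambda_{I,+}}$ recorded just before the statement.
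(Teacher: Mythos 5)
Your proof is correct, but it takes a genuinely different route from the paper's. The paper works top--down: it factors $\overline{v}(\lambda)^{-1}=w''w'$ through the parabolic decomposition $W=W^{I}W_{I}$ (with additive lengths), observes that $w'$ sends $\lambda$ to $\lambda_{I,+}$ and $w''$ sends $\lambda_{I,+}$ to $\lambda_{+}$, and then identifies $w''=\overline{v}(\lambda_{I,+})^{-1}$ and $w'=\overline{v}_{I}(\lambda)^{-1}$ by a length comparison: each factor is at least as long as the corresponding minimal element, while the total length of the product $\overline{v}(\lambda_{I,+})^{-1}\overline{v}_{I}(\lambda)^{-1}$ (which also sends $\lambda$ to $\lambda_{+}$) cannot be smaller than $\ell(\overline{v}(\lambda)^{-1})$. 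You work bottom--up: you form the candidate $\overline{v}_{I}(\lambda)\,\overline{v}(\lambda_{I,+})$ and verify directly the two properties characterizing $\overline{v}(\lambda)$, namely that it sends $\lambda_{+}$ to $\lambda$ and that it lies in $W^{\lambda_{+}}$, the latter via the case analysis on whether $u\beta$ lies outside $R_{I}$ (where $W_{I}$ preserves positivity) or inside $R_{I,\lambda_{I,+}}^{+}$ (where the minimality of $\overline{v}_{I}(\lambda)$ in its coset applies). Both arguments rest on the same standard facts about minimal coset representatives; the paper's length bookkeeping in $W=W^{I}W_{I}$ does implicitly what your root-by-root analysis does explicitly, so your version is a little longer but makes visible exactly where positivity of roots enters, and it never needs the parabolic factorization of $W$ itself. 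The one delicate point you correctly isolate --- that $u\beta\in R_{I}^{+}$ forces $u\beta\in R_{I,\lambda_{I,+}}^{+}$, so that minimality of $\overline{v}_{I}(\lambda)$ can be invoked --- is indeed where a naive version of the argument would break down.
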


\begin{proof}
Write $\overline{v}(\lambda)^{-1}=w''w'\in W^{I}W_{I}$ and note that $(w'')^{-1}\lambda_{+}\in P_{I}^{+}$. It follows that $(w'')^{-1}\lambda_{+}=w'\lambda=\lambda_{I,+}\in P_{I}^{+}$. At the same time $\overline{v}(\lambda_{I,+})^{-1}\overline{v}_{I}(\lambda)^{-1}\in W^{I}W_{I}$ is an element that sends $\lambda$ to $\lambda_{+}$, hence $\ell(\overline{v}(\lambda_{I,+})^{-1}\overline{v}_{I}(\lambda)^{-1})=\ell(\overline{v}(\lambda_{I,+})^{-1})+\ell(\overline{v}_{I}(\lambda)^{-1})\ge \ell((w'')^{-1})+\ell((w')^{-1})$. If the inequality is strict, then $\ell((w'')^{-1})<\ell(\overline{v}(\lambda_{I,+})^{-1})$ or $\ell((w')^{-1})<\ell(\overline{v}_{I}(\lambda)^{-1})$. Both inequalities contradict the minimality of the lengths of the elements $\overline{v}(\lambda_{I,+})$ and $\overline{v}_{I}(\lambda)$, so we have $w''=\overline{v}(\lambda_{I,+})^{-1}$ and $w'=\overline{v}_{I}(\lambda)^{-1}$.
\end{proof}

Let $P^{-}=-P^{+}$ and similarly $P^{-}_{I}=-P^{+}_{I}$. For $\lambda\in P$ we denote by $v(\lambda)\in W$ the shortest element with the property $v(\lambda)\lambda=\lambda_{-}$, the unique element in $W\cdot\lambda\cap P^{-}$. For $w\in W$ we denote $R(w)=R^{+}\cap w^{-1}R^{-}$, where $R^{-}=-R^{+}$. We have
$$R(v(\lambda))=\{\alpha\in R^{+}\mid(\lambda,\alpha)>0\}$$
by \cite[(2.4.4)]{MR1976581}, which will be used in the proof of the following result.

\begin{lemma}\label{lemma:lambdaordering}
Let $\lambda\in P_{I}^{+}$. If $\mu<_{\emptyset}\lambda$ and $w\in W_{I,\lambda}$, then $w\mu<_{\emptyset}\lambda$.
\end{lemma}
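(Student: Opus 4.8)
The plan is to reduce to the case where $w=s_\beta$ is one of the simple reflections generating $W_{I,\lambda}$. Since $W_{I,\lambda}$ is parabolic with positive roots $R_{I,\lambda}^{+}=\{\alpha\in R_I^{+}\mid(\lambda,\alpha)=0\}$, it is generated by the simple reflections it contains, i.e.\ the $s_\beta$ with $\beta\in I$ and $(\lambda,\beta)=0$. Writing $w=s_{\beta_1}\cdots s_{\beta_r}$ in these generators, I would induct on $r$: if $\nu<_{\emptyset}\lambda$ and $(\lambda,\beta)=0$, the single-reflection case gives $s_\beta\nu<_{\emptyset}\lambda$, and one feeds this back in. Strictness is never lost, because $s_\beta\nu=\lambda$ would force $\nu=s_\beta\lambda=\lambda$. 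Thus it suffices to treat one simple $s_\beta$ with $\beta\in I$, $(\lambda,\beta)=0$.

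The easy part uses clause (1) of the ordering. As $s_\beta\in W$ we have $(s_\beta\mu)_{+}=\mu_{+}$, so if $\mu_{+}\prec\lambda_{+}$ strictly then $(s_\beta\mu)_{+}\prec\lambda_{+}$ and $s_\beta\mu<_{\emptyset}\lambda$ at once. The content is the case $\mu_{+}=\lambda_{+}$, where I must compare $\overline{v}(s_\beta\mu)$ with $v:=\overline{v}(\lambda)$ in the Bruhat order; set $u:=\overline{v}(\mu)$, so $u<_{W}v$ strictly (as $\mu\neq\lambda$ and $\overline{v}$ is injective on $W\cdot\lambda_{+}$). Next I record two facts. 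Since $(\lambda,\beta)=0$, \cite[(2.7.2)(ii)]{MR1976581} gives $v^{-1}\beta>0$, and because $(\lambda_{+},v^{-1}\beta)=(\lambda,\beta)=0$ this $v^{-1}\beta$ is a positive root orthogonal to $\lambda_{+}$; hence $s_\beta v=v\,s_{v^{-1}\beta}$ lies in the coset $v\,W_{\lambda_{+}}$ and $\ell(s_\beta v)=\ell(v)+1$. On the other side, when $(\mu,\beta)\neq 0$ a direct check shows $s_\beta u\in W^{\lambda_{+}}$ and $\overline{v}(s_\beta\mu)=s_\beta u$, with $\ell(s_\beta u)=\ell(u)+1$ if $(\mu,\beta)>0$ and $\ell(s_\beta u)=\ell(u)-1$ if $(\mu,\beta)<0$ (while $(\mu,\beta)=0$ gives $s_\beta\mu=\mu$).

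In the subcases $(\mu,\beta)\le 0$ the conclusion is immediate: $\overline{v}(s_\beta\mu)$ equals $u$, or equals $s_\beta u<_{W}u<_{W}v$, so in either case $\overline{v}(s_\beta\mu)\le_{W}v$. The crux is $(\mu,\beta)>0$, where $\overline{v}(s_\beta\mu)=s_\beta u$ has strictly larger length than $u$ and I must still show $s_\beta u\le_{W}v$. Here the Bruhat lifting property only yields $s_\beta u\le_{W}s_\beta v$: indeed $u<_{W}v<_{W}s_\beta v$, and $s_\beta$ is a left descent of $s_\beta v$ but not of $u$, so lifting gives $s_\beta u\le_{W}s_\beta v$. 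This is weaker than what I want, since $s_\beta v>_{W}v$, and I expect this gap to be the main obstacle.

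The gap is closed by passing to the parabolic quotient: the projection $\pi\colon W\to W^{\lambda_{+}}$ sending each $w$ to the shortest representative of $wW_{\lambda_{+}}$ is order-preserving for the Bruhat order (a standard property of parabolic quotients of Coxeter groups). Applying $\pi$ to $s_\beta u\le_{W}s_\beta v$, and using $s_\beta u\in W^{\lambda_{+}}$, so $\pi(s_\beta u)=s_\beta u$, together with $s_\beta v\in vW_{\lambda_{+}}$, so $\pi(s_\beta v)=v$, gives exactly $s_\beta u\le_{W}v$. Hence $\overline{v}(s_\beta\mu)\le_{W}v$ in all cases, and since $s_\beta\mu\neq\lambda$ we conclude $s_\beta\mu<_{\emptyset}\lambda$, completing the induction and the proof.
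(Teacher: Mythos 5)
Your proof is correct and follows essentially the same route as the paper's: reduce to a single simple reflection $s_\beta$ fixing $\lambda$, split on the sign of $(\mu,\beta)$, and invoke the lifting property in the hard case $(\mu,\beta)>0$. The only difference is bookkeeping: the paper tracks the antidominant representatives $v(\cdot)$, for which the lifting property yields $v(\lambda)\le_{W}v(\mu)s_\beta=v(s_\beta\mu)$ directly, whereas you track $\overline{v}(\cdot)$ as in the definition of $\le_{\emptyset}$ and therefore need the extra (standard, and correctly applied) step that the projection onto the parabolic quotient $W^{\lambda_{+}}$ is order-preserving.
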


\begin{proof}
If $\mu_{+}\prec\lambda_{+}$ then $w\mu<_{\emptyset}\lambda$ for $w\in W$, so we assume $\mu_{+}=\lambda_{+}$.
Let $\alpha\in R_{I,\lambda}^{+}$ be a simple root. The claim follows by induction on $\ell(w)$ once we have $s_{\alpha}\mu<_{\emptyset}\lambda$. 

If $(\mu,\alpha^{\vee})=0$, then $s_{\alpha}(\mu)=\mu$ and the claim holds. If $(\mu,\alpha^{\vee})\ne0$ then $v(s_{\alpha}\mu)=v(\mu)s_{\alpha}$ (by \cite[(2.4.14(i))]{MR1976581}).

Suppose that $(\mu,\alpha^{\vee})<0$. Then $v(\mu)\alpha>0$ (because $\alpha\not\in R(v(\mu))$), which implies $\ell(v(\mu)s_{\alpha})>\ell(v(\mu))$ and in turn $v(\mu)<_{W}v(\mu)s_{\alpha}=v(s_{\alpha}\mu)$. It follows that $v(\lambda)<_{W}v(s_{\alpha}\mu)$ and hence $s_{\alpha}\mu<_{\emptyset}\lambda$.

Suppose that $(\mu,\alpha^{\vee})>0$. Then $v(\mu)\alpha<0$ from which $\ell(v(\mu)s_{\alpha})<\ell(v(\mu))$. At the same time we have $(\lambda,\alpha)=0$ so $\alpha\not\in S(v(\lambda))$ from which $v(\lambda)\alpha>0$. It follows that $\ell(v(\lambda)s_{\alpha})>\ell(v(\lambda))$. The Lifting Property \cite[Prop.2.2.7]{MR2133266} implies $v(\lambda)\le_{W}v(\mu)s_{\alpha}=v(s_{\alpha}\mu)$ and hence $s_{\alpha}\mu\le_{\emptyset}\lambda$. If $s_{\alpha}\mu=\lambda$, then also $\mu=\lambda$, which violates $\mu<_{\emptyset}\lambda$. We conclude that $s_{\alpha}\mu<_{\emptyset}\lambda$. 
\end{proof}

\begin{lemma}
If $w\in W_{I,\lambda}$ and $\lambda\in P^{+}_{I}$, then $wE(\lambda,k)=E(\lambda,k)$.
\end{lemma}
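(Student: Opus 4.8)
The plan is to show that $wE(\lambda,k)$ satisfies the two conditions that characterize $E(\lambda,k)=p_{\emptyset}(\lambda,k)$ uniquely, and then to conclude by uniqueness. Recall that for $I=\emptyset$ we have $m_{\emptyset}(\mu)=e^{\mu}$, so the defining conditions read
\begin{equation*}
E(\lambda,k)=\sum_{\mu\le_{\emptyset}\lambda}c_{\lambda,\mu}e^{\mu},\quad c_{\lambda,\lambda}=1,\qquad (E(\lambda,k),e^{\mu})_{k}=0\ \text{for all }\mu<_{\emptyset}\lambda.
\end{equation*}
These two properties determine $E(\lambda,k)$ uniquely: if $q\in\mathrm{span}\{e^{\mu}\mid\mu\le_{\emptyset}\lambda\}$ has the same leading coefficient and the same orthogonality, then $E(\lambda,k)-q$ lies in $\mathrm{span}\{e^{\mu}\mid\mu<_{\emptyset}\lambda\}$ and is orthogonal to every such $e^{\mu}$, hence orthogonal to itself and therefore zero, since $(\cdot,\cdot)_{k}$ is positive definite for $k_{\alpha}\ge0$. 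It thus suffices to verify that $wE(\lambda,k)$ enjoys both properties.

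For the triangularity I would apply $w$ termwise: $wE(\lambda,k)=\sum_{\mu\le_{\emptyset}\lambda}c_{\lambda,\mu}e^{w\mu}$. Since $w\in W_{I,\lambda}$ fixes $\lambda$, the term $\mu=\lambda$ contributes $e^{\lambda}$, and no other $\mu\le_{\emptyset}\lambda$ can satisfy $w\mu=\lambda$ (that would force $\mu=w^{-1}\lambda=\lambda$); hence the coefficient of $e^{\lambda}$ in $wE(\lambda,k)$ is again $1$. For every $\mu<_{\emptyset}\lambda$ occurring in the sum, Lemma \ref{lemma:lambdaordering} gives $w\mu<_{\emptyset}\lambda$. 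Therefore all exponents of $wE(\lambda,k)$ lie in $\{\nu\mid\nu\le_{\emptyset}\lambda\}$ and the leading coefficient is $1$, which is the first condition.

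For the orthogonality I would use that $(\cdot,\cdot)_{k}$ is $W$-invariant, which holds because $\delta_{k}$ and the Haar measure $dt$ are $W$-invariant (the $k_{\alpha}$ are $W$-invariant and $W$ permutes $R$). Consequently, for $\mu<_{\emptyset}\lambda$,
\begin{equation*}
(wE(\lambda,k),e^{\mu})_{k}=(E(\lambda,k),w^{-1}e^{\mu})_{k}=(E(\lambda,k),e^{w^{-1}\mu})_{k}.
\end{equation*}
Since $w^{-1}\in W_{I,\lambda}$ as well, Lemma \ref{lemma:lambdaordering} gives $w^{-1}\mu<_{\emptyset}\lambda$, so this inner product vanishes by the defining orthogonality of $E(\lambda,k)$. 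Thus $wE(\lambda,k)$ meets both conditions, and uniqueness yields $wE(\lambda,k)=E(\lambda,k)$. The only genuine input here is the order-compatibility of the $W_{I,\lambda}$-action, which is precisely Lemma \ref{lemma:lambdaordering}; once that is in hand the remaining steps are a short bookkeeping check, so I do not expect a serious obstacle.
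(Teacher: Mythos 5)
Your proof is correct and follows essentially the same route as the paper: establish the triangular expansion of $wE(\lambda,k)$ via Lemma \ref{lemma:lambdaordering}, then use the $W$-invariance of $(\cdot,\cdot)_{k}$ together with the same lemma to verify the orthogonality conditions, and conclude by uniqueness. The extra details you supply (the positive-definiteness argument for uniqueness and the justification of $W$-invariance of the inner product) are implicit in the paper's shorter proof.
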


\begin{proof}
Write $E(\lambda,k)=e^{\lambda}+\sum_{\mu<_{\emptyset}\lambda}c_{\mu}e^{\mu}$ and use Lemma \ref{lemma:lambdaordering} to see that $wE(\lambda,k)=e^{\lambda}+\sum_{\mu<_{\emptyset}\lambda}d_{\mu}e^{\mu}$ has the same type of expansion as $E(\lambda,k)$.
If $\nu<_{\emptyset}\lambda$, then
$$(wE(\lambda,k),e^{\nu})_{k}=(E(\lambda,k),e^{w^{-1}\nu})_{k}=0,$$
because $w^{-1}\nu<_{\emptyset}\lambda$ (Lemma \ref{lemma:lambdaordering}) and the defining property of $E(\lambda,k)$. We conclude that $wE(\lambda,k)=E(\lambda,k)$.
\end{proof}

\begin{lemma}\label{lemma:relativeordering}
Let $\lambda\in P^{+}_{I}$ and $\mu\in P$ with $\mu_{+}=\lambda_{+}$. If $\mu <_{\emptyset}\lambda$, then $\mu_{I,+}<_{\emptyset}\lambda$.
\end{lemma}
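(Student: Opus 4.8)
The plan is to reduce the statement to a single Bruhat-order comparison, using that $\mu_{I,+}$ lies in the same $W$-orbit as $\mu$ and exploiting the length-additive factorization recorded in Lemma~\ref{lemma:decomp}.

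First I would pin down the dominant representative of $\mu_{I,+}$. Since $\mu_{I,+}\in W_{I}\cdot\mu\subseteq W\cdot\mu$, its unique dominant representative is $(\mu_{I,+})_{+}=\mu_{+}=\lambda_{+}$. Hence condition (1) in the definition of $\le_{\emptyset}$ holds with equality for the pair $(\mu_{I,+},\lambda)$, and the whole assertion $\mu_{I,+}<_{\emptyset}\lambda$ reduces to showing $\overline{v}(\mu_{I,+})<_{W}\overline{v}(\lambda)$ in the Bruhat order, since by definition $\le_{\emptyset}$ is governed by the Bruhat comparison of the $\overline{v}$'s once the dominant representatives agree.

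Next I would feed $\mu$ (rather than $\lambda$) into Lemma~\ref{lemma:decomp}, which is legitimate because that lemma holds for every weight in $P$. It gives $\overline{v}(\mu)^{-1}=\overline{v}(\mu_{I,+})^{-1}\overline{v}_{I}(\mu)^{-1}$ with $\overline{v}(\mu_{I,+})^{-1}\in W^{I}$ and $\overline{v}_{I}(\mu)^{-1}\in W_{I}$, and the lengths add, this being exactly the canonical parabolic factorization established in the proof of that lemma. Consequently $\overline{v}(\mu_{I,+})^{-1}$ occurs as an initial segment of a reduced expression for $\overline{v}(\mu)^{-1}$, so the subword characterization of the Bruhat order yields $\overline{v}(\mu_{I,+})^{-1}\le_{W}\overline{v}(\mu)^{-1}$; since the Bruhat order is invariant under inversion, this reads $\overline{v}(\mu_{I,+})\le_{W}\overline{v}(\mu)$.

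Finally I would combine this with the hypothesis $\mu<_{\emptyset}\lambda$. Because $\mu_{+}=\lambda_{+}$ and $\mu\ne\lambda$, the definition of $\le_{\emptyset}$ forces $\overline{v}(\mu)<_{W}\overline{v}(\lambda)$ strictly. Chaining the two inequalities gives $\overline{v}(\mu_{I,+})\le_{W}\overline{v}(\mu)<_{W}\overline{v}(\lambda)$, hence $\overline{v}(\mu_{I,+})\ne\overline{v}(\lambda)$; as $\lambda=\overline{v}(\lambda)\lambda_{+}$ and $\mu_{I,+}=\overline{v}(\mu_{I,+})\lambda_{+}$ share the dominant representative $\lambda_{+}$, this forces $\mu_{I,+}\ne\lambda$, and together with $(\mu_{I,+})_{+}=\lambda_{+}$ we conclude $\mu_{I,+}<_{\emptyset}\lambda$. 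The only genuinely structural step is the passage from the length-additive factorization of Lemma~\ref{lemma:decomp} to a Bruhat inequality via the subword/prefix property together with invariance under inversion; everything else is bookkeeping with dominant representatives, so I expect that translation to be the main point to state carefully (and, notably, to remember to invoke Lemma~\ref{lemma:decomp} for $\mu$, not $\lambda$).
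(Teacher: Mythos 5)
Your proof is correct and follows essentially the same route as the paper: apply Lemma~\ref{lemma:decomp} to $\mu$ to get the length-additive factorization $\overline{v}(\mu)^{-1}=\overline{v}(\mu_{I,+})^{-1}\overline{v}_{I}(\mu)^{-1}$, deduce $\overline{v}(\mu_{I,+})^{-1}\le_{W}\overline{v}(\mu)^{-1}$ by the subword property, and chain with $\overline{v}(\mu)<_{W}\overline{v}(\lambda)$. You merely make explicit some steps the paper leaves implicit (that $(\mu_{I,+})_{+}=\lambda_{+}$, the inversion-invariance of the Bruhat order, and the final strictness), which is fine.
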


\begin{proof}
From Lemma \ref{lemma:decomp} we have $\overline{v}(\mu)^{-1}=\overline{v}(\mu_{I,+})^{-1}\overline{v}_{I}(\mu)^{-1}\in W^{I}W_{I}$ so $\overline{v}(\mu_{I,+})^{-1}\le_{W}\overline{v}(\mu)^{-1}$, being a subword of a reduced expression. It follows that $\overline{v}(\mu_{I,+})<_{W}\overline{v}(\lambda)$ from which $\mu_{I,+}<_{\emptyset}\lambda$.
\end{proof}

\begin{proposition}\label{prop:jacobipol}
We have $p_{I}(\lambda,k)=\sum_{w\in (W_{I})^{\lambda}}wE(\lambda,k)$ for $\lambda\in P^{+}_{I}$.
\end{proposition}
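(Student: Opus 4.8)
The plan is to show that the right-hand side $Q:=\sum_{w\in(W_I)^\lambda}wE(\lambda,k)$, where $(W_I)^\lambda$ denotes the minimal-length representatives of $W_I/W_{I,\lambda}$, satisfies the two properties that characterize $p_I(\lambda,k)$ in the definition, and then to conclude by the uniqueness of the Gram--Schmidt construction. Since the preceding lemma gives $w'E(\lambda,k)=E(\lambda,k)$ for every $w'\in W_{I,\lambda}$, I would first rewrite $Q=\tfrac{1}{|W_{I,\lambda}|}\sum_{w\in W_I}wE(\lambda,k)$; in this form $Q$ is manifestly $W_I$-invariant, hence lies in $\bbC[P]^{W_I}$.

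Next I would establish the triangular shape $Q=m_I(\lambda)+\sum_{\mu<_I\lambda}c_{\lambda,\mu}m_I(\mu)$. Writing $E(\lambda,k)=e^\lambda+\sum_{\rho<_\emptyset\lambda}c_\rho e^\rho$, the $W_I$-symmetrization of $e^\lambda$ produces exactly $m_I(\lambda)$, while each lower monomial $e^\rho$ symmetrizes into a multiple of $m_I(\rho_{I,+})$. To see that only strictly smaller basis elements occur I would split into the cases $\rho_+\prec\lambda_+$, where $(\rho_{I,+})_+=\rho_+\prec\lambda_+$ forces $\rho_{I,+}<_I\lambda$, and $\rho_+=\lambda_+$, where Lemma \ref{lemma:relativeordering} gives $\rho_{I,+}<_\emptyset\lambda$. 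Since every lower $\rho$ thus yields $\rho_{I,+}<_I\lambda$ strictly, none of them feeds back into $m_I(\lambda)$, so the coefficient of $m_I(\lambda)$ is genuinely $1$.

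For the orthogonality I would verify $(Q,m_I(\mu))_k=0$ for $\mu<_I\lambda$. Using the $W$-invariance of $(\cdot,\cdot)_k$ together with the $W_I$-invariance of $m_I(\mu)$, every summand $(wE(\lambda,k),m_I(\mu))_k$ equals $(E(\lambda,k),m_I(\mu))_k$, so the whole sum collapses to $|(W_I)^\lambda|\,(E(\lambda,k),m_I(\mu))_k$, and it suffices to prove $(E(\lambda,k),m_I(\mu))_k=\sum_{\nu\in W_I\cdot\mu}(E(\lambda,k),e^\nu)_k=0$. Here I would invoke that the $E(\nu,k)$ form an orthogonal basis \cite[Cor.~2.11]{MR1353018} and are triangular: expressing $e^\nu$ in this basis involves only $E(\rho,k)$ with $\rho\le_\emptyset\nu$, so $(E(\lambda,k),e^\nu)_k=0$ whenever $\lambda\not\le_\emptyset\nu$. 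Everything therefore reduces to the combinatorial claim that no $\nu\in W_I\cdot\mu$ satisfies $\lambda\le_\emptyset\nu$.

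This claim is the crux, and I expect it to be the main obstacle. If $\lambda\le_\emptyset\nu$ held, then $\lambda_+\preceq\nu_+=\mu_+\preceq\lambda_+$ would force $\mu_+=\lambda_+=\nu_+$, reducing the comparison to the Bruhat statement $\overline{v}(\lambda)^{-1}\le_W\overline{v}(\nu)^{-1}$. By Lemma \ref{lemma:decomp} one has $\overline{v}(\nu)^{-1}=\overline{v}(\mu)^{-1}\overline{v}_I(\nu)^{-1}$ with lengths adding, and since $\mu\in P_I^+$ gives $\overline{v}(\mu)^{-1}\in W^I$, this is precisely the parabolic factorization, so $\overline{v}(\mu)^{-1}$ is the minimal representative of the coset $\overline{v}(\nu)^{-1}W_I$. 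Applying the order-preserving projection $W\to W^I$ for the Bruhat order \cite[Prop.~2.5.1]{MR2133266} to $\overline{v}(\lambda)^{-1}\le_W\overline{v}(\nu)^{-1}$, and using that $\overline{v}(\lambda)^{-1}\in W^I$ is already reduced, yields $\overline{v}(\lambda)^{-1}\le_W\overline{v}(\mu)^{-1}$, i.e.\ $\overline{v}(\lambda)\le_W\overline{v}(\mu)$, contradicting $\mu<_\emptyset\lambda$. With the claim established, $(Q,m_I(\mu))_k=0$ for all $\mu<_I\lambda$, so $Q$ satisfies both defining conditions and equals $p_I(\lambda,k)$ by uniqueness.
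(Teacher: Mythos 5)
Your proposal is correct and follows essentially the same route as the paper: the triangular expansion via Lemma \ref{lemma:relativeordering}, and the orthogonality reduced to $(E(\lambda,k),e^{\nu})_k=0$ for $\nu\in W_I\cdot\mu$ via Lemma \ref{lemma:decomp} together with the order-preserving projection $W\to W^I$ of \cite[Prop.~2.5.1]{MR2133266}. The only cosmetic difference is that you first collapse the sum using the $W$-invariance of the inner product, and you spell out why incomparability of labels forces $(E(\lambda,k),e^{\nu})_k=0$, a point the paper uses without comment.
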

\begin{proof}
The terms that occur in the sum $\sum_{w\in (W_{I})^{\lambda}}wE(\lambda,k)$ are multiples of $e^{w\mu}$ where $\mu<_{\emptyset}\lambda$ and $w\in W_{I}$. By Lemma \ref{lemma:relativeordering}, the weights $w\mu\in P_{I}^{+}$ with this properties satisfy $w\mu<_{\emptyset}\lambda$. It follows that $\sum_{w\in (W_{I})^{\lambda}}wE(\lambda,k)=m_{I}(\lambda)+\sum_{\mu<_{I}\lambda}c_{\lambda,\mu}m_{I}(\mu)$ has the same type of expansion as $p_{I}(\lambda)$.

Let $\nu<_{I}\lambda$ and $w\in W_{I}$. If $\nu_{+}\prec\lambda_{+}$ then $w\nu<_{\emptyset}\lambda$ for all $w\in W$ and $(E(\lambda,k),e^{w\nu})_{k}=0$. If $\lambda_{+}=\nu_{+}$ then either $w\nu$ and $\lambda$ are not comparable, in which case $(E(\lambda,k),e^{w\nu})_{k}=0$, or $w\nu$ and $\lambda$ are comparable. In the latter case, suppose $\overline{v}(\lambda)\le_{W}\overline{v}(w\nu)$ or equivalently $\overline{v}(\lambda)^{-1}\le_{W}\overline{v}(w\nu)^{-1}$.
By Lemma \ref{lemma:decomp} we have $\overline{v}(w\nu)^{-1}=\overline{v}(\nu)^{-1}\overline{v}_{I}(w\nu)^{-1}$ and since the map $W=W^{I}W_{I}\to W^{I}:w''w'\mapsto w''$ is order-preserving (cf.~\cite[Prop.2.5.1]{MR2133266}) we have $\overline{v}(\lambda)^{-1}\le_{W}\overline{v}(\nu)^{-1}$, contradicting $\nu<_{\emptyset}\lambda$.
Hence $\overline{v}(\lambda)>_{W}\overline{v}(w\nu)$ and in turn $\lambda>_{\emptyset}w\nu$. We conclude that also in this case we have $(E(\lambda,k),e^{w\nu})_{k}=0$, whence $(E(\lambda,k),m_{I}(\nu))_{k}=0$ and in turn $(\sum_{w\in (W_{I})^{\lambda}}wE(\lambda,k),m_{I}(\nu))_{k}=0$.
\end{proof}



\section{Orthogonality}\label{s:ortho}

In the extreme cases $I=\emptyset$ and $I=S$ it is known that the Jacobi polynomials are pairwise orthogonal, see \cite[Cor.2.11]{MR1353018} and \cite[Cor.1.3.13]{MR1313912} respectively. For the intermediate cases we invoke the same argument. Let $\lah=\laa\otimes\bbC$. 

The graded Hecke algebra $\bfH(R^{+},k)$ is isomorphic to $S(\lah)\otimes\bbC[W]$ as a vector space and it acts on $\bbC[P]$ as described in \cite[Cor.~2.9]{MR1353018}. Most importantly, $\xi\in S(\lah)$ acts by the Cherednik operator (Def.~2.1 of loc.cit.)
$$D_{\xi}(k)=\del_{\xi}+\sum_{\alpha>0}k_{\alpha}\alpha(\xi)\frac{1}{1-e^{-\alpha}}(1-s_{\alpha})-\rho(k)(\xi)$$
which is a differential-reflection operator that is symmetric with respect to the inner product $(\cdot,\cdot)_{k}$ on $\bbC[P]$. The Cherednik operator $D_{\xi}(k)$ has $E(\lambda,k)$ as an eigenfunction,
$$D_{\xi}(k)E(\lambda,k)=\widetilde{\lambda}(\xi)E(\lambda,k),$$
where $\widetilde{\lambda}=\lambda-v(\lambda)^{-1}\rho(k)$.
Note that $\widetilde{\lambda}=\lambda+\frac{1}{2}\sum_{\alpha\in R_{+}}k_{\alpha}\eps(\lambda(\alpha^{\vee}))\alpha$ where $\eps(x)=x/|x|$ if $x\ne0$ and $\eps(0)=-1$.
The Cherednik operators $D_{\xi}(k),\xi\in\lah$ commute so we can evaluate a polynomial $q\in S(\lah)$ in the Cherednik operators to obtain $q(D(k))\in\bfH(k)$.

\begin{lemma}
If $q\in S(\lah)^{W_{I}}$ then $q(D(k))$ commutes with the action of $W_{I}$ on $\bbC[P]$. 
\end{lemma}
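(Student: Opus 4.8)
The plan is to reduce the statement to the commutation of $q(D(k))$ with each \emph{simple} reflection generating $W_{I}$, and then to exploit the cross-relations between $S(\lah)$ and the simple reflections inside $\bfH(R^{+},k)$. Since $W_{I}$ is a parabolic subgroup, it is generated by the simple reflections $s_{i}$ with $i\in I$, and an operator commuting with each of these generators commutes with all of $W_{I}$. Thus it suffices to prove that $s_{i}q(D(k))=q(D(k))s_{i}$ on $\bbC[P]$ for every simple $s_{i}\in W_{I}$ and every $q\in S(\lah)^{W_{I}}$.

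The engine is the degree-one cross relation in $\bfH(R^{+},k)$: for a simple root $\alpha_{i}$ and $\xi\in\lah$ one has $s_{i}D_{\xi}(k)=D_{s_{i}\xi}(k)\,s_{i}+k_{\alpha_{i}}\alpha_{i}(\xi)$, which is exactly the graded Hecke algebra relation encoded in the action of \cite[Cor.~2.9]{MR1353018}. Because the Cherednik operators commute, $\xi\mapsto D_{\xi}(k)$ extends to an algebra homomorphism $S(\lah)\to\bfH(k)$, $q\mapsto q(D(k))$, so this relation can be raised to arbitrary degree. I would introduce the divided-difference (Demazure) operator $\Delta_{i}\colon S(\lah)\to S(\lah)$, $\Delta_{i}q=(\alpha_{i}^{\vee})^{-1}(q-s_{i}q)$, which is well defined because $\alpha_{i}^{\vee}$ divides $q-s_{i}q$, which lowers degree by one, and which satisfies $\Delta_{i}\xi=\alpha_{i}(\xi)$ on degree-one elements. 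Using $s_{i}(qp)=(s_{i}q)(s_{i}p)$ together with the twisted Leibniz rule $\Delta_{i}(qp)=(\Delta_{i}q)\,p+(s_{i}q)\,\Delta_{i}p$, an induction on the degree of $q$, with the degree-one relation as base case, yields the general cross relation
\begin{equation*}
s_{i}\,q(D(k))=(s_{i}q)(D(k))\,s_{i}+k_{\alpha_{i}}\,(\Delta_{i}q)(D(k))
\end{equation*}
as operators on $\bbC[P]$.

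With this relation the conclusion is immediate. If $q\in S(\lah)^{W_{I}}$ and $s_{i}\in W_{I}$, then $s_{i}q=q$, so on the one hand $(s_{i}q)(D(k))=q(D(k))$ and on the other hand $\Delta_{i}q=(\alpha_{i}^{\vee})^{-1}(q-s_{i}q)=0$; the correction term vanishes and the relation collapses to $s_{i}\,q(D(k))=q(D(k))\,s_{i}$. As $s_{i}$ ranges over the simple generators of $W_{I}$, this shows that $q(D(k))$ commutes with the action of $W_{I}$.

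The main obstacle is the inductive upgrade of the degree-one relation to arbitrary $q$, that is, verifying that the correction is precisely $k_{\alpha_{i}}(\Delta_{i}q)(D(k))$ rather than some less structured operator; this is exactly where the homomorphism property of $\xi\mapsto D_{\xi}(k)$ and the twisted Leibniz identity for $\Delta_{i}$ are needed, and where the normalization $\Delta_{i}\xi=\alpha_{i}(\xi)$ must be matched to the coefficient in $D_{\xi}(k)$. It is worth stressing that the argument uses the cross relation only for \emph{simple} reflections, which is why reducing to the generators of the parabolic subgroup $W_{I}$ is essential and why, as remarked in the introduction, the method does not extend to an arbitrary reflection subgroup.
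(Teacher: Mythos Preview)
Your argument is correct and follows the same route as the paper: the paper's one-line proof simply cites \cite[Prop.~1.1(2)]{MR1353018}, which is precisely the graded Hecke cross relation $s_{i}\xi=s_{i}(\xi)s_{i}-k_{\alpha_{i}}\alpha_{i}(\xi)$ you invoke, and the passage from this degree-one relation to arbitrary $q\in S(\lah)^{W_{I}}$ is the standard Demazure-operator induction you spell out. One minor notational point: the divisor in your $\Delta_{i}$ should be the coroot viewed as an element of $\lah$ (not of $\laa^{*}$), but this does not affect the argument.
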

\begin{proof}
This follows immediately from \cite[Prop.1.1(2)]{MR1353018}.
\end{proof}

The differential-reflection operators $q(D(k))$ with $q\in S(\lah)^{W_{I}}$ leave $\bbC[P]^{W_{I}}$ invariant. The restriction of $q(D(k))$ to $\bbC[P]^{W_{I}}$ is denoted by $D_{I,q}$. Note that each $D_{I,q}$ is symmetric with respect to $(\cdot,\cdot)_{k}$.
The Jacobi polynomials $p_{I}(\lambda,k)$ are eigenfunctions for the differential-reflection operators $D_{I,q}$ with $q\in S(\lah)^{W_{I}}$,
$$D_{I,q}(p_{I}(\lambda,k))=\sum_{w\in (W_{I})^{\lambda}}D_{I,q}\left(wE(\lambda,k)\right)
=\sum_{w\in (W_{I})^{\lambda}}q(\widetilde{\lambda})wE(\lambda,k)=q(\widetilde{\lambda})p_{I}(\lambda,k).$$

To show that the polynomials $q\in S(\lah)^{W_{I}}$ separate the points $\widetilde{\lambda}$ with $\lambda\in P^{+}_{I}$ we invoke the following result. 

\begin{lemma}\label{lemma:lambdatilde}
Let $\lambda\in P^{+}_{I}$, $w^{I,\lambda}_{0}\in W_{I,\lambda}$ the longest element in $W_{I}$ that fixes $\lambda$ and $w^{I}_{0}\in W_{I}$ the longest element. Then
$\widetilde{\lambda}=w^{I,\lambda}_{0}(\lambda-w_{0}^{I}v(w_{0}^{I}\lambda)^{-1}\rho(k))$.
\end{lemma}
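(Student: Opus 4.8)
The plan is to reduce the statement to the purely group-theoretic identity
\begin{equation*}
v(\lambda) = v(w_0^I\lambda)\,w_0^I\,w_0^{I,\lambda}\quad\text{in }W, \tag{$\ast$}
\end{equation*}
and then read off the lemma. Indeed, since $w_0^I$ and $w_0^{I,\lambda}$ are involutions, $(\ast)$ gives $v(\lambda)^{-1} = w_0^{I,\lambda}w_0^I v(w_0^I\lambda)^{-1}$, so from the definition $\widetilde{\lambda} = \lambda - v(\lambda)^{-1}\rho(k)$ together with the fact that $w_0^{I,\lambda}\in W_{I,\lambda}$ fixes $\lambda$ I obtain
\[
\widetilde{\lambda} = \lambda - w_0^{I,\lambda}w_0^I v(w_0^I\lambda)^{-1}\rho(k) = w_0^{I,\lambda}\lambda - w_0^{I,\lambda}\big(w_0^I v(w_0^I\lambda)^{-1}\rho(k)\big) = w_0^{I,\lambda}\big(\lambda - w_0^I v(w_0^I\lambda)^{-1}\rho(k)\big),
\]
which is exactly the asserted formula. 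So everything rests on $(\ast)$.

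To prove $(\ast)$, I would first observe that the right-hand side $xy$, with $x = v(w_0^I\lambda)$ and $y = w_0^I w_0^{I,\lambda}$, sends $\lambda$ to $\lambda_-$: the factor $w_0^{I,\lambda}$ fixes $\lambda$, then $w_0^I\lambda$ is the $R_I^+$-antidominant point of $W_I\cdot\lambda$, and $v(w_0^I\lambda)$ carries it to $(w_0^I\lambda)_- = \lambda_-$. Hence $xy$ lies in the coset $v(\lambda)W_\lambda$ of the stabilizer $W_\lambda$ of $\lambda$ in $W$, whose unique shortest element is $v(\lambda)$ by definition. It therefore suffices to show that $\ell(xy) = \ell(v(\lambda))$, for then $xy$ is the minimal-length representative of the coset and must coincide with $v(\lambda)$.

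The technical heart is the claim that the product $x\cdot y$ is length-additive and that the two contributions add up to $\ell(v(\lambda))$. For the first part I would use the criterion $\ell(xy) = \ell(x) + \ell(y) \iff R(x)\cap R(y^{-1}) = \emptyset$. Since $y^{-1} = w_0^{I,\lambda}w_0^I \in W_I$ and $W_I$ permutes $R^+\setminus R_I^+$, we have $R(y^{-1})\subseteq R_I^+$; on the other hand, using $R(v(\mu)) = \{\alpha\in R^+ : (\mu,\alpha)>0\}$ with $\mu = w_0^I\lambda$ together with $(w_0^I\lambda,\alpha) = (\lambda, w_0^I\alpha)$ and the $R_I^+$-dominance of $\lambda$, one checks that $R(x)\cap R_I^+ = \emptyset$, so $R(x)\subseteq R^+\setminus R_I^+$. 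The two inversion sets live in complementary regions and are therefore disjoint, giving length-additivity. For the count, the same computation (using that $w_0^I$ permutes $R^+\setminus R_I^+$ bijectively) identifies $\ell(x) = |\{\alpha\in R^+\setminus R_I^+ : (\lambda,\alpha)>0\}|$, while the standard identity $\ell(w_0^I w_0^{I,\lambda}) = \ell(w_0^I) - \ell(w_0^{I,\lambda}) = |R_I^+\setminus R_{I,\lambda}^+| = |\{\alpha\in R_I^+ : (\lambda,\alpha)>0\}|$ handles $y$; adding these and invoking $R(v(\lambda)) = \{\alpha\in R^+ : (\lambda,\alpha)>0\}$ yields $\ell(x)+\ell(y) = \ell(v(\lambda))$.

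I expect the main obstacle to be precisely the bookkeeping in the previous paragraph: splitting the positive roots into $R_I^+$ and $R^+\setminus R_I^+$, and verifying both that $R(v(w_0^I\lambda))$ avoids $R_I^+$ and that its cardinality is exactly the number of $\alpha\in R^+\setminus R_I^+$ with $(\lambda,\alpha)>0$. This rests on the two structural facts about the longest element of a parabolic ($w_0^I$ permutes $R^+\setminus R_I^+$ and sends $R_I^+$ to $R_I^-$), which must be combined carefully with the $R_I^+$-dominance of $\lambda\in P_I^+$. Once the disjointness and the length count are in place, the conclusion $xy = v(\lambda)$, and hence $(\ast)$, is immediate.
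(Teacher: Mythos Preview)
Your reduction to the identity $(\ast)$ is exactly what the paper does, and your verification of $(\ast)$ is correct. The paper's own argument for $(\ast)$ is terser: it writes $v(\lambda)=w''w'$ in the unique factorization $W=W^{I}W_{I}$ and, ``along the same lines as the proof of Lemma~\ref{lemma:decomp}'', identifies $w''=v(w_{0}^{I}\lambda)\in W^{I}$ and $w'=w_{0}^{I}w_{0}^{I,\lambda}$ as the shortest element of $W_{I}$ carrying $\lambda\in P_{I}^{+}$ into $P_{I}^{-}$. Your inversion-set computation $R(x)\cap R_{I}^{+}=\emptyset$ is precisely the condition $x\in W^{I}$, and your length-additivity together with the length count is a hands-on replacement for the minimality argument borrowed from Lemma~\ref{lemma:decomp}. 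So the two proofs are the same in substance; yours is a more explicit unpacking of the parabolic decomposition, while the paper compresses it into a reference.
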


\begin{remark}
Lemma \ref{lemma:lambdatilde} applied to $I=\emptyset$ gives back the definition of $\widetilde{\lambda}$. Applied to $I=S$ gives $\widetilde{\lambda}=w^{\lambda}_{0}(\lambda-w_{0}v(\lambda_{-})\rho(k))=w^{\lambda}_{0}(\lambda+\rho(k))$, because $v(\lambda_{-})=e$. The latter case has been observed by Heckman, cf.~\cite[Prop.2.10]{MR1353018}.
\end{remark}

\begin{proof}
We have to show $v(\lambda)=v(w_{0}^{I}\lambda)w^{I}_{0}w^{I,\lambda}_{0}$.
Along the same lines as the proof of Lemma \ref{lemma:decomp} we have $v(w^{I}_{0}\lambda)\in W^{I}$ and $v(\lambda)=w''w'\in W^{I}W_{I}$ with $w''=v(w_{0}^{I}\lambda)$ and $w'$ the shortest element in $W_{I}$ that sends $\lambda\in P^{+}_{I}$ to $-P^{+}_{I}$, i.e.~$w'=w^{I}_{0}w^{I,\lambda}_{0}$.
\end{proof}

The description of $P^{+}_{I}$ in Proposition \ref{PplusSteinberg} was based on the Steinberg weights which were used to cover $P^{+}_{I}$ with translates of $P^{+}$. Instead, we can also cover $P$ and $P_{I}^{-}$ with translates of $P^{-}$,
\begin{equation}\label{eq:Pmin}
P=\bigcup_{w\in W}w^{-1}(\mu_{w}-P^{-}),\quad P^{-}_{I}=\bigcup_{w\in W^{I}}w^{-1}(\mu_{w}-P^{-}),
\end{equation}
where $\mu_{w}=-\sum_{\alpha\in\Pi,w^{-1}\alpha<0}\varpi_{\alpha}$. The proof is the same as that of Proposition \ref{PplusSteinberg}.
It follows that for $\lambda\in P^{-}_{I}$ we have $\lambda\in v(\lambda)^{-1}(\mu_{v(\lambda)}-P^{-})$ and $v(\lambda)\in W^{I}$. Moreover
$$\widetilde{\lambda}=\lambda-v(\lambda)^{-1}\rho(k)\in v(\lambda)^{-1}(\mu_{v(\lambda)}-P^{-})\subset P^{-}_{I},$$
from which we conclude that the map $P^{-}_{I}\to P^{-}_{I},\lambda\mapsto\widetilde{\lambda}$ is injective.
However, if $\lambda\in P^{+}_{I}$ then $\widetilde{\lambda}$ need not be in $P^{+}_{I}$ and there is \textit{a priori} no reason why $P^{+}_{I}\to P/W_{I},\lambda\to[\widetilde{\lambda}]$ would be injective.

\begin{proposition}
The polynomials in $S(\lah)^{W_{I}}$ seperate the points $\widetilde{\lambda}$ where $\lambda\in P^{+}_{I}$.
\end{proposition}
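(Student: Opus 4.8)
The plan is to reduce the assertion to a single injectivity statement and then to deduce that statement by transporting the dominant chamber $P^{+}_{I}$ to the antidominant one $P^{-}_{I}$, where injectivity of $\lambda\mapsto\widetilde\lambda$ has already been established above. First I would invoke the standard fact that, for the finite group $W_{I}$ acting linearly on $\lah^{*}$, the invariant ring $S(\lah)^{W_{I}}$ separates $W_{I}$-orbits: given two points in distinct orbits one interpolates a polynomial vanishing on the first orbit and equal to $1$ on the second, and then averages it over $W_{I}$ to obtain a $W_{I}$-invariant polynomial taking distinct values on the two points. Consequently $S(\lah)^{W_{I}}$ separates $\widetilde\lambda$ and $\widetilde\mu$ exactly when these lie in different $W_{I}$-orbits, so it suffices to prove that the map $P^{+}_{I}\to\lah^{*}/W_{I}$, $\lambda\mapsto[\widetilde\lambda]$, is injective.

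Next, for $\lambda\in P^{+}_{I}$ I would set $\lambda'=w^{I}_{0}\lambda$, where $w^{I}_{0}$ is the longest element of $W_{I}$. Since $w^{I}_{0}$ sends $R^{+}_{I}$ to $R^{-}_{I}$ we have $\lambda'\in P^{-}_{I}$, and $w^{I}_{0}$ is an involution. Writing $\widetilde{\lambda'}=\lambda'-v(\lambda')^{-1}\rho(k)=w^{I}_{0}\lambda-v(w^{I}_{0}\lambda)^{-1}\rho(k)$ and applying $w^{I}_{0}$ gives $w^{I}_{0}\widetilde{\lambda'}=\lambda-w^{I}_{0}v(w^{I}_{0}\lambda)^{-1}\rho(k)$. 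Comparing this with the formula of Lemma~\ref{lemma:lambdatilde} yields $\widetilde\lambda=w^{I,\lambda}_{0}\bigl(w^{I}_{0}\widetilde{\lambda'}\bigr)=\bigl(w^{I,\lambda}_{0}w^{I}_{0}\bigr)\widetilde{\lambda'}$. As $w^{I,\lambda}_{0}w^{I}_{0}\in W_{I}$, this exhibits $\widetilde\lambda$ and $\widetilde{\lambda'}$ as members of one and the same $W_{I}$-orbit, i.e.\ $[\widetilde\lambda]=[\widetilde{\lambda'}]$.

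Finally I would conclude as follows. By the discussion preceding the proposition, $\widetilde{\lambda'}\in P^{-}_{I}$ for $\lambda'\in P^{-}_{I}$, and the map $\lambda'\mapsto\widetilde{\lambda'}$ is injective on $P^{-}_{I}$. Suppose $\lambda,\mu\in P^{+}_{I}$ satisfy $[\widetilde\lambda]=[\widetilde\mu]$, and put $\lambda'=w^{I}_{0}\lambda$, $\mu'=w^{I}_{0}\mu\in P^{-}_{I}$. The previous step gives $[\widetilde{\lambda'}]=[\widetilde\lambda]=[\widetilde\mu]=[\widetilde{\mu'}]$ with $\widetilde{\lambda'},\widetilde{\mu'}\in P^{-}_{I}$. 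Because $P^{+}_{I}$ is a strict fundamental domain for $W_{I}$ and $w^{I}_{0}\in W_{I}$, its image $P^{-}_{I}=w^{I}_{0}\cdot P^{+}_{I}$ is again a strict fundamental domain, so two of its elements lying in the same $W_{I}$-orbit must coincide; hence $\widetilde{\lambda'}=\widetilde{\mu'}$, and injectivity on $P^{-}_{I}$ forces $\lambda'=\mu'$, whence $\lambda=\mu$.

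The only genuine subtlety lies in the second step: one must verify that the formula of Lemma~\ref{lemma:lambdatilde} realizes $\widetilde\lambda$ precisely as a $W_{I}$-translate of $\widetilde{w^{I}_{0}\lambda}$, which is what couples the awkward behaviour of $\widetilde\lambda$ on $P^{+}_{I}$ (where $\widetilde\lambda$ need not be antidominant) to the clean injectivity on $P^{-}_{I}$. Everything else is the standard separation-of-orbits principle together with the fundamental-domain property of $P^{-}_{I}$.
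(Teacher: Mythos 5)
Your proposal is correct and takes essentially the same route as the paper: both use Lemma~\ref{lemma:lambdatilde} to exhibit $\widetilde\lambda$ as a $W_{I}$-translate of $w^{I}_{0}\widetilde{w^{I}_{0}\lambda}$, reduce to the injectivity of $\lambda\mapsto\widetilde\lambda$ on $P^{-}_{I}$ established just before the proposition, and conclude by the separation of $W_{I}$-orbits (equivalently, of points of a strict fundamental domain) by $W_{I}$-invariant polynomials. Your version merely makes explicit two points the paper leaves implicit, namely the averaging argument for orbit separation and the verification that $P^{-}_{I}=w^{I}_{0}\cdot P^{+}_{I}$ is again a strict fundamental domain.
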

\begin{proof}
The map $P^{+}_{I}\to P^{+}_{I},\lambda\mapsto\lambda-w^{I}_{0}v(w^{I}_{0}\lambda)^{-1}\rho(k)$ is the $w^{I}_{0}$-conjugation of the injective map $P_{I}^{-}\to P_{I}^{-},\lambda\mapsto\widetilde{\lambda}$ and hence injective. 
Let $\lambda\in P^{+}_{I}$ and $q\in S(\lah)^{W_{I}}$. We have $$q(\widetilde{\lambda})=q(\lambda-w_{0}^{I}v(w_{0}^{I}\lambda)^{-1}\rho(k))$$
by Lemma \ref{lemma:lambdatilde}. Since $S(\lah)^{W_{I}}$ separates the points of $P^{+}_{I}$, it certainly separates the points of a subset.
\end{proof}

\begin{corollary}\label{cor: ortho p}
The Jacobi polynomials are pairwise orthogonal.
\end{corollary}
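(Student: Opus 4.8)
The plan is to combine the eigenfunction property of the Jacobi polynomials established just above with the separation result in the preceding proposition, via a standard argument for orthogonal polynomials defined by a commuting family of symmetric operators. First I would recall that each $p_{I}(\lambda,k)$, for $\lambda\in P^{+}_{I}$, is an eigenfunction of every $D_{I,q}$ with $q\in S(\lah)^{W_{I}}$, with eigenvalue $q(\widetilde{\lambda})$, and that each $D_{I,q}$ is symmetric with respect to $(\cdot,\cdot)_{k}$. These two facts are exactly what the section has set up.

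Next I would take two distinct labels $\lambda,\mu\in P^{+}_{I}$. By the preceding proposition, the polynomials in $S(\lah)^{W_{I}}$ separate the points $\widetilde{\lambda}$ and $\widetilde{\mu}$, so there exists $q\in S(\lah)^{W_{I}}$ with $q(\widetilde{\lambda})\ne q(\widetilde{\mu})$. Then the symmetry of $D_{I,q}$ gives
$$
q(\widetilde{\lambda})(p_{I}(\lambda,k),p_{I}(\mu,k))_{k}
=(D_{I,q}p_{I}(\lambda,k),p_{I}(\mu,k))_{k}
=(p_{I}(\lambda,k),D_{I,q}p_{I}(\mu,k))_{k}
=\overline{q(\widetilde{\mu})}(p_{I}(\lambda,k),p_{I}(\mu,k))_{k}.
$$
Since the eigenvalues $q(\widetilde{\lambda})$ arise from evaluating a polynomial at real-shifted weights and $q$ may be taken real-valued on $\lah$, one checks the eigenvalues are real, so $\overline{q(\widetilde{\mu})}=q(\widetilde{\mu})$; hence $\bigl(q(\widetilde{\lambda})-q(\widetilde{\mu})\bigr)(p_{I}(\lambda,k),p_{I}(\mu,k))_{k}=0$, and as the scalar factor is nonzero we conclude $(p_{I}(\lambda,k),p_{I}(\mu,k))_{k}=0$.

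The one point requiring a little care — and the natural place an obstacle could hide — is ensuring the eigenvalue separation can be realized by a \emph{symmetric} operator, i.e. that the separating $q$ yields a genuinely usable symmetry identity. Here the work is already done for us: the Cherednik operators $D_{\xi}(k)$ are symmetric, they commute, and $q(D(k))$ restricted to $\bbC[P]^{W_{I}}$ is $D_{I,q}$, which the text notes is symmetric for every $q\in S(\lah)^{W_{I}}$. So the separation proposition and the symmetry assertion plug together directly, and no further structural input is needed beyond observing that the eigenvalues are real (equivalently, that conjugation does not disturb the computation). This is precisely the argument used by Heckman and Opdam in the extreme cases $I=\emptyset$ and $I=S$, now available for all intermediate $I$ because both ingredients have been supplied.
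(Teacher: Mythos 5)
Your argument is exactly the paper's: the polynomials $p_{I}(\lambda,k)$ are simultaneous eigenfunctions of the symmetric operators $D_{I,q}$ with eigenvalues $q(\widetilde{\lambda})$, and the separating proposition supplies a $q$ distinguishing any two labels, forcing orthogonality. The paper states this in one sentence; your only addition is the (correct and routine) check that the eigenvalues can be taken real so the conjugate causes no trouble.
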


\begin{proof}
The polynomials $p_{I}(\lambda)$ are eigenfunctions of the invariant operators $D_{I,q},q\in S(\lah)^{W_{I}}$ with eigenvalues $q(\widetilde{\lambda})$ and for each pair $p_{I}(\lambda)\ne p_{I}(\lambda')$ with $\lambda,\lambda'\in P^{+}_{I}$ there exists a polynomial $q\in S(\lah)^{W_{I}}$ with $q(\widetilde{\lambda})\ne q(\widetilde{\lambda'})$.  
\end{proof}
%

\begin{remark}\label{remark:other gens}
In view of the description \eqref{eq:Pmin} of $P^{-}_{I}$ we obtain an alternative description of $P_{I}^{+}$, namely
$$P_{I}^{+}=\bigcup_{w\in W^{I}}w^{I}_{0} w^{-1}(\mu_{w}-P^{-}).$$
From this we obtain alternative Steinberg weights and in turn alternative generators for the $\bbC[P]^{W}$-module $\bbC[P]^{W_{I}}$. For example, in type $A_{2}$ we see that the generators from Example \ref{exampleGenerators} may be replaced by
$1,e^{\varpi_{1}-\varpi_{2}}+e^{\varpi_{2}}$ and $e^{\varpi_{1}}$.
\end{remark}

\section{Vector-valued Jacobi polynomials}\label{s:vvJacobi}
The vector space $\bbC[W/W_{I}]$ is generated by functions $\delta_{wW_{I}}:W/W_{I}\to \bbC$ (the characteristic function of $wW_{I}$ on $W/W_{I}$). We order the basis elements $\delta_{uW_{I}}$ in such away that $\delta_{uW_{I}}$ comes before $\delta_{vW_{I}}$ if $\ell(u'')<\ell(v'')$, where $u=u''u', v=v''v'\in W^{I}W_{I}$.

The space $\bbC[P]\otimes\bbC[W/W_{I}]$ of $\bbC[W/W_{I}]$-valued Laurent polynomials caries the diagonal $W$-action $w(\phi\otimes\delta_{uW_{I}})=(w\phi)\otimes\delta_{wuW_{I}}$. The space of $W$-invariants for this action,
$$\left(\bbC[P]\otimes\bbC[W/W_{I}]\right)^{W},$$
is a module over $\bbC[P]^{W}$ by $\chi(\sum \phi_{u}\otimes\delta_{uW_{I}})=\sum (\chi \phi_{u})\otimes\delta_{uW_{I}}$ for $\chi\in\bbC[P]^{W}$.

\begin{lemma}\label{lemma:GammaIso}
The map $$\Gamma:\bbC[P]^{W_{I}}\to\left(\bbC[P]\otimes\bbC[W/W_{I}]\right)^{W},\Gamma(\phi)=\sum_{u\in W^{I}}u(\phi\otimes\delta_{eW_{I}})$$
is an isomorphism of $\bbC[P]^{W}$-modules.
\end{lemma}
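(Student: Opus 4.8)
The plan is to exhibit $\Gamma$ as an isomorphism by constructing an explicit inverse and checking $\bbC[P]^{W}$-linearity, which is visibly built into the formula. First I would verify that $\Gamma$ lands in the invariants: for $w\in W$, applying $w$ to $\sum_{u\in W^{I}}u(\phi\otimes\delta_{eW_{I}})$ merely permutes the summands provided $\phi$ is $W_{I}$-invariant. Indeed, writing $wu=u''u'\in W^{I}W_{I}$, the diagonal action gives $wu(\phi\otimes\delta_{eW_{I}})=(wu\phi)\otimes\delta_{wuW_{I}}=(u''u'\phi)\otimes\delta_{u''W_{I}}=u''(\phi\otimes\delta_{eW_{I}})$, where the last step uses $u'\phi=\phi$ since $u'\in W_{I}$. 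As $u$ ranges over $W^{I}$ and $u''=(wu)^{I}$ is the $W^{I}$-part, this is a bijection of the summands, so $w\Gamma(\phi)=\Gamma(\phi)$. The $\bbC[P]^{W}$-linearity is immediate because $\chi\in\bbC[P]^{W}$ commutes with the diagonal $W$-action.

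Next I would write down the inverse explicitly. Any element of $\left(\bbC[P]\otimes\bbC[W/W_{I}]\right)^{W}$ has the form $F=\sum_{u\in W^{I}}\phi_{u}\otimes\delta_{uW_{I}}$ for some $\phi_{u}\in\bbC[P]$, since $\{\delta_{uW_{I}}\mid u\in W^{I}\}$ is a basis of $\bbC[W/W_{I}]$. The $W$-invariance of $F$ forces relations among the $\phi_{u}$: applying $w$ and comparing the coefficient of $\delta_{eW_{I}}$ shows that $\phi_{e}$ is $W_{I}$-invariant and that every other $\phi_{u}$ is determined by $\phi_{e}$ via $\phi_{u}=u\phi_{e}$. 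Concretely, the coefficient of $\delta_{uW_{I}}$ in $wF$ is $w\phi_{w^{-1}u \cdot(\ldots)}$; taking $w=u''$ with $u=u''$ the shortest coset representative yields $\phi_{u}=u\phi_{e}$, and taking $w\in W_{I}$ with $u=e$ yields $u'\phi_{e}=\phi_{e}$ for all $u'\in W_{I}$. Hence the map $\Gamma^{-1}(F)=\phi_{e}\in\bbC[P]^{W_{I}}$ is well-defined, and $\Gamma\circ\Gamma^{-1}=\mathrm{id}$, $\Gamma^{-1}\circ\Gamma=\mathrm{id}$ follow by matching the coefficient of $\delta_{eW_{I}}$ (which in $\Gamma(\phi)$ is exactly $\phi$, coming from the $u=e$ summand, no other $u\in W^{I}$ contributing to the $\delta_{eW_{I}}$-component).

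The one point requiring care, and the main obstacle, is the bookkeeping of cosets: I must confirm that the only $u\in W^{I}$ with $uW_{I}=eW_{I}$ is $u=e$, so that the $\delta_{eW_{I}}$-coefficient of $\Gamma(\phi)$ is unambiguously $\phi$ and the inverse is well-defined. This is exactly the statement that $W^{I}$ is a system of representatives for $W/W_{I}$ (the shortest-element representatives), which is recalled in the introduction. Given that, the decomposition $W=W^{I}W_{I}$ is unique, the reindexing $u\mapsto(wu)^{I}$ is a genuine bijection of $W^{I}$, and all the coefficient comparisons above are unambiguous. With these combinatorial facts in hand the verification is routine, so the substance of the proof is the invariance computation and the identification $\phi_{u}=u\phi_{e}$, both of which reduce to the $W^{I}W_{I}$ factorization and the $W_{I}$-invariance of $\phi$.
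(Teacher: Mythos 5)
Your proof is correct and follows essentially the same route as the paper's: invariance via the factorization $W=W^{I}W_{I}$, injectivity by reading off the $\delta_{eW_{I}}$-component, and surjectivity by showing an invariant $F=\sum_{u}\phi_{u}\otimes\delta_{uW_{I}}$ satisfies $\phi_{u}=u\phi_{e}$ with $\phi_{e}$ being $W_{I}$-invariant. You simply spell out the coset bookkeeping that the paper leaves implicit.
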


\begin{proof}
Note that $\Gamma(\phi)$ is $W$-invariant because the sum can be taken over $W=W^{I}W_{I}$. The first component of $\Gamma(\phi)$ is $\phi$, so $\Gamma$ is injective. If $\Phi=\sum \phi_{u}\otimes\delta_{uW_{I}}\in \left(\bbC[P]\otimes\bbC[W/W']\right)^{W}$, then $\phi_{e}$ is $W_{I}$-invariant, hence $\Phi=\Gamma(\phi_{e})$. We conclude that $\Gamma$ is a vector space isomorphism and it clearly is also an isomorphism of $\bbC[P]^{W}$-modules.
\end{proof}

The space $\bbC[P]\otimes\bbC[W/W_{I}]$ is also endowed with a Hermitean inner product, denoted by $(\cdot,\cdot)_{I,k}$ for $k_{\alpha}\ge0$, defined by
$$(\Phi,\Psi)_{I,k}=\int_{T}\tr(\overline{\Phi(t)}\Psi(t))|\delta_{k}(t)|dt,\quad\Phi,\Psi\in\bbC[P]\otimes\bbC[W/W_{I}],$$
where $\tr(\sum_{v\in W^{I}}\Phi_{v}\otimes\delta_{vW_{I}})=\sum_{v\in W^{I}}\Phi_{v}$ is just the sum of the components. Note that $(\cdot,\cdot)_{I,k}$ is $W$-invariant.
If $\Phi,\Psi\in \left(\bbC[P]\otimes\bbC[W/W_{I}]\right)^{W}$, then $\Phi=\Gamma(\phi)$ and $\Psi=\Gamma(\psi)$ for some $\phi,\psi\in\bbC[P]^{W_{I}}$ and
$$\tr(\overline{\Phi}\Psi)|\delta_{k}|=\sum_{v\in W^{I}}v\left(\overline{\phi}\psi\right)|\delta_{k}|$$
is $W$-invariant because $|\delta_{k}|$ is $W$-invariant and $\overline{\phi}\psi$ is $W_{I}$-invariant. Hence
$(\Phi,\Psi)_{I,k}=|W^{I}|(\phi,\psi)_{k}$ and we see that $\Gamma$ respects the orthogonality.
Let $P_{I}(\lambda,k)=\Gamma(p_{I}(\lambda,k))$.

\begin{proposition}
The family $\{P_{I}(\lambda,k)\mid\lambda\in P_{I}^{+}\}$ is an orthogonal basis of the vector space $\left(\bbC[P]\otimes\bbC[W/W_{I}]\right)^{W}$. 
\end{proposition}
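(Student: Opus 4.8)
The plan is to transport the orthogonal basis $\{p_{I}(\lambda,k)\mid\lambda\in P_{I}^{+}\}$ of $\bbC[P]^{W_{I}}$ across the isomorphism $\Gamma$ of Lemma \ref{lemma:GammaIso}, exploiting the comparison of inner products established immediately before the statement. Since $P_{I}(\lambda,k)=\Gamma(p_{I}(\lambda,k))$ by definition, the two properties to verify---spanning/independence and orthogonality---can each be pulled back to the corresponding (already known) fact for the scalar Jacobi polynomials $p_{I}(\lambda,k)$.

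First I would check that the $p_{I}(\lambda,k)$ with $\lambda\in P_{I}^{+}$ form a vector space basis of $\bbC[P]^{W_{I}}$. Indeed, by definition $p_{I}(\lambda,k)=\sum_{\mu\le_{I}\lambda}c_{\lambda,\mu}m_{I}(\mu)$ with $c_{\lambda,\lambda}=1$, so the transition from the basis $\{m_{I}(\mu)\}$ is unitriangular with respect to the partial order $\le_{I}$ (whose down-sets $\{\mu\in P_{I}^{+}\mid\mu\le_{I}\lambda\}$ are finite), and hence invertible; spanning and linear independence both follow. Because $\Gamma$ is an isomorphism of vector spaces, the images $P_{I}(\lambda,k)=\Gamma(p_{I}(\lambda,k))$ therefore constitute a basis of $\left(\bbC[P]\otimes\bbC[W/W_{I}]\right)^{W}$.

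It then remains to verify orthogonality. The identity $(\Gamma(\phi),\Gamma(\psi))_{I,k}=|W^{I}|(\phi,\psi)_{k}$, established just above the statement, gives
\[
(P_{I}(\lambda,k),P_{I}(\lambda',k))_{I,k}=|W^{I}|\,(p_{I}(\lambda,k),p_{I}(\lambda',k))_{k}
\]
for all $\lambda,\lambda'\in P_{I}^{+}$. By Corollary \ref{cor: ortho p} the right-hand side vanishes whenever $\lambda\ne\lambda'$, so the family $\{P_{I}(\lambda,k)\}$ is orthogonal; being also a basis, it is an orthogonal basis.

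I do not expect a genuine obstacle here: all the substance has been front-loaded into the isomorphism $\Gamma$ (Lemma \ref{lemma:GammaIso}), the inner-product comparison $(\Gamma(\phi),\Gamma(\psi))_{I,k}=|W^{I}|(\phi,\psi)_{k}$, and the orthogonality of the scalar polynomials (Corollary \ref{cor: ortho p}). The only point worth spelling out is that the $p_{I}(\lambda,k)$ already form a basis, which is immediate from their unitriangular expansion in the $m_{I}(\mu)$ (and is in any case forced by orthogonality together with spanning).
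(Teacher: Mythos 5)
Your proof is correct and follows essentially the same route as the paper, whose own proof simply cites Corollary \ref{cor: ortho p} and Lemma \ref{lemma:GammaIso}; you merely spell out the details (the unitriangular expansion of $p_{I}(\lambda,k)$ in the $m_{I}(\mu)$ and the inner-product comparison under $\Gamma$) that the paper leaves implicit.
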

\begin{proof}
This follows immediately from Corollary \ref{cor: ortho p} and Lemma \ref{lemma:GammaIso}.
\end{proof}

The action of a differential-reflection operator $D_{I,q}$ ($q\in S(\lah)^{W_{I}}$) on $\bbC[P]^{W_{I}}$ is transferred to an action on $\bbC[P]\otimes\bbC[W/W_{I}]$ by the the push-forward $\gamma=\Gamma_{*}$,
\begin{equation}\label{gamma}
\gamma(D_{I,q})=\sum_{u\in W^{I}}u\circ D_{I,q}\circ u^{-1}\otimes\delta_{uW_{I}}.
\end{equation}
We want to show that the restriction of $\gamma(D_{I,q})$ to $(\bbC[P]\otimes\bbC[W/W_{I}])^{W}$ is the restriction of a differential operator on $\bbC[P]\otimes\bbC[W/W_{I}]$ to the space of $W$-invariants. Note that $\gamma(D_{I,q})$ is not well defined since it depends on the choice of coset representatives in $W/W_{I}$. However, the restriction to $(\bbC[P]\otimes\bbC[W/W_{I}])^{W}$ is well defined.

Following \cite[Ch.1]{MR1313912}, let $\calR=\calR(R)$ be the unital algebra generated by $\frac{1}{1-e^{-\alpha}}, \alpha\in R_{+}$ and
$$\bbD_{I,\calR}=\calR\otimes\End(\bbC[W/W_{I}])\otimes S(\lah)$$
the algebra of differential operators on $\bbC[P]\otimes\bbC[W/W_{I}]$ with coefficients in the algebra $\calR\otimes\End(\bbC[W/W_{I}])$. Denote by $\pi_{I}:W\to\GL(\bbC[W/W_{I}])$ the representation of $W$ on $\bbC[W/W_{I}]$.
The action of $W$ on $\bbD_{I,\calR}$ is given by $w(f\otimes A\otimes \partial_{q})=(wf)\otimes(\pi_{I}(w)A)\otimes\partial_{wq}$ on simple tensors. Define
$$\bbD_{I,\calR}^{W}=\{D\in\bbD_{I,\calR}\mid w(D)=D \mbox{ for all $w\in W$}\},$$
an algebra of $W$-invariant differential operators on the space of $\bbC[W/W_{I}]$-valued Laurent polynomials. 
Let
$$\bbD R_{I,\calR}=\bbD_{I,\calR}\otimes\bbC[W]$$
and define for $D,E\in\bbD_{I,\calR}$ and $v,w\in W$ the multiplication
$$(D\otimes w)\cdot(E\otimes v)=(D\circ w(E))\otimes wv,$$
which gives $\bbD R_{I,\calR}$ the structure of an associative algebra.
The action of an element $D\otimes w\in \bbD R_{I,\calR}$ on $\Phi\in\bbC[P]\otimes\bbC[W/W_{I}]$ is given by $D(w\Phi)$.
For an element $P\in\bbD R_{I,\calR}$ of the form $P=\sum_{w\in W}P_{w}\otimes w$ we define the linear map $\beta:\bbD R_{I,\calR}\to \bbD_{I,\calR}$ by $\beta(P)=\sum_{w\in W}P_{w}$. Finally, define by $\bbD R_{I,\calR}^{1\otimes\bbC[W]}$ the commutator of $1\otimes\bbC[W]$ in $\bbD R_{I,\calR}$.
\begin{lemma}
The restriction $\beta:\bbD R_{I,\calR}^{1\otimes\bbC[W]}\to\bbD_{I,\calR}$ is a homomorphism with image in $\bbD_{I,\calR}^{W}$.
\end{lemma}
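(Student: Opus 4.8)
The plan is to convert the abstract centralizer condition into an explicit equivariance condition on components and then read off both assertions by direct computation. Write $P=\sum_{x\in W}P_{x}\otimes x$ and $Q=\sum_{y\in W}Q_{y}\otimes y$ with $P_{x},Q_{y}\in\bbD_{I,\calR}$. First I would unwind what it means for $P$ to lie in $\bbD R_{I,\calR}^{1\otimes\bbC[W]}$. Using the multiplication rule $(D\otimes w)\cdot(E\otimes v)=(D\circ w(E))\otimes wv$ together with the fact that the identity operator is $W$-invariant, one computes
$$(1\otimes w)\cdot P=\sum_{x}w(P_{x})\otimes wx,\qquad P\cdot(1\otimes w)=\sum_{x}P_{x}\otimes xw.$$
Comparing the coefficient of a fixed group element in the two expressions (substituting the summation index accordingly) shows that $P$ commutes with every $1\otimes w$ if and only if
$$w(P_{x})=P_{wxw^{-1}}\quad\text{for all }w,x\in W,$$
a conjugation-equivariance of the components which I will call $(\ast)$; this is the engine behind the whole lemma.

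Next I would show that $\beta$ carries the centralizer into $\bbD_{I,\calR}^{W}$. For $P$ satisfying $(\ast)$ and any $w\in W$,
$$w(\beta(P))=\sum_{x}w(P_{x})=\sum_{x}P_{wxw^{-1}}=\beta(P),$$
where the last equality holds because $x\mapsto wxw^{-1}$ permutes $W$. Hence $\beta(P)$ is $W$-invariant, i.e. $\beta(P)\in\bbD_{I,\calR}^{W}$.

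It remains to prove multiplicativity, and this is where the invariance just obtained does the decisive work. Expanding $PQ=\sum_{x,y}(P_{x}\circ x(Q_{y}))\otimes xy$ in $\bbD R_{I,\calR}$ and applying $\beta$ (which sums all group components), a reindexing gives
$$\beta(PQ)=\sum_{x,y}P_{x}\circ x(Q_{y})=\sum_{x}P_{x}\circ x\!\left(\beta(Q)\right).$$
Now I invoke the previous step: since $Q$ lies in the centralizer, $\beta(Q)\in\bbD_{I,\calR}^{W}$, so $x(\beta(Q))=\beta(Q)$ for every $x$. Pulling $\beta(Q)$ out of the sum yields $\beta(PQ)=\beta(P)\circ\beta(Q)$, and since $\beta(1\otimes e)=1\in\bbD_{I,\calR}^{W}$, the map $\beta$ is a unital algebra homomorphism on the centralizer.

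The main obstacle I anticipate is exactly the multiplicativity: on all of $\bbD R_{I,\calR}$ the map $\beta$ is \emph{not} a homomorphism, because the crossed-product twist $x(Q_{y})$ prevents the components from multiplying naively. The content of the argument is that the centralizer condition $(\ast)$ forces $\beta(Q)$ to be $W$-invariant, which is precisely what allows the twist to be discarded after summing over $y$. I would also be careful to confirm that the $W$-action appearing in the crossed-product multiplication is the same action that defines $\bbD_{I,\calR}^{W}$, so that the invariance of $\beta(Q)$ may be applied verbatim.
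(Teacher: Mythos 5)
Your proof is correct, and it is exactly the argument the paper delegates to the citation of \cite[Lem.~1.2.1, Lem.~1.2.2]{MR1313912}: the centralizer condition is rewritten as the conjugation-equivariance $w(P_{x})=P_{wxw^{-1}}$, which gives $W$-invariance of $\beta(P)$ by reindexing, and that invariance is precisely what lets the crossed-product twist $x(Q_{y})$ be absorbed so that $\beta$ becomes multiplicative on the commutant. The only point worth stating explicitly (which you flag) is that the $W$-action in the multiplication rule is the action by algebra automorphisms defining $\bbD_{I,\calR}^{W}$, so that linearity and the identity $x(\beta(Q))=\beta(Q)$ apply verbatim.
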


\begin{proof}
The statement for $I=S$ is shown in \cite[Lem.1.2.1,Lem.1.2.2]{MR1313912} and the general case follows \textit{mutatis mutandis} from these considerations. 
\end{proof}

The homomorphism \eqref{gamma} maps $S'(\lah)^{W_{I}}$ into $\bbD R_{I,\calR}^{1\otimes\bbC[W]}$.
We obtain an algebra homomorphism $\beta\circ\gamma:S'(\lah)^{W_{I}}\to\bbD_{I,\calR}^{W}$. The image of $\beta\circ\gamma$ is the algebra of differential operators restricted to $(\bbC[P]\otimes\bbC[W/W_{I}])^{W}$ and we denote it by
$$\bbD(I,k)=\beta(\gamma(S'(\lah)^{W_{I}})).$$
Note that
$$\beta(\gamma(D_{I,q})))(\Gamma(\phi))=\gamma(D_{I,q}))\Gamma(\phi)=\Gamma(D_{I,q}(\phi))$$ for $\phi\in\bbC[P]^{W_{I}}$, from which it follows that $\beta\circ\gamma$ is injective.
\begin{lemma}
The algebra $\bbD(I,k)$ is a commutative algebra of differential operators on $\left(\bbC[P]\otimes\bbC[W/W_{I}]\right)^{W}$. The basis elements $P(\lambda,k)$ are determined uniquely (up to scaling) as simultaneous eigenfunctions of $\bbD(I,k)$.
\end{lemma}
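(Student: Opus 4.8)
The plan is to deduce both assertions by transporting, through the intertwining isomorphism $\Gamma$, facts already established for the scalar Jacobi polynomials $p_{I}(\lambda,k)$ and the differential-reflection operators $D_{I,q}$ on $\bbC[P]^{W_{I}}$. For commutativity I would first note that $S'(\lah)$ is commutative, since it is generated by the pairwise commuting Cherednik operators $D_{\xi}(k)$; hence the subalgebra $S'(\lah)^{W_{I}}$ is commutative as well. As $\beta\circ\gamma\colon S'(\lah)^{W_{I}}\to\bbD_{I,\calR}^{W}$ is an algebra homomorphism, its image $\bbD(I,k)$ is a commutative algebra. That these operators restrict to honest operators on $\left(\bbC[P]\otimes\bbC[W/W_{I}]\right)^{W}$, with the restriction still a homomorphism, is exactly the recorded identity $\beta(\gamma(D_{I,q}))(\Gamma(\phi))=\Gamma(D_{I,q}(\phi))$. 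The same identity also gives a completely transparent alternative: via the $\bbC[P]^{W}$-module isomorphism $\Gamma$ of Lemma~\ref{lemma:GammaIso}, each operator in $\bbD(I,k)$ is conjugate to $D_{I,q}$ acting on $\bbC[P]^{W_{I}}$, and the latter commute because they are restrictions of the commuting operators $q(D(k))$.

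For the eigenfunction property I would simply push the scalar eigenvalue equation through $\Gamma$. Starting from $D_{I,q}(p_{I}(\lambda,k))=q(\widetilde{\lambda})\,p_{I}(\lambda,k)$ and applying $\Gamma$, the intertwining relation together with the $\bbC$-linearity of $\Gamma$ yields
$$\beta(\gamma(D_{I,q}))\bigl(P_{I}(\lambda,k)\bigr)=\Gamma\bigl(D_{I,q}(p_{I}(\lambda,k))\bigr)=q(\widetilde{\lambda})\,P_{I}(\lambda,k).$$
Thus each $P_{I}(\lambda,k)$ is a simultaneous eigenfunction of $\bbD(I,k)$, with the eigenvalue system $q\mapsto q(\widetilde{\lambda})$ as $q$ ranges over $S(\lah)^{W_{I}}$.

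For uniqueness up to scaling I would exploit that $\{P_{I}(\lambda,k)\mid\lambda\in P_{I}^{+}\}$ is a basis of $\left(\bbC[P]\otimes\bbC[W/W_{I}]\right)^{W}$. Given any simultaneous eigenfunction $\Phi$, expand $\Phi=\sum_{\lambda}c_{\lambda}P_{I}(\lambda,k)$; applying a generic operator $\beta(\gamma(D_{I,q}))$ and comparing coefficients in this basis forces $c_{\lambda}\,q(\widetilde{\lambda})=\mu(q)\,c_{\lambda}$ for every $q$, where $\mu(q)$ denotes the eigenvalue of $\Phi$. Hence any two indices $\lambda\ne\lambda'$ with $c_{\lambda},c_{\lambda'}\ne0$ would satisfy $q(\widetilde{\lambda})=q(\widetilde{\lambda'})$ for all $q\in S(\lah)^{W_{I}}$, which is precisely excluded by the Proposition asserting that $S(\lah)^{W_{I}}$ separates the points $\widetilde{\lambda}$, $\lambda\in P_{I}^{+}$. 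Therefore at most one coefficient is nonzero and $\Phi$ is a scalar multiple of a single $P_{I}(\lambda,k)$.

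The only substantive input is the separation of the spectral points $\widetilde{\lambda}$, which has already been proved; everything else is assembly of the homomorphism property of $\beta\circ\gamma$, the intertwining identity for $\Gamma$, and the known eigenvalue equation for the $p_{I}(\lambda,k)$. Consequently I do not anticipate a genuinely hard step here: the mild point requiring care is merely the coefficient bookkeeping that matches the eigenvalue system of $\Phi$ with that of each surviving basis vector, and this is immediate once the $P_{I}(\lambda,k)$ are known to form a basis.
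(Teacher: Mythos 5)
Your proof is correct and is precisely the argument the paper intends: the paper in fact states this lemma without proof, leaving it as an immediate consequence of the commutativity of the Cherednik operators, the homomorphism property of $\beta\circ\gamma$, the intertwining identity $\beta(\gamma(D_{I,q}))\Gamma(\phi)=\Gamma(D_{I,q}\phi)$, the eigenvalue equation for the $p_{I}(\lambda,k)$, and the separation-of-points proposition. Your assembly of these ingredients, including the coefficient argument for uniqueness via the basis property of the $P_{I}(\lambda,k)$, supplies exactly the omitted details.
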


\begin{example}\label{example:shimenoPrep}
Consider $\bbR^{3}$ with the standard basis $(e_{1},e_{2},e_{3})$ and with the standard inner product. Let $(\eps_{1},\eps_{2},\eps_{3})$ be the basis of $(\bbR^{3})^{*}$ dual to the standard basis of $\bbR^{3}$. The orthocomplement of $e_{1}+e_{2}+e_{3}$ is denoted by $\laa$. 
Let $\Sigma=\{\pm(\eps_{1}-\eps_{2}),\pm(\eps_{1}-\eps_{3}),\pm(\eps_{2}-\eps_{3})\}$ be the root system of type $A_{2}$. The roots are denoted by $\alpha_{ij}=\eps_{i}-\eps_{j}$ and the reflections by $s_{ij}$. Note that $W=S_{3}$, the permutation group of $\{1,2,3\}$. We fix the parabolic subgroup $W'=<s_{23}>$. We identify the $W$-module $\bbC[W/W']$ with $\bbC^{3}$ on which $W$ acts by permuting the standard basis vectors.    

Let $\xi_{1}=\frac{1}{3}(2,-1,-1), \xi_{2}=\frac{1}{3}(-1,2,-1)$ and $\xi_{3}=\frac{1}{3}(-1,-1,2)$ be elements of $\lah=\laa\otimes\bbC$. Then
$$S(\lah)^{W'}=\bbC[\xi_{1},\xi_{1}\xi_{2}+\xi_{1}\xi_{3}+\xi_{2}\xi_{3}]$$
with $\xi_{1}$ and $\xi_{1}\xi_{2}+\xi_{1}\xi_{3}+\xi_{2}\xi_{3}$ algebraically independent. Our aim is to calculate the differential operators acting on the vector-valued Laurent polynomials that correspond to these two generators. In view of the relation with the spherical functions in Section \ref{s:AppI} it is convenient to consider the root system $R=2\Sigma$ with $R_{+}=\{2\alpha_{12},2\alpha_{13},2\alpha_{23}\}$ the set of positive roots and $\Pi=\{2\alpha_{12},2\alpha_{23}\}$ the set of simple roots. Accordingly $\Sigma_{+}=\frac{1}{2}R_{+}$.

We proceed to calculate $\beta(\gamma(\xi_{1}))$ and $\beta(\gamma(\xi_{1}\xi_{2}+\xi_{1}\xi_{3}+\xi_{2}\xi_{3}))$.
Let $p_{1}\in\bbC[2P]^{W'}$ and $p_{2}=s_{12}p_{1},p_{3}=s_{13}p_{1}$. Then $(p_{1},p_{2},p_{3})\in\left(\bbC[P]\otimes\bbC^{3}\right)^{W}$ and 
$$D_{\xi_{1}}(k)(p_{1})=\del_{\xi_{1}}p_{1}+2k\left(\frac{1}{1-e^{-2\alpha_{12}}}(p_{1}-p_{2})+\frac{1}{1-e^{-2\alpha_{13}}}(p_{1}-p_{3})\right)-2kp_{1}.$$
It is now easy to obtain expressions for $s_{12}D_{\xi_{1}}(k)(p_{1}),s_{13}D_{\xi_{1}}(k)(p_{1})$ and we get
\begin{multline*}
\beta(\gamma(\xi_{1}))=
\begin{pmatrix}
\del_{\xi_{1}}&0&0\\
0&\del_{\xi_{2}}&0\\
0&0&\del_{\xi_{3}}
\end{pmatrix}+\\
k
\begin{pmatrix}
\frac{e^{\alpha_{12}}+e^{-\alpha_{12}}}{e^{\alpha_{12}}-e^{-\alpha_{12}}}+\frac{e^{\alpha_{13}}+e^{-\alpha_{13}}}{e^{\alpha_{13}}-e^{-\alpha_{13}}}&-\frac{2}{1-e^{-2\alpha_{12}}}&-\frac{2}{1-e^{-2\alpha_{13}}}\\
-\frac{2}{1-e^{2\alpha_{12}}}&-\frac{e^{\alpha_{12}}+e^{-\alpha_{12}}}{e^{\alpha_{12}}-e^{-\alpha_{12}}}+\frac{e^{\alpha_{23}}+e^{-\alpha_{23}}}{e^{\alpha_{23}}-e^{-\alpha_{23}}}&-\frac{2}{1-e^{-2\alpha_{23}}}\\
-\frac{2}{1-e^{2\alpha_{13}}}&-\frac{2}{1-e^{2\alpha_{23}}}&-\frac{e^{\alpha_{13}}+e^{-\alpha_{13}}}{e^{\alpha_{13}}-e^{-\alpha_{13}}}-\frac{1e^{\alpha_{23}}+e^{-\alpha_{23}}}{e^{\alpha_{23}}-e^{-\alpha_{23}}}
\end{pmatrix}
\end{multline*}
where we have used the identity $\frac{2}{1-e^{-2\alpha}}-1=\frac{e^{\alpha}+e^{-\alpha}}{e^{\alpha}-e^{-\alpha}}$. To calculate $\beta(\gamma(\xi_{1}\xi_{2}+\xi_{1}\xi_{3}+\xi_{2}\xi_{3}))$ we introduce one further piece of notation. Upon identifying $\laa\to\laa^{*},\xi\mapsto(\xi,\cdot)$, we denote by $\lambda\mapsto \lambda'$ the inverse of this map. In particular $\alpha_{ij}'=e_{i}-e_{j}$.
A tedious calculation along the lines of \cite[Prop.1.2.3]{MR1313912} yields
\begin{multline*}
D_{\xi_{1}\xi_{2}+\xi_{1}\xi_{3}+\xi_{2}\xi_{3}}=
\del_{\xi_{1}}\del_{\xi_{2}} + \del_{\xi_{1}}\del_{\xi_{3}} + \del_{\xi_{2}}\del_{\xi_{3}}+\\
-k\sum_{\alpha\in\Sigma_{+}}
\left\{
\frac{1+e^{-2\alpha}}{1-e^{-2\alpha}}\del_{\alpha'}
\right\}+
2k\sum_{\alpha\in\Sigma_{+}}
\left\{
\frac{(\alpha,\alpha)e^{-2\alpha}}{(1-e^{-2\alpha})^{2}}\circ(1-r_{\alpha})
\right\}+\\
-2k^2\sum_{\alpha,\beta\in\Sigma_{+}}(\alpha,\beta)\left\{
\frac{1}{1-e^{-2\alpha}}(1-r_{\alpha})\frac{1}{1-e^{-2\beta}}(1-r_{\beta})\right\}+\\
2k^2\sum_{\alpha,\beta\in\Sigma_{+}}\left\{\frac{(\alpha,\beta)}{1-e^{-2\alpha}}(1-r_{\alpha})\right\}-\frac{1}{2}k^2\sum_{\alpha,\beta\in\Sigma_{+}}(\alpha,\beta).
\end{multline*}
We have $-\frac{1}{2}k^2\sum_{\alpha,\beta\in\Sigma_{+}}(\alpha,\beta)=-\frac{1}{2}k^2(2\alpha_{13},2\alpha_{13})=-4k^2$. We proceed to show that the third and second to last terms together act as zero on $\bbC[P]^{W'}$. To this end we use the identity
$$\frac{1}{1-e^{-2\alpha}}(1-r_{\alpha})\frac{1}{1-e^{-2\beta}}(1-r_{\beta})=\frac{1}{1-e^{-2\alpha}}\left(
\frac{1}{1-e^{-2\beta}}(1-r_{\alpha})-\frac{1}{1-e^{-r_{\alpha}(2\beta)}}(r_{\alpha}-r_{\alpha}r_{\beta})
\right)$$
and the effect of the simple reflections on $p\in\bbC[P]^{W'}$ given by
$$s_{12}s_{13}p=s_{13}p,\quad s_{13}s_{12}p=s_{12}p,\quad s_{23}r_{12}p=s_{13}p,\quad s_{23}s_{13}p=s_{12}p.$$
Denote $\Delta_{\alpha}=\frac{1}{1-e^{-2\alpha}}(1-s_{\alpha})$.
In Table \ref{Table:terms} we have collected the restrictions of the terms in
\begin{equation}\label{eq:third3last}
-2k^2\sum_{\alpha,\beta\in\Sigma_{+}}(\alpha,\beta)\left\{
\frac{1}{1-e^{-2\alpha}}(1-r_{\alpha})\frac{1}{1-e^{-2\beta}}(1-r_{\beta})\right\}.
\end{equation}
\begin{table}[h]
\centering
\begin{tabular}{|c|c|c|c|}
\hline
$\alpha$ & $\beta$ & $r_{\alpha}(\beta)$ & $(\alpha,\beta)\Delta_{\alpha}\Delta_{\beta}$ \\
\hline
$\alpha_{12}$ & $\alpha_{12}$ & $-\alpha_{12}$ & 
$2\left(\frac{1}{1-e^{-2\alpha_{12}}}\frac{1}{1-e^{-2\alpha_{12}}}(1-s_{12})-
\frac{1}{1-e^{-2\alpha_{12}}}\frac{1}{1-e^{2\alpha_{12}}}(s_{12}-1)\right)$\\
$\alpha_{13}$ & $\alpha_{12}$ & $-\alpha_{23}$ & 
$\left(\frac{1}{1-e^{-2\alpha_{13}}}\frac{1}{1-e^{-2\alpha_{12}}}(1-s_{12})-
\frac{1}{1-e^{-2\alpha_{13}}}\frac{1}{1-e^{2\alpha_{23}}}(s_{13}-s_{12})\right)$\\
$\alpha_{23}$ & $\alpha_{12}$ & $\alpha_{13}$ & 
$-\left(\frac{1}{1-e^{-2\alpha_{23}}}\frac{1}{1-e^{-2\alpha_{12}}}(1-s_{12})-
\frac{1}{1-e^{-2\alpha_{23}}}\frac{1}{1-e^{-2\alpha_{13}}}(1-s_{13})\right)$\\
$\alpha_{12}$ & $\alpha_{13}$ & $\alpha_{23}$ & 
$\left(\frac{1}{1-e^{-2\alpha_{12}}}\frac{1}{1-e^{-2\alpha_{13}}}(1-s_{13})-
\frac{1}{1-e^{-2\alpha_{12}}}\frac{1}{1-e^{-2\alpha_{23}}}(s_{12}-s_{13})\right)$\\
$\alpha_{13}$ & $\alpha_{13}$ & $-\alpha_{13}$ & 
$2\left(\frac{1}{1-e^{-2\alpha_{13}}}\frac{1}{1-e^{-2\alpha_{13}}}(1-s_{13})-
\frac{1}{1-e^{-2\alpha_{13}}}\frac{1}{1-e^{2\alpha_{13}}}(s_{13}-1)\right)$\\
$\alpha_{23}$ & $\alpha_{13}$ & $\alpha_{12}$ & 
$\left(\frac{1}{1-e^{-2\alpha_{23}}}\frac{1}{1-e^{-2\alpha_{13}}}(1-s_{13})-
\frac{1}{1-e^{-2\alpha_{23}}}\frac{1}{1-e^{-2\alpha_{12}}}(1-s_{12})\right)$\\
\hline
\end{tabular}
\caption{Restriction to $\bbC[P]^{W'}$ of the terms of \eqref{eq:third3last}}\label{Table:terms}
\end{table}
We also have
\begin{equation}\label{eq:third2last}
2k^2\sum_{\alpha,\beta\in\Sigma_{+}}\left\{\frac{(\alpha,\beta)}{1-e^{-2\alpha}}(1-r_{\alpha})\right\}=
2k^2\left\{\frac{2}{1-e^{-2\alpha_{12}}}(1-r_{\alpha_{12}})+\frac{4}{1-e^{-2\alpha_{13}}}(1-r_{\alpha_{13}})\right\}
\end{equation}
upon restriction to $\bbC[P]^{W'}$.
To calculate the sum of \eqref{eq:third3last} and \eqref{eq:third2last} we gather the coefficients of the group elements $1,s_{12},s_{13}$ in \eqref{eq:third2last} and Table \ref{Table:terms} and with careful bookkeeping we find that all coefficients are zero.
It follows that
\begin{multline*}
\beta(\gamma(\xi_{1}\xi_{2}+\xi_{1}\xi_{3}+\xi_{2}\xi_{3}))=\del_{\xi_{1}}\del_{\xi_{2}}+\del_{\xi_{1}}\del_{\xi_{3}}+\del_{\xi_{2}}\del_{\xi_{3}}-k\sum_{\alpha\in\Sigma_{+}}
\left\{
\frac{e^{\alpha}+e^{-\alpha}}{e^{\alpha}-e^{-\alpha}}\del_{\alpha'}
\right\}-4k^2\\
+4k
\begin{pmatrix}
\frac{1}{(e^{\alpha_{12}}-e^{-\alpha_{12}})^2}+\frac{1}{(e^{\alpha_{13}}-e^{-\alpha_{13}})^{2}}&
-\frac{1}{(1-e^{-2\alpha_{12}})^{2}}&
-\frac{1}{(1-e^{-2\alpha_{13}})^{2}}\\
-\frac{1}{(1-e^{2\alpha_{12}})^{2}}&
\frac{1}{(e^{\alpha_{12}}-e^{-\alpha_{12}})^{2}}+\frac{1}{(e^{\alpha_{23}}-e^{-\alpha_{23}})^{2}}&
-\frac{1}{(1-e^{-2\alpha_{23}})^{2}}\\
-\frac{1}{(1-e^{2\alpha_{13}})^{2}}&-\frac{1}{(1-e^{2\alpha_{23}})^{2}}&\frac{1}{(e^{\alpha_{13}}-e^{-\alpha_{13}})^{2}}+\frac{1}{(e^{\alpha_{23}}-e^{-\alpha_{23}})^{2}}
\end{pmatrix}.
\end{multline*}

\end{example}

\section{Application I}\label{s:AppI}
Consider the compact symmetric pair $(U,K)$ equal to one of the following three, 
$$(\SU(3),\SO(3)),\quad (\SU(3)\times\SU(3),\mathrm{diag}(\SU(3))),\quad(\SU(6),\Sp(6)).$$
The restricted root system $\Sigma$ of each of these pairs is of type $A_{2}$ and the dimensions of the root spaces are $m=1,2,4$ respectively. Following the structure theory of compact symmetric spaces, $\Sigma\subset\laa^{*}$ where $i\laa=\mathrm{Lie}(A)$ for a compact torus $A\subset U$ with the property that $U=KAK$. We denote $M=Z_{K}(A)$, the centralizer of $A$. We identify the complex valued regular functions on $A$ with $\bbC[P_{\Sigma}]$.
In describing irreducible representations of $U$ and $K$ we use the theorem of the highest weight.

Let $\pi^{K}_{\mu}:K\to\GL(V^{K}_{\mu})$ be an irreducible $K$-representation of highest weight $\mu$. The space of regular functions $F:U\to\End(V^{K}_{\mu})$ that satisfy $F(ku\ell)=\pi(k)F(u)\pi(\ell)$ for all $k,\ell\in K, u\in U$ is denoted by $E^{\mu}$. The transformation behavior implies that each function in $E^{\mu}$ is completely determined by its restriction to $A$ so we obtain an isomorphism
$E^{\mu}\to E^{\mu}_{A}$ be restricting to $A$. Note that
$$E^{\mu}_{A}\subset \left(\bbC[P_{\Sigma}]\otimes\End(V^{K}_{\mu})\right)^{N_{K}(A)}=\left(\bbC[P_{\Sigma}]\otimes\End_{M}(V^{K}_{\mu})\right)^{(M\cap A)\cdot W_{\Sigma}}.$$
For example, if $\mu=0$ then $E^{0}_{A}=\bbC[P_{\Sigma}]^{N_{K}(A)}=\bbC[2P_{\Sigma}]^{W_{\Sigma}}$. In this special case it is known that $\pi^{K}_{0}$, the trivial $K$-representation, occurs in every irreducible $U$-representation with multiplicity at most one. In fact, using the Cartan-Helgason theorem, an irreducible $U$-representation $\pi^{U}_{\lambda}$ admits a $K$-invariant subspace precisely when $\lambda\in 2P_{\Sigma}$ is dominant. It implies, using the Peter-Weyl theorem, that $\bbC[2P_{\Sigma}]^{W_{\Sigma}}$ can be equipped with an orthogonal basis whose members, the restricted zonal spherical functions, are uniquely determined (up to scaling) as simultaneous eigenfunctions of a commutative subquotient of the universal enveloping algebra $U(\lau_{\bbC})$ of the complexification $\lau_{\bbC}$ of $\lau=\mathrm{Lie}(U)$. This commutative subquotient can be identified with $S(\laa_{\bbC})^{W_{\Sigma}}$ by the Harish-Chandra isomorphism, cf.~\cite[\S5]{MR1313912}.
To describe the restricted zonal spherical functions for our symmetric pairs in terms of Heckman-Opdam polynomials we take $R=2\Sigma$ and $k=\frac{m}{2}$ for the root multiplicity.
We use $k=1/2,1,2$ for reference to the symmetric pairs and denote the corresponding groups by $U_{k},K_{k}, M_{k},A_{k}$.

For each of the symmetric pairs we consider the spherical functions for the defining representation $\pi^{K}_{\mu_{1}}$ of $K$, where $\mu_{1}$ is the first fundamental weight of $K$. The induction of $\pi^{K}_{\mu_{1}}$ to $U$ is multiplicity free, i.e.~we have $[\pi^{U}_{\lambda}|_{K}:\pi^{K}_{\mu_{1}}]\le1$ for each dominant weight of $U$. Indeed, for $k=1,2$ the triples $(U,K,\mu_{1})$ are multiplicity free triples that are obtained from multiplicity free systems in the sense of \cite[\S8]{PvP}. This is not the case for $k=1/2$, but the multiplicity freeness is guaranteed by Lemma \ref{lemma:branching1/2} below. 
For $\pi^{U}_{\lambda}$ that contains $\pi^{K}_{\mu_{1}}$ we fix $K$-equivariant linear maps $j^{\mu_{1}}_{\lambda}:V^{K}_{\mu_{1}}\to V^{U}_{\lambda}$ and $p^{\lambda}_{\mu_{1}}:V^{U}_{\lambda}\to V^{K}_{\mu}$ for which $p^{\lambda}_{\mu_{1}}j^{\mu_{1}}_{\lambda}$ is the identity.
We define the spherical function $\Phi^{\mu_{1}}_{\lambda}\in E^{\mu_{1}}$ of type $\mu_{1}$ associated to $\lambda$ by
$$\Phi^{\mu_{1}}_{\lambda}(u)=p^{\lambda}_{\mu_{1}}\pi^{U}_{\lambda}(u)j^{\mu_{1}}_{\lambda}.$$
For $k=1,2$ it is known that $E_{A}^{\mu_{1}}$ is a freely and finitely generated $E^{0}_{A}$-module generated by three spherical functions, see \cite[Thm.8.12]{PvP}. The proof is based on the fact that the set $P_{U}^{+}(\mu_{1})$ of dominant $U$-weights $\lambda$ for which $[\pi^{U}_{\lambda}|_{K}:\pi^{K}_{\mu_{1}}]=1$ is of the form
$$P^{+}_{U}(\mu_{1})\cong\{b_{1},b_{2},b_{3}\}+P_{U}^{+}(0)$$
where $b_{1},b_{2},b_{3}$ are elements of $P_{U}^{+}$. The generators are then the spherical functions associated to $b_{1},b_{2}$ and $b_{3}$. The set $P_{U}^{+}(0)$ is a monoid whose elements are called spherical weights. The fundamental weights are denoted by $\omega_{i}$ where $i$ ranges appropriately.

\begin{lemma}\label{lemma:branching1/2}
If $k=\frac{1}{2}$ then the set of dominant weights $\lambda$ of $\SU(3)$ for which $[\pi^{U_{1/2}}_{\lambda}|_{K}:\pi^{K}_{\mu_{1}}]\ge1$ is given by
$$P^{+}_{U_{1/2}}(\mu_{1})=\{\omega_{1},\omega_{1}+\omega_{2},\omega_{2}\}+2P_{U_{1/2}}^{+},$$
where $2P_{U_{1/2}}^{+}$ is the set of spherical weights. Moreover, for $\lambda\in P_{+}^{U_{1/2}}(\mu_{1})$ we have $[\pi^{U_{1/2}}_{\lambda}|_{K_{1/2}}:\pi^{K_{1/2}}_{\mu_{1}}]=1$.  
\end{lemma}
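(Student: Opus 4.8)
The plan is to compute the branching multiplicities $[\pi^{U_{1/2}}_{\lambda}|_{K}:\pi^{K}_{\mu_{1}}]$ directly and read off both the support and the value at once. Here $U=\SU(3)$, $K=\SO(3)$, and $\pi^{K}_{\mu_{1}}$ is the $3$-dimensional defining (spin-$1$) representation $V_{1}$ of $\SO(3)$. First I would fix a maximal torus of $K$ inside the diagonal torus $T$ of $\SU(3)$: up to conjugacy the rotation subgroup of $\SO(3)$ in the $e_{1},e_{2}$-plane diagonalises over $\bbC$ to $\{\mathrm{diag}(e^{i\theta},e^{-i\theta},1)\}\subset T$. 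A weight $\nu=\sum_{i}\nu_{i}\eps_{i}$ of $V^{U}_{\lambda}$ then restricts to this circle with character $e^{i(\nu_{1}-\nu_{2})\theta}$, so setting
$$m_{\lambda}(n)=\sum_{\nu\,:\,\nu_{1}-\nu_{2}=n}\dim V^{U}_{\lambda}[\nu],$$
the restriction of $\pi^{U}_{\lambda}$ to the $K$-torus has character $\sum_{n}m_{\lambda}(n)e^{in\theta}$. Since each such $n$ is an integer and $m_{\lambda}(n)=m_{\lambda}(-n)$, the standard rank-one decomposition for $\SO(3)$ gives $[\pi^{U}_{\lambda}|_{K}:V_{\ell}]=m_{\lambda}(\ell)-m_{\lambda}(\ell+1)$ for the spin-$\ell$ representation $V_{\ell}$. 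In particular
$$[\pi^{U}_{\lambda}|_{K}:\pi^{K}_{\mu_{1}}]=m_{\lambda}(1)-m_{\lambda}(2),$$
and the whole lemma is reduced to evaluating this difference for $\lambda=p\omega_{1}+q\omega_{2}$.

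The heart of the argument is the computation of $m_{\lambda}(1)$ and $m_{\lambda}(2)$. I would carry it out from the explicit weight-multiplicity pattern of $\SU(3)$ (multiplicity $1$ on the outer hexagonal shell, increasing by one on each inner shell until the central triangle), counting the weights of $V^{U}_{\lambda}$ on the two lines $\nu_{1}-\nu_{2}=1$ and $\nu_{1}-\nu_{2}=2$ with their multiplicities; equivalently one extracts the relevant coefficients from the specialised character $s_{(p,q)}(t,t^{-1},1)$. This is the classical $\SU(3)\downarrow\SO(3)$ branching rule (Elliott's rule). Writing $a=\max(p,q)$ and $b=\min(p,q)$, the computation yields
$$[\pi^{U}_{\lambda}|_{K}:\pi^{K}_{\mu_{1}}]=m_{\lambda}(1)-m_{\lambda}(2)=\begin{cases}0,&p\text{ and }q\text{ both even},\\1,&\text{otherwise}.\end{cases}$$
In particular the multiplicity never exceeds $1$, which already yields the ``moreover'' clause, while the set on which it equals $1$ is exactly $\{p\omega_{1}+q\omega_{2}\mid p,q\text{ not both even}\}$.

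Finally I would translate this support into the coset form of the statement. The three residue classes of $(p,q)$ modulo $2$ other than $(0,0)$ are represented by $(1,0),(1,1),(0,1)$, whence
$$\{p\omega_{1}+q\omega_{2}\mid p,q\text{ not both even}\}=\{\omega_{1},\omega_{1}+\omega_{2},\omega_{2}\}+\{2a\omega_{1}+2b\omega_{2}\mid a,b\in\bbZ_{\ge0}\}.$$
By the Cartan--Helgason theorem for $(\SU(3),\SO(3))$ the complementary set $\{2a\omega_{1}+2b\omega_{2}\}$ is precisely the monoid $2P^{+}_{U_{1/2}}$ of spherical weights, so the support is $\{\omega_{1},\omega_{1}+\omega_{2},\omega_{2}\}+2P^{+}_{U_{1/2}}$ and the multiplicity on it is $1$, as claimed. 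It is worth recording that the three minimal weights $\omega_{1},\omega_{1}+\omega_{2},\omega_{2}$ match the three distinct nontrivial characters of $M\cong(\bbZ/2\bbZ)^{2}$ occurring in $V^{K}_{\mu_{1}}|_{M}$, so that $\dim\End_{M}(V^{K}_{\mu_{1}})=3=|W_{\Sigma}/W'|$; this is the expected rank of the module and ties the result to the vector-valued picture used elsewhere.

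The main obstacle is the weight count in the second step: one must check that the shell multiplicities conspire so that $m_{\lambda}(1)-m_{\lambda}(2)\in\{0,1\}$ with the stated parity dependence. I expect this to be the only genuinely computational point, and it is cleanest to organise it through Elliott's $K$-band bookkeeping, where the spin-$1$ type is supplied once by the band $K=0$ when $\max(p,q)$ is odd and once by the band $K=1$ when $\min(p,q)$ is odd, the two contributions never occurring simultaneously. Useful consistency checks are the small cases $(1,0)$ with $V^{U}_{\lambda}|_{K}=V_{1}$, $(1,1)$ (the adjoint) with $V_{2}\oplus V_{1}$, and $(2,0)$ with $V_{2}\oplus V_{0}$, which exhibit the multiplicity $1$, $1$, and $0$ predicted by the formula.
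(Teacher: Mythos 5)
Your proof is correct, but it takes a genuinely different route from the paper's. The paper argues structurally: it first bounds $[\pi^{U_{1/2}}_{\lambda}|_{K}:\pi^{K}_{\mu_{1}}]$ by the multiplicity $[\pi^{K}_{\mu_{1}}|_{M}:\pi^{M}_{\lambda_{*}}]$ of the restricted $M$-character, where $M\cong(\bbZ/2\bbZ)^{2}$ is the centralizer of $A$ in $\SO(3)$; since the three $M$-characters occurring in $V^{K}_{\mu_{1}}$ are $\omega_{1},\omega_{2}-\omega_{1},-\omega_{2}$ and two weights restrict equally on $M$ iff they differ by an element of $2P_{U_{1/2}}$, this gives the inclusion of $P^{+}_{U_{1/2}}(\mu_{1})$ into the three cosets together with the multiplicity bound $\le 1$; equality is then reduced to exhibiting $\pi^{K}_{\mu_{1}}$ inside the three base representations, the only nontrivial case being the adjoint, handled via the skew-symmetric matrices. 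You instead compute the full $\SU(3)\downarrow\SO(3)$ branching by restricting torus weights ($[\pi^{U}_{\lambda}|_{K}:V_{1}]=m_{\lambda}(1)-m_{\lambda}(2)$) and invoking Elliott's rule; your band bookkeeping (the $K=0$ band contributes $V_{1}$ iff $\max(p,q)$ is odd, which forces $\min(p,q)$ even, while the $K=1$ band exists and contributes $V_{1}$ exactly when $\min(p,q)$ is odd) is correct and your test cases $(1,0),(1,1),(2,0)$ check out, as does the translation of the parity condition into the three cosets of $2P^{+}_{U_{1/2}}$. The trade-off: your argument is elementary and self-contained for this particular pair and yields the complete decomposition as a by-product, but it leans on the classical Elliott rule (the one genuinely computational step is cited rather than derived) and does not transfer to the other two $A_{2}$ pairs treated in the paper, whereas the paper's $M$-multiplicity bound is exactly the mechanism used uniformly for $k=1,2$ via the multiplicity-free systems of \cite{PvP}. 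Your closing observation that the three minimal weights match the three $M$-characters in $V^{K}_{\mu_{1}}$ is in fact the engine of the paper's proof rather than a mere consistency check.
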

\begin{proof}
Here $M_{1/2}$ consists of the diagonal matrices with $\pm1$ on the diagonal and determinant one. From an argument similar to that of the proof of  \cite[Thm.3.1]{MR3801483} we see that $[\pi^{U_{1/2}}_{\lambda}|_{K_{1/2}}:\pi^{K_{1/2}}_{\omega_{1}}]$ is bounded by $[\pi^{K_{1/2}}_{\mu_{1}}|_{M_{1/2}}:\pi^{M_{1/2}}_{\lambda_{*}}]$ which is at most one.
Moreover, that the restriction of $\lambda\in P^{+}_{U_{1/2}}(\mu_{1})$ to $M_{1/2}$ is a character that occurs as a constituent of $V^{K_{1/2}}_{\mu_{1}}$ which can be described by $\omega_{1},\omega_{2}-\omega_{1},-\omega_{2}$, where we interpret for a moment the weights as characters of the torus. The restrictions of two weights are the same if they differ by a weight of $2P_{U_{1/2}}$. It follows that $P_{U_{1/2}}^{+}(\omega_{1})\subset \{\omega_{1},\omega_{1}+\omega_{2},\omega_{2}\}+2P^{+}_{U_{1/2}}$. To show equality we have to show that $\pi^{U_{1/2}}_{\omega_{1}},\pi^{U_{1/2}}_{\omega_{1}+\omega_{2}},\pi^{U_{1/2}}_{\omega_{2}}$ all contain $\pi^{K}_{\mu_{1}}$ upon restriction. Only for $\omega_{1}+\omega_{2}$ there is something to show. Since $V^{U}_{\omega_{1}+\omega_{2}}\subset \End(\bbC^{3})$ where the action of $\SU(3)$ is by conjugation, we observe that that the 3-dimensional space of anti-symmetric matrices invariant for $K=\SO(3)$ and is in fact isomorphic to $\bbC^{3}$ via 
\begin{equation}\label{embeddingSkew}
(a,b,c)\mapsto\begin{pmatrix}
0&-c&b\\
c&0&-a\\
-b&a&0
\end{pmatrix}
\end{equation}
as an $\SO(3)$-module.
\end{proof}

The spherical functions restricted to $A$ take their values in $\End_{M}(V^{K}_{\mu_{1}})$ which can be identified with $\bbC^{3}$, as follows. In each case $k=1/2,1,2$ we fix a basis of $V^{K}_{\mu_{1}}$ that contains at the same time a basis for the three irreducible submodules for $M$. Each of the three blocks of $\Phi^{\lambda}_{\mu_{1}}(a)\in\End_{M}(V^{K}_{\mu_{1}})$ is a multiple of the identity and we send this multiple to the corresponding entry in $\bbC^{3}$ following the ordering of the bases below.  
\begin{itemize}
\item $V^{K_{1/2}}_{\mu_{1}}\cong\bbC^{3}$ and the standard basis elements $e_{1},e_{2},e_{3}$ transforms under $M_{1/2}$ via $\omega_{1},\omega_{1}+\omega_{2},\omega_{2}$, viewed as characters on the group for the moment. The group $A_{1/2}$ consists of the diagonal matrices $\diag(t_{1},t_{2},t_{3})\in\SU(3)$.
\item $V^{K}_{\mu_{1}}=\bbC^{3}$ and the standard basis $\{e_{1},e_{2},e_{3}\}$ is a weight basis for $M_{1}\subset\textrm{diag}(\SU(3))$, the maximal diagonal torus. The group $A_{1}$ is the anti-diagonal embedding $a\mapsto(a,a^{-1})$ of  the group $A_{1/2}$ in $\SU(3)\times\SU(3)$. 
\item $V^{K}_{\mu_{1}}=\bbC^{6}$ and $M_{2}=\SU(2)\times\SU(2)\times\SU(2)$. The standard basis $\{e_{1},\ldots,e_{6}\}$ has sub-bases $\{e_{1},e_{4}\},\{e_{2},e_{5}\},\{e_{3},e_{6}\}$ for the irreducible $M_{2}$-modules in the decomposition $\bbC^{6}=\bbC^{2}\times\bbC^{2}\times\bbC^{2}$. The group $A_{2}$ consist of diagonal matrices $\diag(t_{1},t_{2},t_{3},t_{1},t_{2},t_{3})$ for which $\diag(t_{1},t_{2},t_{3})\in A_{1/2}$.
\end{itemize}
The groups $A_{k}$ are all isomorphic to $A_{1/2}$ and $M_{k}\cap A_{k}$ is isomorphic to $M_{1/2}$ under this identification. The induced action of $W_{\Sigma}\cong S_{3}$ on $\bbC^{3}$ is given by permutation of the standard basis elements. 
In this way we identify each of the spaces $E^{\mu_{1}}_{A_{k}}$ with a subspace $\calE_{k}$ of 
$$\calE=\left(\bbC[P_{\Sigma}]\otimes\bbC^{3}\right)^{(M_{1/2}\cdot W_{\Sigma})},$$
where $\Sigma$ is the restricted root system for $(U_{1/2},K_{1/2})$ and $P_{\Sigma}$ is the weight lattice for this root system.
A restricted spherical function $\Phi^{\mu_{1}}_{\lambda}\in E^{\mu_{1}}_{A_{k}}$ defines a unique function $\Psi^{\mu_{1}}_{\lambda}(k)\in\calE$.
Upon writing $\calE^{0}=\bbC[2P_{\Sigma}]^{W_{\Sigma}}$ we see that $\calE,\calE_{k},k=1/2,1,2$ are $\calE_{0}$-modules.

\begin{proposition}\label{prop:gen}
The $\calE^{0}$-modules $\calE_{k},k=1/2,1,2$ are equal and as an $\calE^{0}$-module it is freely generated by the functions
$$\Psi^{\mu_{1}}_{\omega_{1}}=
\begin{pmatrix}
e^{\omega_{1}}\\e^{\omega_{2}-\omega_{1}}\\e^{-\omega_{2}}
\end{pmatrix},\quad
\Psi^{\mu_{1}}_{\omega_{1}+\omega_{2}}=\frac{1}{2}
\begin{pmatrix}
e^{\alpha_{2}}+e^{-\alpha_{2}}\\e^{\alpha_{3}}+e^{-\alpha_{3}}\\e^{\alpha_{1}}+e^{-\alpha_{1}}
\end{pmatrix},\quad
\Psi^{\mu_{1}}_{\omega_{2}}=
\begin{pmatrix}
e^{-\omega_{1}}\\e^{\omega_{1}-\omega_{2}}\\e^{\omega_{2}}
\end{pmatrix}.$$
\end{proposition}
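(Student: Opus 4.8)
The plan is to prove the proposition in two stages: first establish that each $\calE_k$ is a free $\calE^{0}$-module of rank three, and then identify an explicit free generating set that is the \emph{same} for all $k$, which forces the three submodules of $\calE$ to coincide. For the first stage I would use that the branching $[\pi^U_\lambda|_K:\pi^K_{\mu_1}]\le 1$ is multiplicity free in all three cases (by the multiplicity-free-systems remark preceding Lemma \ref{lemma:branching1/2} for $k=1,2$, and by Lemma \ref{lemma:branching1/2} itself for $k=1/2$). Together with the Peter--Weyl theorem this gives that the spherical functions $\Phi^{\mu_1}_\lambda$ with $\lambda\in P^+_U(\mu_1)$ form a vector space basis of $E^{\mu_1}_A=\calE_k$, while the restricted zonal spherical functions indexed by $P^+_U(0)$ form a basis of $\calE^{0}$.

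The freeness then follows from the decomposition $P^+_U(\mu_1)=\{b_1,b_2,b_3\}+P^+_U(0)$: multiplying a generator $\Phi^{\mu_1}_{b_i}$ by the zonal spherical function attached to $\sigma\in P^+_U(0)$ produces an element of $\calE_k$ whose leading term is $\Phi^{\mu_1}_{b_i+\sigma}$, so the change-of-basis matrix from $\{\Phi^{\mu_1}_{b_i}\cdot(\mathrm{zonal}_\sigma)\}$ to $\{\Phi^{\mu_1}_\lambda\}$ is unitriangular with respect to the dominance order. This is exactly the argument of \cite[Thm.8.12]{PvP}, which I would cite for $k=1,2$ and replicate \emph{verbatim} for $k=1/2$ using Lemma \ref{lemma:branching1/2}. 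Hence $\calE_k$ is freely generated over $\calE^{0}$ by the three spherical functions attached to the minimal weights, which in the common restricted labelling are $\omega_1$, $\omega_1+\omega_2$ and $\omega_2$.

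It then remains to compute these three generators explicitly in the $\End_{M}(V^K_{\mu_1})\cong\bbC^3$ model and to check the answer is independent of $k$. For the model case $k=1/2$, i.e.\ $(\SU(3),\SO(3))$, the computation is direct: for $\lambda=\omega_1$ the representation $\pi^U_{\omega_1}$ is the defining action on $\bbC^3=V^K_{\mu_1}$, so $\Phi^{\mu_1}_{\omega_1}(a)=\diag(t_1,t_2,t_3)$ and, writing the restricted weights $\eps_i$ as $\omega_1,\omega_2-\omega_1,-\omega_2$, we obtain the first stated vector; the weight $\omega_2$ gives the contragredient representation and hence the second. For $\lambda=\omega_1+\omega_2$ the representation $V^U_{\omega_1+\omega_2}\subset\End(\bbC^3)$ is the conjugation (adjoint) action, the $K$-type $\mu_1$ sits in the antisymmetric matrices via \eqref{embeddingSkew}, and conjugation by $a$ multiplies the skew element attached to the pair $(i,j)$ by $\tfrac12(t_i/t_j+t_j/t_i)=\tfrac12(e^{\alpha_{ij}}+e^{-\alpha_{ij}})$, yielding the third. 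The same three vectors should then be shown to arise for $k=1,2$ by the analogous matrix-coefficient computation, now projecting onto the three irreducible $M_k$-blocks of $V^K_{\mu_1}$ fixed in the displayed bases; since the entries are governed only by the restricted weights, the $k$-dependence disappears.

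Finally, because all three generators and the base ring $\calE^{0}=\bbC[2P_\Sigma]^{W_\Sigma}$ are literally the same elements of the ambient space $\calE$ for every $k$, the free submodules they generate coincide, giving $\calE_{1/2}=\calE_1=\calE_2$. The hard part will be the explicit spherical-function computation for $k=2$, where $M_2=\SU(2)^3$ and $V^K_{\mu_1}=\bbC^6$: one must verify that projecting the matrix coefficients of the relevant $\SU(6)$-representations onto the three two-dimensional $M_2$-isotypic blocks (each acting as an $\End_{M_2}$-scalar) reproduces \emph{exactly} the same three vectors as in the $k=1/2$ case, so that the identification of generators, and hence the equality of the modules, is genuinely uniform across the three symmetric pairs.
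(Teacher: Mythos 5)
Your proposal follows essentially the same route as the paper's proof: invoke (and, for $k=1/2$, replicate via Lemma~\ref{lemma:branching1/2}) the argument of \cite[Thm.~8.12]{PvP} to get that each $\calE_{k}$ is generated over the common base ring $\calE^{0}=\bbC[2P_{\Sigma}]^{W_{\Sigma}}$ by the spherical functions at the three minimal weights, compute those three generators explicitly in the $\bbC^{3}$-model, and conclude equality of the modules because the generators come out the same. One caveat: your heuristic that ``the entries are governed only by the restricted weights, so the $k$-dependence disappears'' is not a valid shortcut (zonal spherical functions themselves depend on $k$), so the $k=2$ verification you rightly flag as the hard part genuinely must be carried out --- the paper does it for $b=\omega_{3}$ via the $\Sp(6)$-equivariant embedding $\bbC^{6}\to\bigwedge^{3}\bbC^{6}$, $e_{1}\mapsto e_{1}\wedge e_{2}\wedge e_{5}+e_{1}\wedge e_{3}\wedge e_{6}$, whose diagonal matrix coefficient on $A_{2}$ is $t_{2}t_{3}^{-1}+t_{3}t_{2}^{-1}$.
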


\begin{proof}
The space $E^{\mu_{1}}_{A_{1/2}}$ is generated as a module over $E^{0}_{A_{1/2}}$ by the spherical functions $\Phi^{\mu_{1}}_{b}$ with $b\in\{\omega_{1},\omega_{1}+\omega_{2},\omega_{2}\}$, which follows from the proof of \cite[Thm.8.12]{PvP}.
The space $E^{\mu_{1}}_{A_{k}}$ with $k=1,2$, is freely generated as a module over $E^{0}_{A}$ by the spherical functions $\Phi^{\mu_{1}}_{b}$ with $b\in\{(\omega_{1},0),(\omega_{2},\omega_{1}),(0,\omega_{1})\}$ for $k=1$ and $b\in\{\omega_{1},\omega_{3},\omega_{5}\}$, see \cite[Case B.1.1]{PvP}.
Since $E^{0}_{A_{k}}=\bbC[2P_{\Sigma}]^{W_{\Sigma}}$ is the same for $k=1/2,1,2$, we only have to calculate the spherical functions for the weights $b_{1},b_{2},b_{3}$ in the three cases.

For $k=1$ the spherical functions have been calculated in \cite[\S8.1]{MR4053617}. If $k=1/2$, then the spherical functions $\Phi^{\mu_{1}}_{b}$ are easily calculated for $b=\omega_{1},\omega_{2}$. For $b=\omega_{1}+\omega_{2}$ use the embedding $\bbC^{3}\to\End(\bbC^{3})$ given by \eqref{embeddingSkew}.
Likewise, for $k=2$, the calculation of $\Phi^{\mu_{1}}_{b}$ with $b=\omega_{1},\omega_{5}$ is straightforward. For $b=\omega_{3}$ note that
$$\bbC^{6}\to\bigwedge^{3}\bbC^{6}, e_{1}\mapsto e_{1}\wedge e_{2}\wedge e_{5}+e_{1}\wedge e_{3}\wedge e_{6}$$
induces an $\Sp(6)$-equivariant embedding. The matrix coefficient for this vector with itself, restricted to $A_{2}$, is given by $t_{2}t_{3}^{-1}+t_{3}t_{2}^{-1}$ and it is the first entry of $\Phi^{\mu_{1}}_{\omega_{3}}$. The other entries are determined by the action of $W_{\Sigma}$.
\end{proof}

The inner product for spherical functions on the group is given by integration over the compact group $U$ and it can be reduced to integration over $A$, cf.~\cite[(2.6)]{MR4053617}.
In this way the space $E^{\mu_{1}}_{A}$ carries an inner product that is given by
$$\langle\Phi,\Psi\rangle=\int_{A}\Phi(a)^{*}\Psi(a)\delta_{k}(a)da$$
with $\delta_{k}$ the same as \eqref{eq:innerproduct}. 
We equip $\calE$ with the inner product in such a way that the isomorphisms $E^{\mu_{1}}_{A,k}\to\calE_{k}$ become unitary for $k=1/2,1$. For $k=2$ the identification is unitary up to a factor two.

\begin{lemma}\label{lemma:T}
The map $\calE\to(\bbC[2P]\otimes\bbC^{3})^{S_{3}}$
given by point wise multiplication by
$$T=\begin{pmatrix}
e^{\omega_{1}}&0&0\\
0&e^{\omega_{2}-\omega_{1}}&0\\
0&0&e^{-\omega_{2}}
\end{pmatrix}$$
is a unitary isomorphism of $\bbC[2P]^{S_{3}}$-modules.
\end{lemma}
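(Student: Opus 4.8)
The whole proof rests on one observation: the diagonal of $T$ is precisely the generator $\Psi^{\mu_{1}}_{\omega_{1}}=(e^{\omega_{1}},e^{\omega_{2}-\omega_{1}},e^{-\omega_{2}})$ of Proposition \ref{prop:gen}, while $\mathrm{diag}(T^{-1})=\Psi^{\mu_{1}}_{\omega_{2}}=(e^{-\omega_{1}},e^{\omega_{1}-\omega_{2}},e^{\omega_{2}})$, and $\Psi^{\mu_{1}}_{\omega_{1}}\odot\Psi^{\mu_{1}}_{\omega_{2}}=(1,1,1)$, where $\odot$ denotes the componentwise (Hadamard) product on $\bbC[P_{\Sigma}]\otimes\bbC^{3}$. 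Thus pointwise multiplication by $T$ is the same operation as $\Psi^{\mu_{1}}_{\omega_{1}}\odot(-)$, and the plan is to read off each of the four assertions—lands in the target, is bijective, is $\bbC[2P]^{S_{3}}$-linear, and is unitary—from elementary features of this single invariant vector.

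First I would spell out $\calE=(\bbC[P_{\Sigma}]\otimes\bbC^{3})^{(M_{1/2}\cdot W_{\Sigma})}$ in coordinates. Writing $\Phi=(\phi_{1},\phi_{2},\phi_{3})$ in the weight basis $e_{1},e_{2},e_{3}$ of $\bbC^{3}$, the group $M_{1/2}$ is the full $2$-torsion subgroup $A_{1/2}[2]$ of the torus, so a character $\lambda\in P_{\Sigma}$ restricts trivially to $M_{1/2}$ exactly when $\lambda\in 2P_{\Sigma}$. Since $e_{i}$ carries the $M_{1/2}$-character $\chi_{1}=\omega_{1}$, $\chi_{2}=\omega_{1}+\omega_{2}$, $\chi_{3}=\omega_{2}$, invariance under $M_{1/2}$ is equivalent to $\phi_{i}\in e^{\chi_{i}}\bbC[2P]$ for each $i$ (with $\bbC[2P]=\bbC[2P_{\Sigma}]$ the even sublattice), and the one remaining constraint is invariance under the diagonal $W_{\Sigma}$-action, which is the Weyl action on exponents together with the permutation action on the $e_{i}$.

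Because $\odot$ intertwines this diagonal action, $w(x\odot y)=w(x)\odot w(y)$, and both $\Psi^{\mu_{1}}_{\omega_{1}}$ and $\Phi$ lie in $\calE$ and are therefore $W_{\Sigma}$-invariant, the product $T\Phi=\Psi^{\mu_{1}}_{\omega_{1}}\odot\Phi$ is again $W_{\Sigma}$-invariant. A one-line weight check shows that the exponent shifts supplied by $T$ carry each coset into $2P$: $\chi_{1}+\omega_{1}=2\omega_{1}$, $\chi_{2}+(\omega_{2}-\omega_{1})=2\omega_{2}$, and $\chi_{3}-\omega_{2}=0$. Hence $T\Phi\in(\bbC[2P]\otimes\bbC^{3})^{S_{3}}$. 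Running the identical argument with $T^{-1}=\mathrm{diag}(\Psi^{\mu_{1}}_{\omega_{2}})$, whose diagonal is equally $W_{\Sigma}$-invariant and whose exponents return $2P$ to the cosets $\chi_{i}+2P$, shows that $T^{-1}$ maps the target back into $\calE$. Since $T$ and $T^{-1}$ are mutually inverse on the ambient $\bbC[P_{\Sigma}]\otimes\bbC^{3}$, their restrictions are mutually inverse bijections, so $T$ is an isomorphism; it is $\bbC[2P]^{S_{3}}$-linear because a diagonal matrix of scalar Laurent polynomials commutes with scalar multiplication.

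Finally, for unitarity I would use that each $T_{ii}$ is a single exponential $e^{\mu}$, hence of modulus one on the compact torus $A$. Therefore $\sum_{i}|(T\Phi)_{i}|^{2}=\sum_{i}|T_{ii}|^{2}|\phi_{i}|^{2}=\sum_{i}|\phi_{i}|^{2}$ pointwise on $A$, so the integrands defining $\langle T\Phi,T\Psi\rangle$ and $\langle\Phi,\Psi\rangle$ coincide and the two inner products agree. I expect the only genuinely delicate point to be the first step: one must correctly identify $M_{1/2}$ with the $2$-torsion $A_{1/2}[2]$ and verify that its characters partition $\bbC[P_{\Sigma}]$ into precisely the cosets $e^{\chi_{i}}\bbC[2P]$, since it is this fact—combined with the exponents of $T$ equalling $\chi_{i}$ modulo $2P$—that makes $T$ land in $\bbC[2P]$ rather than merely in $\bbC[P_{\Sigma}]$. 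Everything else is formal.
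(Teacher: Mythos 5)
Your proof is correct and follows the same route as the paper's (much terser) argument: $T$ is $S_{3}$-invariant, it absorbs the $M_{1/2}$-action because its exponents are congruent mod $2P$ to the $M_{1/2}$-characters $\omega_{1},\omega_{1}+\omega_{2},\omega_{2}$ carried by $e_{1},e_{2},e_{3}$, and $\bbC[P]^{M_{1/2}}=\bbC[2P]$. Your added explicit verifications of bijectivity via $T^{-1}=\diag(\Psi^{\mu_{1}}_{\omega_{2}})$ and of unitarity via $T^{*}T=I$ on the compact torus are correct and simply spell out what the paper leaves implicit.
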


\begin{proof}
The matrix $T$ is $S_{3}$-invariant and it absorbs the $M_{1/2}$-action, i.e~ $T\Phi\in(\bbC[P]^{M_{1/2}}\otimes\bbC^{3})^{S_{3}}$. At the same time $\bbC[P]^{M_{1/2}}=\bbC[2P]$ from which the claim follows.
\end{proof}

Note that the generators of the $\calE^{0}$-modules $\calE_{k}$ from Proposition \ref{prop:gen} are mapped to
$$\Phi^{\mu_{1}}_{\varpi_{1}}=
\begin{pmatrix}
e^{2\omega_{1}}\\e^{2\omega_{2}-2\omega_{1}}\\e^{-2\omega_{2}}
\end{pmatrix},\quad
\Phi^{\mu_{1}}_{\varpi_{1}+\varpi_{2}}=\frac{1}{2}
\begin{pmatrix}
e^{2\omega_{1}-2\omega_{2}}+e^{2\omega_{2}}\\e^{-2\omega_{1}}+e^{2\omega_{2}}\\e^{-2\omega_{1}}+e^{2\omega_{1}-2\omega_{2}}
\end{pmatrix},\quad
\Phi^{\mu_{1}}_{\varpi_{2}}=
\begin{pmatrix}
1\\1\\1
\end{pmatrix}$$
by multiplication with $T$, which are in turn generators of the $\bbC[2P]^{S_{3}}$-module $(\bbC[2P]\otimes\bbC^{3})^{S_{3}}$ by Remark \ref{remark:other gens}.

\begin{corollary}
The $\calE^{0}$ modules $\calE$ and $\calE_{k},k=1/2,1,2$ are all the same. 
\end{corollary}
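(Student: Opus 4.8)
The plan is to deduce this purely formally from the two structural results that precede it, since the substantive work---exhibiting explicit free generators on the one hand, and constructing the twisting isomorphism $T$ on the other---has already been carried out. First I would record the containment $\calE_{k}\subseteq\calE$ for each $k=1/2,1,2$, which holds by construction, as each $\calE_{k}$ was defined as the image of $E^{\mu_{1}}_{A_{k}}$ inside $\calE$. By Proposition \ref{prop:gen} the three modules $\calE_{1/2},\calE_{1},\calE_{2}$ already coincide, so it suffices to prove $\calE=\calE_{k}$ for a single value of $k$, after which the full equality follows.

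Next I would invoke Lemma \ref{lemma:T}: multiplication by $T$ is an isomorphism of $\calE^{0}=\bbC[2P]^{S_{3}}$-modules from $\calE$ onto $(\bbC[2P]\otimes\bbC^{3})^{S_{3}}$. Since $T$ is an $\calE^{0}$-module map, it carries submodules to submodules, and I would compute the image $T(\calE_{k})$ by tracking generators. By Proposition \ref{prop:gen} the module $\calE_{k}$ is freely generated over $\calE^{0}$ by $\Psi^{\mu_{1}}_{\omega_{1}},\Psi^{\mu_{1}}_{\omega_{1}+\omega_{2}},\Psi^{\mu_{1}}_{\omega_{2}}$, and the displayed computation immediately preceding the corollary shows that $T$ sends these three functions to $\Phi^{\mu_{1}}_{\varpi_{1}},\Phi^{\mu_{1}}_{\varpi_{1}+\varpi_{2}},\Phi^{\mu_{1}}_{\varpi_{2}}$. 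By Remark \ref{remark:other gens} the latter are precisely a set of free generators of the full module $(\bbC[2P]\otimes\bbC^{3})^{S_{3}}$ over $\bbC[2P]^{S_{3}}$.

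Combining these observations, $T(\calE_{k})$ is the $\bbC[2P]^{S_{3}}$-submodule generated by these images, which is all of $(\bbC[2P]\otimes\bbC^{3})^{S_{3}}=T(\calE)$. Applying the inverse $T^{-1}$, which exists since $T$ is a bijection, yields $\calE_{k}=\calE$; together with Proposition \ref{prop:gen} this shows that all four modules $\calE,\calE_{1/2},\calE_{1},\calE_{2}$ agree.

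I do not anticipate a genuine obstacle: the corollary is a bookkeeping consequence of the isomorphism $T$ and the matching of generators. The only point requiring mild care is to confirm that $T$ respects the $\calE^{0}$-module structure, so that ``generators map to generators'' genuinely forces equality of the generated submodules rather than merely inclusion of their spans---but this is exactly the content of Lemma \ref{lemma:T}, so no new input is needed.
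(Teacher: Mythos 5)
Your argument is correct and is essentially the paper's own: the paper proves this corollary exactly by noting that $T$ carries the generators of $\calE_{k}$ from Proposition \ref{prop:gen} to the alternative generators of $(\bbC[2P]\otimes\bbC^{3})^{S_{3}}$ supplied by Remark \ref{remark:other gens}, and then applying the module isomorphism of Lemma \ref{lemma:T}. No discrepancy to report.
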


The spherical functions are determined as simultaneous eigenfunctions of a commutative subquotient of $U(\lau_{\bbC})$ that acts as an algebra of differential operators. To pass from the elements in the universal enveloping algebra to actual differential operators one uses the radial part map of Harish-Chandra, see e.g.~\cite{MR683007}. The context of loc.cit.~is of non-compact Riemann symmetric pairs, but the all calculations are of an algebraic nature. Shimeno \cite{MR3775397} has calculated the generators of this algebra of differential operators in the non-compact context for the cases that we are looking at. The interpretation in the compact case gives the generators
\begin{multline}
\calD_{1}=\begin{pmatrix}
\del_{\xi_{1}}&0&0\\
0&\del_{s_{\alpha_{1}}\xi_{1}}&0\\
0&0&\del_{s_{\alpha_{3}}\xi_{1}}
\end{pmatrix}\\
+k\cdot\begin{pmatrix}
\frac{e^{\alpha_{1}}+e^{-\alpha_{1}}}{e^{\alpha_{1}}-e^{-\alpha_{1}}}+\frac{e^{\alpha_{3}}+e^{-\alpha_{3}}}{e^{\alpha_{3}}-e^{-\alpha_{3}}} & 
-\frac{2}{e^{\alpha_{1}}-e^{-\alpha_{1}}}& 
-\frac{2}{e^{\alpha_{3}}-e^{-\alpha_{3}}}\\
\frac{2}{e^{\alpha_{1}}-e^{-\alpha_{1}}}&
-\frac{e^{\alpha_{1}}+e^{-\alpha_{1}}}{e^{\alpha_{1}}-e^{-\alpha_{1}}}+\frac{e^{\alpha_{3}}+e^{-\alpha_{3}}}{e^{\alpha_{3}}-e^{-\alpha_{3}}}& 
-\frac{2}{e^{\alpha_{2}}-e^{-\alpha_{2}}}\\
\frac{2}{e^{\alpha_{3}}-e^{-\alpha_{3}}}&
\frac{2}{e^{\alpha_{2}}-e^{-\alpha_{2}}}&
 -\frac{e^{\alpha_{3}}+e^{-\alpha_{3}}}{e^{\alpha_{3}}-e^{-\alpha_{3}}}-\frac{e^{\alpha_{2}}+e^{-\alpha_{2}}}{e^{\alpha_{2}}-e^{-\alpha_{2}}}
\end{pmatrix}
\end{multline}
and 
\begin{multline}
\calD_{2}=\del_{\xi_{1}}\del_{\xi_{2}}+\del_{\xi_{1}}\del_{\xi_{3}}+\del_{\xi_{2}}\del_{\xi_{3}}-\sum_{i<j}k\frac{1+e^{-2\alpha_{ij}}}{1-e^{-2\alpha_{ij}}}\del_{e_{i}-e_{j}}+\\
2k\cdot\begin{pmatrix}
\frac{2}{(e^{\alpha_{1}}-e^{-\alpha_{1}})^{2}}+\frac{2}{(e^{\alpha_{3}}-e^{-\alpha_{3}})^{2}} & 
-\frac{e^{\alpha_{1}}+e^{-\alpha_{1}}}{(e^{\alpha_{1}}-e^{-\alpha_{1}})^{2}}& 
-\frac{e^{\alpha_{3}}+e^{-\alpha_{3}}}{(e^{\alpha_{3}}-e^{-\alpha_{3}})^{2}}\\
-\frac{e^{\alpha_{1}}+e^{-\alpha_{1}}}{(e^{\alpha_{1}}-e^{-\alpha_{1}})^{2}}&
\frac{2}{(e^{\alpha_{1}}-e^{-\alpha_{1}})^{2}}+\frac{2}{(e^{\alpha_{3}}-e^{-\alpha_{3}})^{2}}& 
-\frac{e^{\alpha_{2}}-e^{-\alpha_{2}}}{(e^{\alpha_{2}}-e^{-\alpha_{2}})^{2}}\\
-\frac{e^{\alpha_{3}}+e^{-\alpha_{3}}}{(e^{\alpha_{3}}-e^{-\alpha_{3}})^{2}}&
-\frac{e^{\alpha_{2}}+e^{-\alpha_{2}}}{(e^{\alpha_{2}}-e^{-\alpha_{2}})^{2}}&
\frac{2}{(e^{\alpha_{3}}-e^{-\alpha_{3}})^{2}}+\frac{2}{(e^{\alpha_{2}}-e^{-\alpha_{2}})^{2}}
\end{pmatrix}.
\end{multline}

\begin{proposition}
We have $$T\circ\beta(\gamma(\xi_{1}))\circ T^{-1}=\calD_{1},$$
$$T\circ\beta(\gamma(\xi_{1}\xi_{2}+\xi_{2}\xi_{3}+\xi_{1}\xi_{3}))\circ T^{-1}=\calD_{2}-\calD_{1}-4k^2-\frac{1}{3}.$$
\end{proposition}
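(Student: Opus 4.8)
The plan is to prove both identities by direct entrywise comparison, since all four operators are available explicitly: the left-hand sides $\beta(\gamma(\xi_1))$ and $\beta(\gamma(\xi_1\xi_2+\xi_1\xi_3+\xi_2\xi_3))$ were computed in Example \ref{example:shimenoPrep}, and the right-hand sides $\calD_1,\calD_2$ are the Shimeno operators written out above. Because $T$ is a diagonal matrix of Laurent monomials, conjugation acts transparently: it fixes every coefficient that is multiplication by a function sitting on the diagonal, multiplies the $(i,j)$-coefficient by $e^{\pm(\tau_i-\tau_j)}$ where $\tau_1=\omega_1,\ \tau_2=\omega_2-\omega_1,\ \tau_3=-\omega_2$, and shifts each directional derivative $\del_\xi$ in slot $i$ by the scalar $\langle\tau_i,\xi\rangle$. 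The two facts driving the whole computation are the identity $\frac{1}{1-e^{-2\alpha}}=\frac{e^{\alpha}}{e^{\alpha}-e^{-\alpha}}$ and the observation that $\tau_i-\tau_j$ is, up to sign, the root $\alpha_{ij}$; together they turn the Cherednik-type off-diagonal coefficients $\frac{1}{1-e^{-2\alpha_{ij}}}$ into Shimeno's symmetric coefficients $\frac{1}{e^{\alpha_{ij}}-e^{-\alpha_{ij}}}$.

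For the first identity I would start from the diagonal of $\beta(\gamma(\xi_1))$. The diagonal matrix-function entries $\frac{e^{\alpha}+e^{-\alpha}}{e^{\alpha}-e^{-\alpha}}$ commute with the diagonal $T$ and are unchanged, while the leading derivatives in the three slots are $\del_{u\xi_1}$ for $u\in W^{I}=\{e,s_{12},s_{23}s_{12}\}$, that is $\del_{\xi_1},\del_{s_{\alpha_1}\xi_1},\del_{s_{\alpha_3}\xi_1}$, exactly as in $\calD_1$. Applying the off-diagonal rule above to each of the six off-diagonal entries sends them to the corresponding entries of $\calD_1$, and after bookkeeping the scalar shifts $\langle\tau_i,\xi\rangle$ one reads off $T\circ\beta(\gamma(\xi_1))\circ T^{-1}=\calD_1$.

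For the second identity I would organize $\beta(\gamma(\xi_1\xi_2+\xi_1\xi_3+\xi_2\xi_3))$ as in Example \ref{example:shimenoPrep} into its scalar second-order part $L=\del_{\xi_1}\del_{\xi_2}+\del_{\xi_1}\del_{\xi_3}+\del_{\xi_2}\del_{\xi_3}$, its scalar first-order part $-k\sum_{\alpha\in\Sigma_+}\frac{e^{\alpha}+e^{-\alpha}}{e^{\alpha}-e^{-\alpha}}\del_{\alpha'}$, the constant $-4k^2$, and the matrix part $4kM$ (the last displayed matrix there). The second-order and first-order scalar parts already agree with the corresponding parts of $\calD_2$ (using $\frac{1+e^{-2\alpha}}{1-e^{-2\alpha}}=\frac{e^{\alpha}+e^{-\alpha}}{e^{\alpha}-e^{-\alpha}}$ and $\alpha'_{ij}=e_i-e_j$). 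The new phenomenon is that conjugating the second-order symbol $L$ by $T$ produces, in slot $i$, extra first-order terms $\sum_{\{a,b\}}\bigl(\langle\tau_i,\xi_a\rangle\del_{\xi_b}+\langle\tau_i,\xi_b\rangle\del_{\xi_a}\bigr)$ together with the constant $\sum_{\{a,b\}}\langle\tau_i,\xi_a\rangle\langle\tau_i,\xi_b\rangle$. Using $\xi_1+\xi_2+\xi_3=0$ the first-order terms collapse in slot $i$ to the single derivative $\del_{\xi_i}$, so across the three slots they reassemble into the first-order part of $\calD_1$; this is the source of the summand $-\calD_1$. The constants evaluate to $-\frac13$ in every slot, producing the scalar $-\frac13$, while the pre-existing $-4k^2$ is untouched by conjugation. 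Conjugating $4kM$ and the first-order scalar part then supplies the $k$-linear coefficients of $\calD_2$ together with the $k$-matrix that $-\calD_1$ removes, and collecting everything yields $\calD_2-\calD_1-4k^2-\frac13$.

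The main obstacle is the zero-order matrix bookkeeping in the second identity: one must verify that $T(4kM)T^{-1}$, together with the function-valued zero-order terms generated when $T$ differentiates the coefficients $\frac{e^{\alpha}+e^{-\alpha}}{e^{\alpha}-e^{-\alpha}}\del_{\alpha'}$, combine precisely into $2kN-k\,(\text{matrix of }\calD_1)$, where $2kN$ is the $k$-matrix of $\calD_2$. This is the analogue of the tedious computation already flagged in Example \ref{example:shimenoPrep}, and it relies on the reflection relations $s_{12}s_{13}p=s_{13}p,\ s_{13}s_{12}p=s_{12}p,\ s_{23}s_{12}p=s_{13}p,\ s_{23}s_{13}p=s_{12}p$ for $p\in\bbC[P]^{W'}$ recorded there. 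A useful independent check is that conjugation by $T$ preserves eigenvalues, so that on each $TP_I(\lambda,k)$ the operator $T\circ\beta(\gamma(q))\circ T^{-1}$ acts by $q(\widetilde\lambda)$ by the eigenvalue formula of Section \ref{s:ortho}; comparing with the eigenvalues of the Shimeno operators pins down the scalar corrections $-4k^2-\frac13$ independently of the matrix expansion.
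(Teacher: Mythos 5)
Your proposal follows essentially the same route as the paper's proof: both sides are taken from Example \ref{example:shimenoPrep} and from Shimeno's operators, and the verification is a direct entrywise conjugation by the diagonal matrix $T$, organized around exactly the two identities the paper singles out — the conjugation of the second-order symbol producing the extra first-order term and the constant $-\tfrac13$ (your ``collapse to $\del_{\xi_i}$ in slot $i$'' computation), and the partial-fraction identity converting $\tfrac{e^{-\alpha}}{(1-e^{-2\alpha})^{2}}$ into the symmetric coefficients of $\calD_{2}$ minus those of $\calD_{1}$. Your eigenvalue cross-check for the scalar corrections is a sensible addition not present in the paper, but the substance of the argument is the same.
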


\begin{proof}
The differential operators $\beta(\gamma(\xi_{1}))$ and $\beta(\gamma(\xi_{1}\xi_{2}+\xi_{2}\xi_{3}+\xi_{1}\xi_{3}))$ have been calculated in Example \ref{example:shimenoPrep}. The conjugation with $T$ is a tedious calculation that is based, among other things, on the identities
\begin{multline}
e^{\omega_{1}}\circ(\del_{1}'\del_{2}'+\del_{1}'\del_{3}'+\del_{2}'\del_{3}')\circ e^{-\omega_{1}}
=\del_{1}'\del_{2}'+\del_{1}'\del_{3}'+\del_{2}'\del_{3}'+\frac{2}{3}\del_{1}'-\frac{1}{3}\del_{2}'-\frac{1}{3}\del_{3}'-\frac{1}{3}\\
=\del_{1}'\del_{2}'+\del_{1}'\del_{3}'+\del_{2}'\del_{3}'+\del_{1}'-\frac{1}{3},
\end{multline}
and
$$-4\frac{e^{-\alpha}}{(1-e^{-2\alpha})^{2}}=-2\frac{e^{\alpha}+e^{-\alpha}}{(e^{\alpha}-e^{-\alpha})^{2}}-\frac{2}{e^{\alpha}-e^{-\alpha}}$$
and is left for the reader.
\end{proof}

\begin{corollary}
The spherical functions of type $\mu_{1}$ for the compact symmetric pairs $(U,K)$ in this section can be identified with the Jacobi polynomials from Example \ref{example:shimenoPrep}, up to multiplication with the function $T$ from Lemma \ref{lemma:T} and up to scaling.
\end{corollary}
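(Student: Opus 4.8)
The plan is to play off the two independent characterizations of these functions as simultaneous eigenfunctions of commutative algebras of differential operators, transporting one family onto the other through the intertwiner $T$ of Lemma \ref{lemma:T}. On the Jacobi side, the vector-valued polynomials $P_{I}(\lambda,k)$ attached to $R=2\Sigma$ and $I=\{s\}$ are, by the final Lemma of Section \ref{s:vvJacobi}, determined uniquely up to scaling as simultaneous eigenfunctions of the commutative algebra $\bbD(I,k)$ generated by $\beta(\gamma(\xi_{1}))$ and $\beta(\gamma(\xi_{1}\xi_{2}+\xi_{2}\xi_{3}+\xi_{1}\xi_{3}))$. On the spherical side, the restricted functions $\Psi^{\mu_{1}}_{\lambda}(k)$ form an orthogonal basis of $\calE$ that, by Peter--Weyl together with the Harish-Chandra radial-part formalism and Shimeno's computation, is likewise determined uniquely up to scaling as a basis of simultaneous eigenfunctions of the algebra generated by $\calD_{1}$ and $\calD_{2}$.

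First I would note that, since $T$ is a \emph{unitary} isomorphism of $\bbC[2P]^{S_{3}}$-modules, conjugation by $T$ is an algebra isomorphism between the two algebras of differential operators in question. Next I would invoke the preceding Proposition, which gives
\[
T\circ\beta(\gamma(\xi_{1}))\circ T^{-1}=\calD_{1},\qquad
T\circ\beta(\gamma(\xi_{1}\xi_{2}+\xi_{2}\xi_{3}+\xi_{1}\xi_{3}))\circ T^{-1}=\calD_{2}-\calD_{1}-4k^{2}-\tfrac{1}{3}.
\]
Because $\{\calD_{1},\calD_{2}\}$ and $\{\calD_{1},\,\calD_{2}-\calD_{1}-4k^{2}-\tfrac{1}{3}\}$ generate the same commutative algebra, these identities show that conjugation by $T$ carries $\bbD(I,k)$ isomorphically onto the Shimeno algebra.

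With the intertwining in hand the identification is essentially formal. Conjugation by $T$ sends a simultaneous eigenfunction of $\bbD(I,k)$ to a simultaneous eigenfunction of $\{\calD_{1},\calD_{2}\}$, the joint eigenvalue transforming by the affine rule dictated by the displayed formula. Since each family is its algebra's unique simultaneous-eigenfunction basis up to scaling, I would conclude that for every label there are a scalar $c_{\lambda}$ and a label $\nu$ with $T\cdot\Psi^{\mu_{1}}_{\lambda}(k)=c_{\lambda}\,P_{I}(\nu,k)$, and conversely. To fix the resulting bijection on labels I would compare module generators: Proposition \ref{prop:gen} produces the generators $\Psi^{\mu_{1}}_{\omega_{1}},\Psi^{\mu_{1}}_{\omega_{1}+\omega_{2}},\Psi^{\mu_{1}}_{\omega_{2}}$ of $\calE$, whose images under $T$ are exactly the alternative Steinberg generators of $(\bbC[2P]\otimes\bbC^{3})^{S_{3}}$ recorded in Remark \ref{remark:other gens}; the remaining labels are then matched through the eigenvalue data $q(\widetilde{\lambda})$.

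I expect the main difficulty to be bookkeeping rather than conceptual. The substantive analytic work---the explicit conjugation identities---has already been carried out in Example \ref{example:shimenoPrep} and the preceding Proposition, so what remains is to check that the two uniqueness statements really concern the same operators after the affine shift $\calD_{2}\mapsto\calD_{2}-\calD_{1}-4k^{2}-\tfrac{1}{3}$, and that the eigenvalue separation on both sides is compatible enough for the label bijection $\lambda\leftrightarrow\nu$ to be well defined with scaling the only remaining ambiguity.
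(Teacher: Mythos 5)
Your proposal is correct and follows essentially the same route as the paper: the paper's own (very terse) proof likewise identifies the underlying spaces via the module generators and identifies the two commutative algebras via the conjugation identities $T\circ\beta(\gamma(\xi_{1}))\circ T^{-1}=\calD_{1}$ and $T\circ\beta(\gamma(\xi_{1}\xi_{2}+\xi_{2}\xi_{3}+\xi_{1}\xi_{3}))\circ T^{-1}=\calD_{2}-\calD_{1}-4k^{2}-\tfrac{1}{3}$, then appeals to uniqueness of simultaneous eigenfunctions up to scaling. Your write-up merely makes explicit the steps (same generated algebra after the affine shift, label matching via generators and eigenvalues) that the paper leaves implicit.
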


\begin{proof}
The spaces spanned by the orthogonal families of functions can be identifies and so can the commutative algebras that determine the individual members of the familes.
\end{proof}

The spherical functions are normalized to be the identity in the unit element. The norms of the spherical functions can be expressed in terms of dimensions of the underlying representation spaces, which can be expressed by the Weyl dimension formulas for irreducible $U$- and $K$-modules cf.~\cite[\S2.1]{MR4053617}. This suggests that the normalizations of the Jacobi polynomials in this paper have interesting $k$-dependencies.  

\begin{remark}
The compact symmetric pair $(U,K)$ with Dynkin types $E_{6}$ and $F_{4}$ respectively, also has a restricted root system of Dynkin type $A_{2}$. The first fundamental representation of $K$ is 52-dimensional and induces multiplicity free to $E_{6}$, see \cite[\S 9, Table 2 (B10)]{PvP}. However, the restriction of this representation to $M$, whose Lie algebra is of Dynkin type $D_{4}$, is the sum of four irreducible $M$-modules. It follows that the spherical functions for this $K$-type do not fit into this examples of families. It could be that there is a irreducible $K$-module that decomposes into three different irreducible $M$-modules, we don't know if this is the case.
\end{remark}

\section{Application II}\label{s:AppII}
To describe the $W$-invariant polynomials as genuine polynomials we follow \cite{MR447979}.
The $W$-invariant Laurent polynomials on $A$ constitute the polynomial algebra $\bbC[P]^{W}=\bbC[\chi_{1},\ldots,\chi_{n}]$, where $n=\rank R$. The image of the map $\chi:T\to\bbC^{n}:t\mapsto(\chi_{1}(t),\ldots,\chi_{n}(t))$ is denoted by $\Omega=\chi(A)$ and it is compact and contained in a totally real subspace $\bbR^{n}\subset\bbC^{n}$. For $p,q\in\bbC[x_{1},\ldots,x_{n}]$ have
$$\int_{A}\overline{p(\chi(t))}q(\chi(t))\delta_{k}(t)dt=\int_{\Omega}\overline{p(x)}q(x)w_{k}(x)dx$$
for some specific weight function $w_{k}$ for which $w_{k}(\chi)$ is the product of $\delta_{k}$ and the absolute value of the Jacobian of $\chi$. The zero set of the latter is in general contained in the zero set of the former.

We proceed to describe the space $\bbC[P]^{W_{I}}$ also as a space of genuine polynomials. Recall that we have fixed a total ordering on $W^{I}$ in which $u$ comes before $v$ if $\ell(u)\le\ell(v)$. We use this to identify $\bbC[W/W_{I}]=\bbC^{|W^{I}|}$.

The isomorphism $\bbC[P]^{W_{I}}\to\bbC[P]^{W}\otimes\bbC^{|W^{I}|}$ defined by $\sum_{v\in W^{I}}f_{v}\phi_{v}\mapsto(f_{v},v\in W^{I})$ induces an isomorphism $\left(\bbC[P]\otimes\bbC[W/W_{I}]\right)^{W}\to\bbC[P]^{W}\otimes\bbC^{|W^{I}|}$ given by
$$\sum_{v\in W^{I}}f_{v}\Gamma(\phi_{v})\mapsto(f_{v},v\in W^{I}).$$
Let $\Phi_{I}$ be $|W^{I}|\times|W^{I}|$ matrix whose columns are $\Gamma(\phi_{v}),v\in W^{I}$.
Using the identification $\bbC[W/W_{I}]=\bbC^{|W^{I}|}$, we see that the corresponding map
$$(\bbC[P]\otimes\bbC^{|W^{I}|})^{W}\to\bbC[P]^{W}\otimes\bbC^{|W^{I}|}$$
is given by point wise multiplication with the inverse of $\Phi_{I}$. Note that
 $$\det\Phi_{I}=\prod_{\alpha>0}(e^{\alpha/2}-e^{-\alpha/2})^{n_{\alpha}}$$
where $n_{\alpha}$ is the number of pairs in $W/W_{I}$ that is interchanged by $s_{\alpha}$ upon left multiplication, see \cite[Lemma 2.9]{MR372897}. Let $\bbM_{I}=\mathrm{Mat}(\bbC,|W^{I}|\times|W^{I}|)$ on which we have the usual Hermitean adjoint which we denote by $C\mapsto C^{*}$. 

Let $\calW_{I}\in\bbC[x_{1},\ldots,x_{n}]\otimes\bbM_{I}$ be defined by $\calW_{I}(\chi)=\Phi_{0}^{*}\Phi_{0}$.
Then $\calW_{I}w_{k}$ is a matrix weight, i.e.~for $Q_{1},Q_{2}\in\bbC[x_{1},\ldots,x_{n}]\otimes\bbC^{|W^{I}|}$ the pairing
\begin{equation}\label{eq:weight}
\langle Q_{1},Q_{2}\rangle_{I,k}=\int_{\Omega} Q_{1}(x)^{*}\calW_{I}(x) Q_{2}(x)w_{k}(x)dx
\end{equation}
defines an inner product. By construction the map
$$\left((\bbC[P]\otimes\bbC^{|W^{I}|})^{W},(\cdot,\cdot)_{I,k}\right)\to \left(\bbC[x_{1},\ldots,x_{n}]\otimes\bbC^{|W^{I}|},\langle\cdot,\cdot\rangle_{I,k}\right),$$
given by multiplication with $\Phi^{-1}_{I}$, is a unitary isomorphism.

Using the identifications from above we define $\calP_{I}(\lambda,k)\in\bbC[x_{1},\ldots,x_{n}]\otimes\bbC^{|W^{I}|}$ by
$$\calP_{I}(\lambda,k)(\chi)=\Phi_{I}^{-1}P_{I}(\lambda,k).$$

The family $(\calP_{I}(\lambda,k),\lambda\in P^{+}_{I})$ is an orthogonal basis of $\bbC[x_{1},\ldots,x_{n}]\otimes\bbC^{|W^{I}|}$ with respect to $\langle\cdot,\cdot\rangle_{I,k}$.
Denote by $\calD(I,k)=\{\Phi_{I}^{-1}\circ D\circ\Phi_{I}\mid D\in\bbD(I,k)\}$. Then $\calD(I,k)$ is an algebra of differential operators acting on $\bbC[x_{1},\ldots,x_{n}]\otimes\bbC^{|W^{I}|}$. The $\calP_{I}(\lambda,k),\lambda\in P^{+}_{I}$ are simultaneous eigenfunctions and are in fact separated as such, by construction.
Since $\calD(I,k)$ acts on $\bbC[x_{1},\ldots,x_{n}]\otimes\bbC^{|W^{I}|}$, we see that its coefficients are $\bbM_{I}[x_{1},\ldots,x_{n}]$-valued.

Let $\calM_{I}(\sigma)\in\bbM_{I}[x_{1},\ldots,x_{n}]$ be the $\bbM_{I}$-valued polynomial whose columns are $\calP_{I}(v^{-1}(\lambda_{v}+\sigma),k),v\in W^{I}$. Then $(\calM_{I}(\sigma,k),\sigma\in P^{+})$ is a family of matrix-valued orthogonal polynomials with respect to \eqref{eq:weight}, now interpreted as an $\bbM_{I}$-valued inner product. These polynomials are uniquely determined up to scaling by $\bbM$ from the right, as simultaneous eigenfunctions of the commutative algebra $\calD(I,k)$, the characters now taking values in subalgebra of the diagonal matrices in $\bbM_{I}$ which act on the polynomials on the right.

\begin{example}
\begin{itemize}
\item If $\rank(R)=1$ and $I=S$, then we obtain for $\calP_{I}(\lambda,k)$ the classical Jacobi polynomials in a single variable.
\item If $R=A_{2}$ and $I=\{s_{2}\}$, then we obtain the the family of polynomials described in \cite[\S8.1]{MR4053617}. This is only the geometric case, i.e. $k=1$, but our theory allows that we vary the parameter $k$. In this case it is known that the matrix weight is indecomposable, i.e.~it does not reduce to smaller blocks.
\end{itemize}
\end{example}

\textbf{Acknowledgment} I would like to thank Gert Heckman and Philip Schlösser for useful remarks on earlier versions of this manuscript.

\bibliography{bibfileMVP}{}
\bibliographystyle{plain}

\end{document}